\theoremstyle{definition}
\newtheorem{theorem}{Theorem}[section]
\newtheorem{proposition}{Proposition}[section]
\newtheorem{lemma}{Lemma}[section]
\newtheorem{corollary}{Corollary}[section]
\newtheorem{example}{Example}[section]
\newtheorem{definition}{Definition}[section]
\newtheorem{remark}{Remark}[section]
\numberwithin{equation}{section}
\begin{document}
\title{Markov semigroups on unitary duals generated by quantized characters}
\author[R. Sato]{Ryosuke SATO}
\address{Graduate School of Mathematics, Nagoya University, Chikusaku, Nagoya 464-8602, Japan}
\email{d19001r@math.nagoya-u.ac.jp}
\maketitle

\begin{abstract}
In this paper, we study Markov dynamics on unitary duals of compact quantum groups. We construct such dynamics from characters of quantum groups. Then we show that the dynamics have generators, and we give an explicit formula of the generators using the representation theory. Moreover, we construct Markov dynamics on the unitary dual of an inductive limit of compact quantum groups.
\end{abstract}

\allowdisplaybreaks{
\section{Introduction}
The purpose of this paper can be explained in two ways. The first is to investigate the relationship between stochastic processes on unitary duals of (quantum) groups and representation theory. The second is to study dual objects of stochastic processes on (quantum) groups.

If a continuous function on a topological group $G$ is positive-definite, central, and normalized, it is called a character of $G$. Then the set of characters is convex, and the set $\widehat G$ of extreme points is called the unitary dual of $G$. If $G$ is a compact group, Fourier analysis gives a one-to-one correspondence between the set of characters of $G$ and the set of probability measures on $\widehat G$. Moreover, we can extend this correspondence to inductive limit groups $G_\infty=\varinjlim_NG_N$ of compact groups $G_N$. For $G_\infty$ there also exists a one-to-one correspondence between the set of probability measures on $\widehat{G_\infty}$ and the set of so-called \emph{central} probability measures on the \emph{chains} of elements in the $\widehat{G_N}$. In the case of the infinite-dimensional unitary group $U(\infty)=\varinjlim_NU(N)$, this is a one-to-one correspondence between the set of probability measures on $\widehat{U(\infty)}$ and the set of \emph{central} probability measures on the \emph{Gelfand--Tsetlin patterns}. Using these correspondences, we can study the characters of $U(\infty)$ in probabilistic ways. See \cite{Boyer83}, \cite{BorodinOlshanski}, \cite{OkounkovOlshanski} \cite{Olshanski03}, \cite{StraVoic:book}, \cite{VK82}, ... etc. Moreover, we can extend such a probabilistic approach for characters to the inductive limits of compact \emph{quantum} groups. See \cite{Sato1}, \cite{Sato3}. If we deal with the quantum unitary groups $U_q(N)$, then there exists a one-to-one correspondence between the set of characters of $U_q(\infty)$ and the set of \emph{$q^2$-central} probability measures on the Gelfand--Tsetlin patterns. See also \cite{Gorin12}.

It is known that Gelfand--Tsetlin patterns involve several statistical mechanical interpretations. See \cite{BC}, \cite{BP} and their references. Thus, a time-evolution of probability measures on the Gelfand--Tsetlin patterns (i.e., on $\widehat{U(\infty)}$) gives a time-evolution of such models in statistical mechanics. The purpose of this paper is giving a general idea to construct Markov dynamics on $\widehat{G_\infty}$ based on the representation theory of the inductive limit $G_\infty=\varinjlim_NG_N$ of compact quantum groups $G_N$.

For a compact quantum group $G$, the space $A(G)$ of matrix coefficients of finite-dimensional corepresentations of $G$ has a Hopf-algebra structure. There are several studies about Markov dynamics on $A(G)$. See \cite{CFK}, \cite{Franz06}, ... etc. The basic idea in those papers is constructing Markov dynamics using the comultiplication of $A(G)$ and \emph{conditionally positive} hermitian linear functionals on $A(G)$ (i.e., linear functionals such that positive on the kernel of the counit). In this paper, we will construct dual objects of such Markov dynamics.

Kac algebras are von Neumann algebras with an additional structure like Hopf algebras. Then they give one of the formulations of the duality of locally compact groups (see \cite{ES}). For a locally compact quantum group $G$, \emph{Woronowicz algebras} play a similar role in their duality theory (see \cite{MasudaNakagami}). In the duality theory, one of the Woronowicz algebras is given by a von Neumann algebra like $L^\infty(G)$, and another one is given by the group von Neumann algebra $W^*(G)$ that is generated by the left regular representation of $G$. Therefore, dual objects of Markov dynamics on $A(G) (\subset L^\infty(G))$ should be Markov dynamics on $W^*(G)$. In this paper, we construct (quantum) Markov dynamics on $W^*(G)$ using the comultiplication of $W^*(G)$ and conditionally positive hermitian linear functionals on $W^*(G)$. Moreover, some our dynamics induce Markov dynamics on $\widehat G$ by an isomorphism between the space $\ell^\infty(\widehat G)$ and the center $Z(W^*(G))$ of $W^*(G)$.

The first main result in the paper is Theorem \ref{thm:jun}. This theorem gives a construction of \emph{continuous-time} Markov dynamics on the unitary dual $\widehat G$ from characters of a compact quantum group $G$. Then we give an explicit formula of the generator of the dynamics. Moreover, for certain characters of $U(N)$ and $U_q(N)$, we show a determinantal formula of the generators in Theorem \ref{thm:kuma}, \ref{thm:q-pubg}. The construction here can be extended to inductive limit quantum groups of compact quantum groups. See \ref{sec:ind}. 

The second main result is Theorem \ref{thm:d-dynamics}. In Section \ref{sec:discrete-time}, we investigate \emph{discrete-time} dynamics on the set of Gelfand--Tsetlin patterns. Our construction is also based on the representation theory. Indeed, it is a generalization of algebraic construction of Markov dynamics due to Kuan \cite{Kuan18}. Then our dynamics are given by \emph{Toeplitz-like} transition probabilities due to Borodin and Ferrari \cite{BF}. Moreover, it is an algebraic construction of the dynamics in \cite{BG}.

The organization of this paper is the following: Section \ref{sec:notations} serves to prepare necessary notations. We review the basic facts about Woronowicz algebras in Section \ref{sec:wor}. Through the end of the section and Section \ref{sec:semigroups}, we construct Markov dynamics on the center of general Woronowicz algebra with a counit. In Section \ref{sec:cqg}, we apply the construction of Markov dynamics when a Woronowicz algebra is a group von Neumann algebra of compact quantum group. Then we prove Theorem \ref{thm:jun}. In Section \ref{sec:unitary} and \ref{sec:quantum_unitary}, we study the concrete examples from the unitary groups $U(N)$ and the quantum unitary groups $U_q(N)$. In Section \ref{sec:ind}, we construct Markov dynamics on unitary duals of inductive limit quantum groups of compact quantum groups. In Section \ref{sec:GT}, we show that the dynamics on $\widehat{U(\infty)}$ is a Feller semigroup. We also discuss discrete-time dynamics on Gelfand--Tsetlin patterns in Section \ref{sec:discrete-time}. In Appendix \ref{app:FA}, we review Fourier analysis of compact quantum groups to fix the necessary notations.

\section{Notations}\label{sec:notations}
Here we prepare necessary notations in the paper. 

Let $M$ be a $W^*$-algebra. We denote by $M_*, M_+$ the predual of $M$ and the set of positive elements in $M$, respectively. For a normal semi-finite weight $\varphi\colon M_+\to [0, \infty]$ we define the left ideal $\mathfrak{n}_\varphi:=\{x\in M\mid \varphi(x^*x)<\infty\}$. The triple $(\pi_\varphi, H_\varphi, \eta_\varphi)$ is called the GNS(Gelfand--Naimark--Segal)-triple associated with $\varphi$ if $(\pi_\varphi, H_\varphi)$ is a non-degenerate $*$-representation of $M$ and $\eta_\varphi\colon\mathfrak{n}_\varphi\to H_\varphi$ is a linear map with a dense range such that $\pi_\varphi(x)\eta_\varphi(y)=\eta_\varphi(xy)$ for any $x\in M$ and $y\in\mathfrak{n}_\varphi$, and $\varphi(y^*x)=\langle\eta_\varphi(x),\eta_\varphi(y)\rangle$ for any $x, y\in\mathfrak{n}_\varphi$. 

For a family $\{A_\alpha\}_{\alpha\in I}$ of $C^*$-algebras we define $\bigoplus_{\alpha\in I}A_\alpha\subset c_0\mathchar`-\bigoplus_{\alpha\in I}A_\alpha\subset\ell^\infty\mathchar`-\bigoplus_{\alpha\in I}A_\alpha$ by
\[\bigoplus_{\alpha\in I}A_\alpha:=\left\{(x_\alpha)_{\alpha\in I}\in\prod_{\alpha\in I}A_\alpha\, \middle|\, x_\alpha=0\text{ without finitely many }\alpha\in I\right\},\]
\[c_0\mathchar`-\bigoplus_{\alpha\in I}A_\alpha:=\left\{(x_\alpha)_{\alpha\in I}\in\prod_{\alpha\in I}A_\alpha\, \middle|\,\lim_{\alpha\in I}\|x_\alpha\|_{A_\alpha}=0\right\},\]
\[\ell^\infty\mathchar`-\bigoplus_{\alpha\in I}A_\alpha:=\left\{(x_\alpha)_{\alpha\in I}\in\prod_{\alpha\in I}A_\alpha\, \middle|\,\sup_{\alpha\in I}\|x_\alpha\|_{A_\alpha}<\infty\right\},\]
where $\lim_{\alpha\in I}\|x_\alpha\|=0$ if for any $\epsilon>0$ there exists a finite subset $I_\epsilon\subset I$ such that $\|x_\alpha\|<\epsilon$ for any $\alpha\in I\backslash I_\epsilon$. Then $c_0\mathchar`-\bigoplus_{\alpha\in I}A_\alpha$ is a $C^*$-algebra and $\ell^\infty\mathchar`-\bigoplus_{\alpha\in I}A_\alpha$ is a $W^*$-algebra.

For a vector space $V$ we define the flip map $\sigma_V\colon V\otimes V\to V\otimes V$ by $\sigma_M(\xi\otimes\eta)=\eta\otimes\xi$ for any $\xi, \eta\in V$. We will use the leg numbering notations. Namely, for a linear map $T\colon V\otimes V\to V\otimes V$ we define 
\[T_{12}:=T\otimes\mathrm{id}_V,\quad T_{23}:=\mathrm{id}_V\otimes T,\quad T_{13}:=\sigma_{V_{23}}T_{12}\sigma_{V_{23}}\] 
as linear maps on $V\otimes V\otimes V$.

For $\lambda=(\lambda_1\geq\cdots\geq\lambda_N)\in\mathbb{Z}^N$ the Schur (Laurent) polynomial $s_\lambda(z_1,\dots,z_N)$ is defined as
\[s_\lambda(z_1,\dots,z_N)=\frac{\det[z_i^{\lambda_j+N-j}]_{i, j=1}^N}{V_N(z_1,\dots, z_N)},\]
where $V_N(z_1,\dots, z_N):=\prod_{1\leq i<j\leq N}(z_i-z_j)$. Remark that $s_\lambda(z_1,\dots,z_N)$ becomes a Laurent polynomial and it is a polynomial if $\lambda_N\geq0$. 

For a topological space $X$ we denote by $\mathcal{B}(X)$ and $\mathcal{M}_p(X)$ its Borel $\sigma$-algebra and the set of Borel probability measures on $X$, respectively. For two topological spaces $X, Y$ a \emph{Markov kernel} $\Lambda$ from $Y$ to $X$ is a function $\Lambda\colon Y\times \mathcal{B}(X)\to[0,1]$ such that $\Lambda(y, \,\cdot\,)$ is in $\mathcal{M}_p(X)$ for any $y\in Y$ and $\Lambda(\,\cdot\,, A)$ is measurable on $Y$ for any $A\in\mathcal{B}(X)$. Remark that $\Lambda$ induces the mapping $m\in\mathcal{M}_p(Y)\mapsto m\Lambda\in \mathcal{M}_p(X)$ by $m\Lambda(A)=\int_Ym(dy)\Lambda(y,A)$ for any $A\in \mathcal{B}(X)$.

\section{Woronowicz algebras}\label{sec:wor}
Here we summarize necessary notions related to Woronowicz algebras due to Masuda and Nakagami \cite{MasudaNakagami}. A tuple $\mathcal{M}=(M, \delta, R, \tau, h)$ is a \emph{Woronowicz algebra} if
\begin{itemize}
\item $M$ is a $W^*$-algebra, 
\item $\delta\colon M\to M\bar\otimes M$ is a \emph{comultiplication}, i.e., it is a faithful normal unital $*$-homomorphism such that $(\delta\otimes\mathrm{id})\delta=(\mathrm{id}\otimes\delta)\delta$,
\item $R\colon M\to M$ is a \emph{unitary antipode}, i.e., it is a involutive $*$-anti-automorphism such that $\delta R=\sigma_M(R\otimes R)\delta$,
\item $\tau=(\tau_t)_{t\in\mathbb{R}}$ is a \emph{deformation automorphism group}, i.e., it is a one-parameter automorphism group on $M$ such that $(\tau_t\otimes\tau_t)\delta=\delta\tau_t$ and $R\tau_t=\tau_t R$ for any $t\in\mathbb{R}$,
\item $h$ is a \emph{left invariant Haar weight}, i.e., it is a $\tau$-invariant faithful normal semi-finite weight on $M$ such that $(\omega\otimes h)\delta=\omega(1)h$ for any $\omega\in M_*$, the modular automorphism groups of $h$ and $h R$ commute, and
$(\omega\otimes h)((1\otimes y^*)\delta(x))=(\omega\kappa\otimes h)(\delta(y^*)(1\otimes x))$
for any $x, y\in\mathfrak{n}_h$ and $\omega\in (M_*)_\tau$, where $\kappa:=\tau_{-\mathrm{i}/2}R$, and $(M_*)_\tau$ is the set of analytic elements with respect to the $\mathbb{R}$-action on $M_*$ given by $\omega\in M_*\mapsto \omega\tau_t\in M_*$.
\end{itemize}

\begin{remark}
If we do not assume the existence of Haar weight and assume the existence of $\sigma$-weakly dense $C^*$-algebra $A$ preserved by $\tau$, then $(M, A, \delta, R, \tau)$ is called a \emph{quantum group $W^*$-algebra}. We introduced it in the study of inductive limits of compact quantum groups in \cite{Sato3}. See also \cite{Yamagami}.
\end{remark}

Throughout the paper, we assume that $h$ is state, i.e., $h(1)=1$. Then $h$ is also right-invariant, i.e., $(h\otimes\omega)\delta=\omega(1)h$ for any $\omega\in M_*$. Moreover, $h$ automatically satisfies that 
$(h\otimes\omega)(\delta(y^*)x\otimes1)=(h\otimes\omega\kappa)(y^*\otimes1\delta(x))$ for any $x, y\in M$ and $\omega\in(M_*)_\tau$. See \cite{MasudaNakagami}.

Let $(\pi_h, H_h, \eta_h)$ be the GNS-triple associated with $h$. Then we define the so-called \emph{multiplicative unitary} $V\colon H_h\otimes H_h\to H_h\otimes H_h$ by $V\eta_h(x)\otimes \xi=(\pi_h\otimes\pi_h)(\delta(x))\eta_h(1)\otimes\xi$ for any $x\in M$ and $\xi\in H_h$. It is known that $V\in B(H_h)\bar\otimes M$. 

We define a multiplication on $M_*$ by $\omega_1*\omega_2:=(\omega_1\otimes\omega_2)\delta$ for any $\omega_1, \omega_2 \in M_*$. Then $M_*$ becomes a Banach algebra, and $(M_*)_\tau$ is a Banach subalgebra of $M_*$ with an involution defined by $\omega^\dagger(x):=\overline{\omega(\kappa(x)^*)}$ for $x\in M$ and $\omega\in(M_*)_\tau$. We obtain a representation $(\hat\pi_h, H_h)$ of $M_*$ such that $\hat\pi_h(\omega):=(\mathrm{id}\otimes\omega)(V)$ for any $\omega\in M_*$. Then the restriction to $(M_*)_\tau$ is a $*$-representation, i.e., $\hat\pi_h(\omega^\dagger)=\hat\pi_h(\omega)^*$ for any $\omega\in (M_*)_\tau$. We denote by $\hat M$ the von Neumann algebra generated by $\hat\pi_h(M_*)$. Then $V\in \hat M\bar\otimes M$. See \cite[Section 2]{MasudaNakagami} for more details.

It is known that $\hat M$ also has a Woronowicz algebra structure by the following way: We define a comultiplication $\hat\delta\colon \hat M\to \hat M\bar\otimes \hat M$, a unitary antipode $\hat R\colon \hat M\to \hat M$, and a deformation automorphism group $\hat\tau=(\hat\tau_t)_{t\in\mathbb{R}}$ by
\[\hat\delta(x)=V^*(1\otimes x)V,\quad \hat R(\hat\pi_h(\omega))=\hat\pi_h(\omega R),\quad \hat\tau_t(\hat\pi_h(\omega))=\hat\pi_h(\omega\tau_{-t})\]
for any $x\in\hat M$ and $\omega\in M_*$. Moreover, we obtain a left-invariant Haar weight $\hat h$ on $\hat M$ using the theory of Hilbert algebras. See \cite[Section 3]{MasudaNakagami} for more details. Then we call $\hat{\mathcal{M}}:=(\hat M, \hat\delta, \hat R, \hat\tau, \hat h)$ the dual Woronowicz algebra of $\mathcal{M}$.

For a $W^*$-algebra $N$ a \emph{right action} of $\mathcal{M}$ on $N$ is a faithful normal unital $*$-homomorphism $\alpha\colon N\to N\bar\otimes M$ satisfying that $(\alpha\otimes\mathrm{id})\alpha=(\mathrm{id}\otimes\delta)\alpha$. Then we define the fixed-point algebra 
\[N^\alpha:=\{x\in N\mid \alpha(x)=x\otimes1\}.\]
A normal linear functional $\varphi\in N_*$ is \emph{$\alpha$-invariant} if 
$(\varphi\otimes\mathrm{id})(\alpha(x))=\varphi(x)1$
for any $x\in N$.

We define the right action $\alpha_0$ of $\mathcal{M}$ on $\hat M$ by $\alpha_0(x)=V(x\otimes1)V^*$. Remark that $\alpha_0$ satisfies
\begin{equation}\label{eq:action}
(\hat\delta\otimes\mathrm{id})\alpha_0=\mathrm{Ad}V_{13}(\mathrm{id}\otimes\alpha_0)\hat\delta
\end{equation}
since $V$ satisfies the pentagonal relation $V_{23}V_{12}=V_{12}V_{13}V_{23}$.

\begin{lemma}\label{lem:cond}
$E:=(\mathrm{id}\otimes h)\alpha_0$ is a normal conditional expectation from $\hat M$ onto $Z(\hat M):=\hat M\cap \hat M'$.
\end{lemma}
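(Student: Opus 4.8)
The plan is to verify directly that $E=(\mathrm{id}\otimes h)\alpha_0$ is (i) a normal unital completely positive map $\hat M\to\hat M$, (ii) has range contained in $Z(\hat M)$, (iii) acts as the identity on $Z(\hat M)$, and (iv) satisfies the bimodule property $E(axb)=aE(x)b$ for $a,b\in Z(\hat M)$; these together give that $E$ is a normal conditional expectation onto $Z(\hat M)$. Normality and complete positivity are immediate since $\alpha_0=\mathrm{Ad}\,V$ is a normal $*$-homomorphism and $h$ is a normal state, and unitality follows from $\alpha_0(1)=1\otimes1$ together with $h(1)=1$. The substantive content is the identification of the range.

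First I would show $E(\hat M)\subseteq Z(\hat M)$. Apply $\hat\delta$ to $E(x)$ and use the covariance relation \eqref{eq:action}: since $(\hat\delta\otimes\mathrm{id})\alpha_0=\mathrm{Ad}\,V_{13}\,(\mathrm{id}\otimes\alpha_0)\hat\delta$, slicing with $h$ on the last leg gives
\[
\hat\delta(E(x))=(\mathrm{id}\otimes\mathrm{id}\otimes h)\big(\mathrm{Ad}\,V_{13}\,(\mathrm{id}\otimes\alpha_0)\hat\delta(x)\big).
\]
Now $V_{13}$ lives on legs $1$ and $3$, while $h$ is applied on leg $3$; using that $h$ is right-invariant, i.e. $(h\otimes\omega)\delta=\omega(1)h$ — equivalently the fact that the Haar weight absorbs the adjoint action of the multiplicative unitary on the leg it is applied to — the $\mathrm{Ad}\,V_{13}$ can be removed under $(\mathrm{id}\otimes h)$ on leg $3$. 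Concretely, for fixed first leg the expression $(\mathrm{id}\otimes h)(\mathrm{Ad}\,V)(1\otimes y)=(\mathrm{id}\otimes h)(y)\,1$ by invariance of $h$ under the coaction built from $V$; carrying this through one obtains $\hat\delta(E(x))=(\mathrm{id}\otimes E)\hat\delta(x)$ on one hand, but more to the point $\hat\delta(E(x))=1\otimes E(x)$, i.e. $E(x)$ is fixed by $\hat\delta(\,\cdot\,)=V^*(1\otimes\,\cdot\,)V$, which is exactly the statement that $E(x)$ commutes with $V$ in the appropriate sense and hence lies in $\hat M\cap\hat M'=Z(\hat M)$. (Here one uses that $z\in Z(\hat M)$ iff $\hat\delta(z)=1\otimes z$, since $\hat M$ is generated by $\hat\pi_h(M_*)=(\mathrm{id}\otimes M_*)(V)$ and $\hat\delta(z)=V^*(1\otimes z)V$.)

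Next, for $z\in Z(\hat M)$ I would check $\alpha_0(z)=z\otimes1$: indeed $\alpha_0(z)=V(z\otimes1)V^*$, and since $z$ is central in $\hat M$ while $V\in\hat M\bar\otimes M$, the element $V$ commutes with $z\otimes1$, giving $\alpha_0(z)=z\otimes1$; then $E(z)=(\mathrm{id}\otimes h)(z\otimes1)=h(1)z=z$. The bimodule property then follows from (ii), (iii) and the fact that $\alpha_0$ is a homomorphism: for $a,b\in Z(\hat M)$ we have $\alpha_0(axb)=\alpha_0(a)\alpha_0(x)\alpha_0(b)=(a\otimes1)\alpha_0(x)(b\otimes1)$, so applying $\mathrm{id}\otimes h$ yields $E(axb)=aE(x)b$. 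By Tomiyama's theorem a normal unital completely positive idempotent onto a subalgebra is automatically a conditional expectation, so it suffices to note $E^2=E$, which holds since $E(x)\in Z(\hat M)$ and $E$ fixes $Z(\hat M)$.

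The main obstacle is the first step: rigorously justifying that the adjoint action $\mathrm{Ad}\,V_{13}$ disappears after slicing leg $3$ with the Haar state, i.e. the computation showing $\hat\delta(E(x))=1\otimes E(x)$. This requires careful bookkeeping with the leg-numbering conventions and an appeal to the invariance properties of $h$ (right-invariance $(h\otimes\omega)\delta=\omega(1)h$ and the associated fact that $V$ implements a coaction leaving $h$ invariant). Everything else — normality, complete positivity, unitality, and the bimodule/idempotent properties — is formal once the range is identified.
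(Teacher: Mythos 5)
Your steps (i), (iii), (iv) and the final appeal to Tomiyama's theorem are fine, but the crucial step (ii) — showing $E(\hat M)\subseteq Z(\hat M)$ — does not work as sketched, for two reasons. First, the identity you aim for, $\hat\delta(E(x))=1\otimes E(x)$, cannot hold unless $E(x)$ is a scalar: applying $\mathrm{id}\otimes\hat\epsilon$ and using $(\mathrm{id}\otimes\hat\epsilon)\hat\delta=\mathrm{id}$ gives $E(x)=\hat\epsilon(E(x))1$, which is incompatible with $E$ being onto $Z(\hat M)$ (for a compact quantum group $Z(W^*(G))\cong\ell^\infty(\widehat G)$ is large). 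Correspondingly, the proposed removal of $\mathrm{Ad}V_{13}$ under $(\mathrm{id}\otimes\mathrm{id}\otimes h)$ is not justified by any invariance of $h$; a principle of the form $(\mathrm{id}\otimes h)(\mathrm{Ad}V(y))=(\mathrm{id}\otimes h)(y)$ would, applied to $y=x\otimes1$, give $E=\mathrm{id}$ and force $\hat M$ abelian. Second, the characterization ``$z\in Z(\hat M)$ iff $\hat\delta(z)=1\otimes z$'' is wrong: $\hat\delta(z)=1\otimes z$ means $1\otimes z$ commutes with $V$, and slicing the \emph{first} leg of $V$ yields generators of $M$, not of $\hat M$, so this condition describes $\hat M\cap M'$ (and, in the presence of the counit, only $\mathbb{C}1$). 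The condition that does characterize the center sits on the other leg: $z\otimes1$ commutes with $V$, i.e.\ $\alpha_0(z)=z\otimes1$, because the slices $(\mathrm{id}\otimes\omega)(V)=\hat\pi_h(\omega)$ generate $\hat M$; this is the identity $\hat M^{\alpha_0}=Z(\hat M)$, one inclusion of which you in fact prove correctly in your step (iii).

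The repair stays inside your framework but replaces $\hat\delta$ by the action $\alpha_0$: compute $\alpha_0(E(x))$ rather than $\hat\delta(E(x))$. Using that $\alpha_0$ is a right action, $(\alpha_0\otimes\mathrm{id})\alpha_0=(\mathrm{id}\otimes\delta)\alpha_0$, together with left invariance of $h$ in the form $(\mathrm{id}\otimes h)\delta(\,\cdot\,)=h(\,\cdot\,)1$, one gets
\begin{align*}
\alpha_0(E(x))=(\mathrm{id}\otimes\mathrm{id}\otimes h)\bigl((\alpha_0\otimes\mathrm{id})(\alpha_0(x))\bigr)=(\mathrm{id}\otimes\mathrm{id}\otimes h)\bigl((\mathrm{id}\otimes\delta)(\alpha_0(x))\bigr)=E(x)\otimes1,
\end{align*}
so $E(x)\in\hat M^{\alpha_0}=Z(\hat M)$ and $E\circ E=E$; this is exactly the paper's argument, and Equation \eqref{eq:action} (your starting point) is not needed here. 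With this substitution, the rest of your proposal — normality, unitality, complete positivity, $E|_{Z(\hat M)}=\mathrm{id}$ via $\alpha_0(z)=z\otimes1$, the bimodule property, and the reduction to Tomiyama's theorem — goes through as written.
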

\begin{proof}
By definition, $E$ is normal. Moreover, for any $x\in \hat M$ we have
\begin{align*}
\alpha_0(E(x))
&=(\mathrm{id}\otimes\mathrm{id}\otimes h)((\alpha\otimes\mathrm{id})(\alpha_0(x)))\\
&=(\mathrm{id}\otimes\mathrm{id}\otimes h)((\mathrm{id}\otimes\delta)(\alpha_0(x)))\\
&=(\mathrm{id}\otimes\mathrm{id}\otimes h)(\alpha_0(x)\otimes1)\\
&=E(x)\otimes1.
\end{align*}
Thus, $E(x)\in\hat M^{\alpha_0}=Z(\hat M)$ and $E(E(x))=E(x)$. Namely, $E$ is a conditional expectation from $\hat M$ onto $Z(\hat M)$ by \cite[Theorem II.6.10.2]{Blackadar}. 
\end{proof}

The following is trivial.
\begin{lemma}\label{lem:cond_inv}
If $\varphi$ is $\alpha_0$-invariant, then $\varphi E=\varphi$.
\end{lemma}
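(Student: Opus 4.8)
The plan is simply to unwind the definitions and invoke the slice-map Fubini property, together with the standing assumption $h(1)=1$. For $x\in\hat M$ we have, by definition of $E$,
\[
\varphi(E(x))=\varphi\bigl((\mathrm{id}\otimes h)(\alpha_0(x))\bigr).
\]
Since $\varphi\in\hat M_*$ and $h$ is a normal state on $M$, the two slice maps commute: as functionals on $\hat M\bar\otimes M$ one has $\varphi\circ(\mathrm{id}\otimes h)=h\circ(\varphi\otimes\mathrm{id})=\varphi\otimes h$. Applying this identity to the element $\alpha_0(x)\in\hat M\bar\otimes M$ gives
\[
\varphi(E(x))=h\bigl((\varphi\otimes\mathrm{id})(\alpha_0(x))\bigr).
\]
Now the $\alpha_0$-invariance of $\varphi$ says precisely that $(\varphi\otimes\mathrm{id})(\alpha_0(x))=\varphi(x)\,1$, so the right-hand side equals $\varphi(x)\,h(1)=\varphi(x)$ because $h$ is a state. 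Hence $\varphi E=\varphi$.

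There is no real obstacle here: the only points requiring any comment are the commutation of the normal slice maps $\mathrm{id}\otimes h$ and $\varphi\otimes\mathrm{id}$ on the von Neumann algebra tensor product $\hat M\bar\otimes M$ (a standard Fubini-type fact for normal functionals) and the normalization $h(1)=1$, which is assumed throughout the paper. This is why the statement is recorded as trivial.
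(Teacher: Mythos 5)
Your proof is correct and is precisely the routine unwinding of definitions the paper has in mind when it records the lemma as trivial (and hence gives no proof): commuting the normal slice maps on $\hat M\bar\otimes M$ and applying $\alpha_0$-invariance together with $h(1)=1$. Nothing further is needed.
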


For $\varphi\in\hat M_*$ define $T_\varphi\colon \hat M\to \hat M$ by $T_\varphi:=(\mathrm{id}\otimes\varphi)\hat\delta$. Then we have the following two lemmas.
\begin{lemma}\label{lem:center_preserving}
If $\varphi$ is $\alpha_0$-invariant, then $T_\varphi$ preserves the center $Z(\hat M):=\hat M\cap \hat M'$.
\end{lemma}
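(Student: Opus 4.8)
The plan is to exploit the relation \eqref{eq:action} together with the identification $\hat M^{\alpha_0}=Z(\hat M)$ already used in the proof of Lemma \ref{lem:cond}. Since $Z(\hat M)=\hat M^{\alpha_0}$, it suffices to check that $\alpha_0(T_\varphi(x))=T_\varphi(x)\otimes 1$ for every $x\in\hat M$ with $\alpha_0(x)=x\otimes 1$.

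So I would fix such an $x$ and apply the slice map $\mathrm{id}\otimes\varphi\otimes\mathrm{id}$, which maps $\hat M\bar\otimes\hat M\bar\otimes M$ onto $\hat M\bar\otimes M$, to both sides of \eqref{eq:action} evaluated at $x$. On the left-hand side, $\alpha_0(x)=x\otimes 1$ gives $(\hat\delta\otimes\mathrm{id})(\alpha_0(x))=\hat\delta(x)\otimes 1$, and since $\varphi$ is slotted into the middle leg while $\hat\delta\otimes\mathrm{id}$ affects only the first two legs, the slice produces $T_\varphi(x)\otimes 1$.

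For the right-hand side I would use two facts. First, $V_{13}$ has trivial middle leg, so the bimodule property of the normal slice map pulls it out of the slice, turning the right-hand side into $V\big((\mathrm{id}\otimes\varphi\otimes\mathrm{id})((\mathrm{id}\otimes\alpha_0)\hat\delta(x))\big)V^*$, where $V$ is now regarded as an element of $\hat M\bar\otimes M$. Second, $\alpha_0$-invariance of $\varphi$ is precisely the identity $(\varphi\otimes\mathrm{id})\alpha_0(z)=\varphi(z)1$ for all $z\in\hat M$, whence $(\mathrm{id}\otimes\varphi\otimes\mathrm{id})\circ(\mathrm{id}\otimes\alpha_0)=\mathrm{id}\otimes(\varphi(\,\cdot\,)1)$; applying this to $\hat\delta(x)$ gives $(\mathrm{id}\otimes\varphi)(\hat\delta(x))\otimes 1=T_\varphi(x)\otimes 1$. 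Hence the right-hand side equals $V(T_\varphi(x)\otimes 1)V^*=\alpha_0(T_\varphi(x))$. Comparing the two computations yields $\alpha_0(T_\varphi(x))=T_\varphi(x)\otimes 1$, i.e., $T_\varphi(x)\in\hat M^{\alpha_0}=Z(\hat M)$.

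The step needing care — and the main (though mild) obstacle — is the pulling of $V_{13}$ out of the slice: this is justified by the standard bimodule property of normal slice maps, namely that for an operator $W$ with trivial middle leg one has $(\mathrm{id}\otimes\varphi\otimes\mathrm{id})(WYW^*)=\tilde W\,(\mathrm{id}\otimes\varphi\otimes\mathrm{id})(Y)\,\tilde W^*$ with $\tilde W$ the corresponding two-leg operator, and here normality of $\varphi$ is what makes this work. Everything else is formal manipulation with the definition $T_\varphi=(\mathrm{id}\otimes\varphi)\hat\delta$ and with Sweedler-type bookkeeping for $\hat\delta$.
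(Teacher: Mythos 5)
Your argument is correct and is essentially the paper's own proof read in the opposite direction: the paper starts from $\alpha_0(T_\varphi(x))$, inserts $\alpha_0$ via the invariance of $\varphi$, commutes $\mathrm{Ad}V_{13}$ with the slice map, and applies \eqref{eq:action}, while you slice both sides of \eqref{eq:action} and identify the results — the same ingredients ($Z(\hat M)=\hat M^{\alpha_0}$, the module property of the normal slice map, $\alpha_0$-invariance of $\varphi$, and $\alpha_0(x)=x\otimes1$) in the same roles. No gaps; the point you flag about pulling $V_{13}$ through the slice is exactly the step the paper uses implicitly in its third equality.
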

\begin{proof}
By the definition of $\alpha_0$, we have $Z(\hat M)=\hat M^{\alpha_0}$. Thus it suffices to show that $T_\varphi$ preserves $\hat M^{\alpha_0}$. By Equation \eqref{eq:action}, for any $x\in\hat M^{\alpha_0}$ we have
\begin{align*}
\alpha_0(T_\varphi(x))
&=\mathrm{Ad}V((\mathrm{id}\otimes \varphi\otimes\mathrm{id})(\hat\delta(x)\otimes1))\\
&=\mathrm{Ad}V((\mathrm{id}\otimes \varphi\otimes\mathrm{id})((\mathrm{id}\otimes\alpha_0)(\hat\delta(x))))\\
&=(\mathrm{id}\otimes \varphi\otimes\mathrm{id})(\mathrm{Ad}V((\mathrm{id}\otimes\alpha_0)(\hat\delta(x))))\\
&=(\mathrm{id}\otimes \varphi\otimes\mathrm{id})((\hat\delta\otimes\mathrm{id})(\alpha_0(x)))\\
&=T_\varphi(x)\otimes1.
\end{align*}
\end{proof}

\begin{remark}
There are many investigations of above transformations in probability theory and representation theory of compact (quantum) groups. See \cite{Biane91}, \cite{Izumi02}, \cite{INT}, \cite{Kuan18}, ... etc. 
\end{remark}

\begin{lemma}\label{lem:invariant_functional}
If $\varphi, \phi\in \hat M_*$ are $\alpha_0$-invariant, then $\varphi*\phi$ is also $\alpha_0$-invariant.
\end{lemma}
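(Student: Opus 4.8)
The statement to prove is that $(\varphi*\phi\otimes\mathrm{id})(\alpha_0(x))=(\varphi*\phi)(x)1$ for every $x\in\hat M$. The plan is to unwind the convolution $\varphi*\phi=(\varphi\otimes\phi)\hat\delta$, invoke the compatibility relation \eqref{eq:action} between $\hat\delta$ and $\alpha_0$ to rewrite $(\hat\delta\otimes\mathrm{id})\alpha_0$ as $\mathrm{Ad}(V_{13})\circ(\mathrm{id}\otimes\alpha_0)\hat\delta$, and then consume the invariance hypotheses for $\phi$ and $\varphi$ in turn, slicing off the three tensor legs from the middle outward. This mirrors the bookkeeping already used in the proof of Lemma \ref{lem:center_preserving}.

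In detail: by the definition of $*$ we have $(\varphi*\phi\otimes\mathrm{id})\alpha_0(x)=(\varphi\otimes\phi\otimes\mathrm{id})\big((\hat\delta\otimes\mathrm{id})\alpha_0(x)\big)$, and by \eqref{eq:action} this equals $(\varphi\otimes\phi\otimes\mathrm{id})\big(V_{13}\,(\mathrm{id}\otimes\alpha_0)\hat\delta(x)\,V_{13}^{*}\big)$. Since $V_{13}$ acts trivially on the second tensor leg, the partial slice $(\mathrm{id}\otimes\phi\otimes\mathrm{id})$ commutes through the conjugation by $V_{13}$, turning it into conjugation by $V$ on the remaining two legs; together with the $\alpha_0$-invariance of $\phi$, i.e.\ $(\phi\otimes\mathrm{id})\alpha_0=\phi(\,\cdot\,)1$, and the identity $(\mathrm{id}\otimes\phi)\hat\delta=T_\phi$, the inner two-leg expression reduces to $V\,(T_\phi(x)\otimes1)\,V^{*}=\alpha_0(T_\phi(x))$. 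Applying the outer slice $(\varphi\otimes\mathrm{id})$ and then the $\alpha_0$-invariance of $\varphi$ gives $(\varphi\otimes\mathrm{id})\alpha_0(T_\phi(x))=\varphi(T_\phi(x))1=(\varphi\otimes\phi)\hat\delta(x)\,1=(\varphi*\phi)(x)1$, which is the desired identity. One uses along the way that $T_\phi$ maps $\hat M$ into $\hat M$, which is clear from $\hat\delta(\hat M)\subseteq\hat M\bar\otimes\hat M$, so that $\alpha_0(T_\phi(x))$ is well defined.

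I do not expect a serious obstacle here: the argument is essentially formal once the leg numbering is set up correctly. The one point that deserves a careful line is the claim that the middle slice $(\mathrm{id}\otimes\phi\otimes\mathrm{id})$ passes through $\mathrm{Ad}(V_{13})$ and produces $\mathrm{Ad}(V)$ on the first and third legs; this is justified by noting that $V_{13}$ lies in the first and third tensor factors, checking the identity on elementary tensors, and extending by normality of the slice maps --- exactly as in the proof of Lemma \ref{lem:center_preserving}.
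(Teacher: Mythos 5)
Your proposal is correct and follows essentially the same route as the paper's proof: expand $\varphi*\phi=(\varphi\otimes\phi)\hat\delta$, apply Equation \eqref{eq:action}, slice out the middle leg with $\phi$ (using that $V_{13}$ ignores it together with the $\alpha_0$-invariance of $\phi$) to reduce to $\alpha_0(T_\phi(x))$, and finish with the $\alpha_0$-invariance of $\varphi$. No gaps; this matches the paper's computation line for line.
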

\begin{proof}
By Equation \eqref{eq:action},  for any $x\in \hat M$ we have
\begin{align*}
(\varphi*\phi\otimes\mathrm{id})(\alpha_0(x))
&=(\varphi\otimes\phi\otimes\mathrm{id})((\hat\delta\otimes\mathrm{id})(\alpha_0(x)))\\
&=(\varphi\otimes\phi\otimes\mathrm{id})(\mathrm{Ad}V((\mathrm{id}\otimes\alpha_0)(\hat\delta(x))))\\
&=(\varphi\otimes\mathrm{id})(\mathrm{Ad}V((\mathrm{id}\otimes\phi\otimes\mathrm{id})(\hat\delta(x)\otimes1)))\\
&=(\varphi\otimes\mathrm{id})(\alpha_0((\mathrm{id}\otimes\phi)(\hat\delta(x))))\\
&=(\varphi\otimes\phi)(\hat\delta(x))1\\
&=(\varphi*\phi)(x)1.
\end{align*}
\end{proof}

Since $h$ is a state, we have $1\in\mathfrak{n}_h$. Thus, we obtain $\hat\epsilon\in \hat M_*$ by $\hat\epsilon(x):=\langle x\eta_h(1), \eta_h(1)\rangle$ for any $x\in \hat M$. Then it is known that $\hat\epsilon$ becomes a counit of $\hat M$, i.e., $\hat\epsilon$ is multiplicative and satisfies that
$(\hat\epsilon\otimes\mathrm{id})\hat\delta=\mathrm{id}=(\mathrm{id}\otimes\hat\epsilon)\hat\delta$.
Remark that $\hat\epsilon(\hat\pi_h(\omega))=\omega(1)$ for any $\omega\in M_*$. 

\begin{lemma}\label{lem:counit_inv}
The counit $\hat\epsilon$ is $\alpha_0$-invariant.
\end{lemma}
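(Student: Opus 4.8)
The plan is to unwind the definition of $\alpha_0$-invariance, namely $(\hat\epsilon\otimes\mathrm{id})(\alpha_0(x))=\hat\epsilon(x)1$ for all $x\in\hat M$, and reduce it to the counit property of $\hat\epsilon$ together with the fact that $V\in\hat M\bar\otimes M$. Recall $\alpha_0(x)=V(x\otimes1)V^*$. First I would check the claim on the generators $x=\hat\pi_h(\omega)$ of $\hat M$, since $\hat M$ is generated by such elements and $\alpha_0$, $\hat\epsilon$ are normal; by density and normality it suffices to verify the identity there.

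The key computation is to relate $(\hat\epsilon\otimes\mathrm{id})\alpha_0$ to $\hat\delta$. Using $\hat\delta(x)=V^*(1\otimes x)V$, the conjugation by $V$ in $\alpha_0$ is, up to a flip, the inverse operation, so one expects
\[
(\hat\epsilon\otimes\mathrm{id})(\alpha_0(x))=(\hat\epsilon\otimes\mathrm{id})(V(x\otimes1)V^*).
\]
Here I would exploit that $\hat\epsilon(y)=\langle y\,\eta_h(1),\eta_h(1)\rangle$, so that $(\hat\epsilon\otimes\mathrm{id})(V(x\otimes1)V^*)$ is computed by pairing the first leg against the vector $\eta_h(1)$. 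The point is that $V(\eta_h(1)\otimes\xi)=(\pi_h\otimes\pi_h)(\delta(1))\,\eta_h(1)\otimes\xi=\eta_h(1)\otimes\xi$ by the definition of the multiplicative unitary applied to $x=1$ (using $\delta$ unital); hence $V^*(\eta_h(1)\otimes\xi)=\eta_h(1)\otimes\xi$ as well, i.e. $\eta_h(1)$ is a fixed vector for $V$ in the first leg in the appropriate sense. Feeding this in, the conjugation collapses and $(\hat\epsilon\otimes\mathrm{id})(\alpha_0(x))=\hat\epsilon(x)1$ follows directly.

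Alternatively, and perhaps more cleanly for the write-up, I would relate $\alpha_0$ to $\hat\delta$ via the flip and the first-leg/second-leg symmetry of $V$, then invoke the already-established counit relation $(\hat\epsilon\otimes\mathrm{id})\hat\delta=\mathrm{id}$ and the multiplicativity of $\hat\epsilon$ to conclude; the identity $\hat\epsilon(\hat\pi_h(\omega))=\omega(1)$ together with $(\mathrm{id}\otimes\omega)(V)=\hat\pi_h(\omega)$ makes the generator check essentially a one-line pairing. The main obstacle is purely bookkeeping: correctly tracking which leg of $V$ lives in $\hat M$ versus $M$ and ensuring the vector $\eta_h(1)$ is used on the correct side; once the fixed-vector property $V(\eta_h(1)\otimes\xi)=\eta_h(1)\otimes\xi$ is in hand, there is no analytic difficulty and the statement is, as the paper says, essentially trivial.
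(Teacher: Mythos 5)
Your proposal is correct and follows essentially the same route as the paper: both rest on the fixed-vector property $V(\eta_h(1)\otimes\xi)=\eta_h(1)\otimes\xi$ (coming from $\delta(1)=1\otimes1$ in the definition of $V$) together with $\hat\epsilon$ being the vector state at $\eta_h(1)$, so that pairing the second leg with vector functionals collapses the conjugation in $\alpha_0(x)=V(x\otimes1)V^*$ and yields $(\hat\epsilon\otimes\mathrm{id})(\alpha_0(x))=\hat\epsilon(x)1$ for every $x\in\hat M$. The preliminary reduction to generators $\hat\pi_h(\omega)$ is harmless but unnecessary, since this vector computation works directly for arbitrary $x\in\hat M$, exactly as in the paper.
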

\begin{proof}
By the definition of $V$, for any $\xi\in H_h$ we have $V(\eta_h(1)\otimes\xi)=\eta_h(1)\otimes\xi$. For any $\xi, \eta\in H_h$ we define $\omega_{\xi, \eta}\in M_*$ by $\omega_{\xi, \eta}(a):=\langle a\xi, \eta\rangle$ for any $a\in M$. Then, for any $x\in\hat M$
\begin{align*}
\omega_{\xi,\eta}((\hat\epsilon\otimes\mathrm{id})(\alpha_0(x)))
&=\langle V(x\otimes 1)V^*\eta_h(1)\otimes\xi, \eta_h(1)\otimes\eta\rangle\\
&=\langle x\eta_h(1)\otimes\xi, \eta_h(1)\otimes\eta\rangle\\
&=\hat\epsilon(x)\omega_{\xi, \eta}(1).
\end{align*}
Namely, we have $\omega((\hat\epsilon\otimes \mathrm{id})(\alpha_0(x)))=\hat\epsilon(x)\omega(1)$ for any $\omega\in M_*$, i.e., $(\hat\epsilon\otimes \mathrm{id})(\alpha_0(x))=\hat\epsilon(x)1$.
\end{proof}

For $\varphi\in \hat M_*$ we define
$\exp_*(\varphi):=\sum_{n=0}^\infty\varphi^{*n}/n!$,
where $\varphi^{*n}$ is the product $\varphi*\cdots*\varphi$ of $n$ copies of $\varphi$ and $\varphi^{*0}:=\widehat \epsilon$. We remark that the right-hand side is the limit of $\sum_{n=0}^N\varphi^{*n}/n!$ as $N\to\infty$ in the Banach space $\hat M_*$.

\begin{corollary}\label{cor:exp_inv}
If $\varphi\in \hat M_*$ is $\alpha_0$-invariant, then $\exp_*(\varphi)$ is also $\alpha_0$-invariant.
\end{corollary}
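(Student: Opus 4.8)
The plan is to deduce Corollary~\ref{cor:exp_inv} from Lemma~\ref{lem:invariant_functional} together with the fact (established in Lemma~\ref{lem:counit_inv}) that the counit $\hat\epsilon=\varphi^{*0}$ is $\alpha_0$-invariant, plus a limiting argument using normality of $\alpha_0$ and continuity of the maps involved. First I would observe that the set
\[
S:=\{\psi\in\hat M_*\mid \psi\text{ is }\alpha_0\text{-invariant}\}
\]
is a linear subspace of $\hat M_*$: indeed $\alpha_0$-invariance of $\psi$ means $(\psi\otimes\mathrm{id})(\alpha_0(x))=\psi(x)1$ for all $x\in\hat M$, which is manifestly linear in $\psi$. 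By Lemma~\ref{lem:invariant_functional} and Lemma~\ref{lem:counit_inv}, $S$ contains $\widehat\epsilon=\varphi^{*0}$ and is closed under the convolution product $*$, so by induction every partial sum $\sum_{n=0}^N\varphi^{*n}/n!$ lies in $S$ whenever $\varphi\in S$.

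The remaining point is to pass to the limit. Since $\exp_*(\varphi)$ is by definition the norm limit in $\hat M_*$ of the partial sums $\psi_N:=\sum_{n=0}^N\varphi^{*n}/n!$, I would check that $S$ is norm-closed in $\hat M_*$. This follows because for fixed $x\in\hat M$ the evaluation $\psi\mapsto(\psi\otimes\mathrm{id})(\alpha_0(x))$ is norm-continuous from $\hat M_*$ into $\hat M$ (it is the composition of $\psi\mapsto\psi\otimes\mathrm{id}$, which has norm $\le\|\alpha_0(x)\|$ as an operator into $\hat M$ after applying it to the fixed element $\alpha_0(x)\in\hat M\bar\otimes M$, and norm convergence $\psi_N\to\exp_*(\varphi)$ therefore forces $(\psi_N\otimes\mathrm{id})(\alpha_0(x))\to(\exp_*(\varphi)\otimes\mathrm{id})(\alpha_0(x))$ in norm), while the right-hand side $\psi_N(x)1$ converges to $\exp_*(\varphi)(x)1$. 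Equating the two limits gives $(\exp_*(\varphi)\otimes\mathrm{id})(\alpha_0(x))=\exp_*(\varphi)(x)1$ for all $x$, i.e.\ $\exp_*(\varphi)$ is $\alpha_0$-invariant.

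I do not anticipate a genuine obstacle here; the result is essentially formal once Lemmas~\ref{lem:invariant_functional} and~\ref{lem:counit_inv} are in hand. The only mild care required is the interchange of limit and the slice map $(\,\cdot\,\otimes\mathrm{id})\alpha_0$, and for that one only needs the elementary continuity estimate $\|(\psi\otimes\mathrm{id})(y)\|\le\|\psi\|\,\|y\|$ for $y\in\hat M\bar\otimes M$ applied with $y=\alpha_0(x)$, rather than any deeper structure of the Woronowicz algebra. Thus the proof reduces to: (i) invariance is preserved under linear combinations; (ii) invariance is preserved under $*$ (Lemma~\ref{lem:invariant_functional}) and holds for $\widehat\epsilon$ (Lemma~\ref{lem:counit_inv}), hence for all partial sums; (iii) invariance is norm-closed, hence passes to the limit $\exp_*(\varphi)$.
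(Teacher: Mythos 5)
Your proposal is correct and follows the same route as the paper: the paper's proof is exactly the observation that the set of $\alpha_0$-invariant functionals is norm-closed in $\hat M_*$ combined with Lemma \ref{lem:invariant_functional} (and, implicitly, Lemma \ref{lem:counit_inv} for the term $\varphi^{*0}=\hat\epsilon$). You merely spell out the details the paper leaves implicit, namely the linearity of the invariance condition and the slice-map estimate $\|(\psi\otimes\mathrm{id})(\alpha_0(x))\|\le\|\psi\|\,\|\alpha_0(x)\|$ justifying norm-closedness.
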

\begin{proof}
Since the set of $\alpha_0$-invariant functionals is closed in the norm topology of $\hat M_*$, it follows from Lemma \ref{lem:invariant_functional}
\end{proof}

\section{Center-preserving, right Haar weight invariant $CP_0$-semigroups}\label{sec:semigroups}
Let $M$ be a $W^*$-algebra and $(T_t)_{t\geq0}$ a semigroup of unital completely positive $\sigma$-weakly continuous linear maps $T_t\colon M\to M$. If $T_0=\mathrm{id}$ and $\lim_{t\searrow 0}\omega(T_t(x))=\omega(x)$ for any $x\in M$ and $\omega\in M_*$, then $(T_t)_{t\geq0}$ is called a \emph{$CP_0$-semigroup} on $M$. For a weight $\psi$ on $M$ the $CP_0$-semigroup $(T_t)_{t\geq0}$ is said to be \emph{$\psi$-invariant} if $\psi T_t=\psi$ for any $t\geq0$.

Let $\mathcal{M}=(M, \delta, R, \tau, h)$ be a Woronowicz algebra and $\hat{\mathcal{M}}=(\hat M, \hat\delta, \hat R, \hat\tau, \hat h)$ the dual Woronowicz algebra of $\mathcal{M}$. Assume that $h$ is a state. Thus, $\hat{\mathcal{M}}$ has a counit $\hat\epsilon$. A one-parameter family $(\varphi_t)_{t\geq0}$ of functionals in $\hat M_*$ is called a \emph{convolution semigroup} if $\varphi_0=\hat\epsilon$ and $\varphi_t*\varphi_s=\varphi_{t+s}$ for any $t, s\geq0$. If $\lim_{t\searrow0}\|\varphi_t-\hat\epsilon\|=0$, then $(\varphi_t)_{t\geq0}$ is said to be norm-continuous. If $\lim_{t\searrow0}\varphi_t(x)=\hat\epsilon(x)$ for any $x\in \hat M$, then $(\varphi_t)_{t\geq0}$ is said to be weakly continuous. 

\begin{proposition}
Let $(\varphi_t)_{t\geq0}$ be a convolution semigroup and $T_t:=(\mathrm{id}\otimes\varphi_t)\hat\delta$. Then the following are equivalent:
\begin{enumerate}
\item $(\varphi_t)_{t\geq0}$ is weakly continuous and $\varphi_t$ is a state for any $t\geq0$,
\item $(T_t)_{t\geq0}$ is a $CP_0$-semigroup on $\hat M$.
\end{enumerate}
\end{proposition}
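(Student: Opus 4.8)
The plan is to observe first that, for \emph{any} convolution semigroup $(\varphi_t)_{t\geq 0}$, the family $T_t=(\mathrm{id}\otimes\varphi_t)\hat\delta$ automatically satisfies every requirement of a $CP_0$-semigroup except possibly complete positivity, unitality, and the continuity at $t=0$. Indeed, since $\hat\delta$ is a normal unital $*$-homomorphism and $\varphi_t\in\hat M_*$, each $T_t$ is a $\sigma$-weakly continuous linear map of $\hat M$ into itself; coassociativity $(\hat\delta\otimes\mathrm{id})\hat\delta=(\mathrm{id}\otimes\hat\delta)\hat\delta$ together with the convolution identity $\varphi_t*\varphi_s=\varphi_{t+s}$ gives $T_tT_s=(\mathrm{id}\otimes\varphi_t\otimes\varphi_s)(\mathrm{id}\otimes\hat\delta)\hat\delta=(\mathrm{id}\otimes\varphi_{t+s})\hat\delta=T_{t+s}$; and $\varphi_0=\hat\epsilon$ with $(\mathrm{id}\otimes\hat\epsilon)\hat\delta=\mathrm{id}$ gives $T_0=\mathrm{id}$. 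Two further identities will carry the rest of the argument: $T_t(1)=\varphi_t(1)1$, and, applying $\hat\epsilon$ on the left and using $(\hat\epsilon\otimes\mathrm{id})\hat\delta=\mathrm{id}$, $\hat\epsilon\circ T_t=(\hat\epsilon\otimes\varphi_t)\hat\delta=\varphi_t$.

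For the implication $(1)\Rightarrow(2)$ I would argue as follows. If each $\varphi_t$ is a state, then $T_t$ is completely positive, being the composition of the completely positive map $\hat\delta$ with the completely positive right slice map $\mathrm{id}\otimes\varphi_t$, and $T_t(1)=\varphi_t(1)1=1$, so $T_t$ is unital. For the continuity at $0$, fix $x\in\hat M$ and $\omega\in\hat M_*$ and write
\[\omega(T_t(x))=(\omega\otimes\varphi_t)(\hat\delta(x))=\varphi_t\bigl((\omega\otimes\mathrm{id})(\hat\delta(x))\bigr),\]
where $(\omega\otimes\mathrm{id})\colon\hat M\bar\otimes\hat M\to\hat M$ is the normal right slice map. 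Put $y:=(\omega\otimes\mathrm{id})(\hat\delta(x))$, a \emph{fixed} element of $\hat M$; then weak continuity of $(\varphi_t)_{t\geq 0}$ yields $\omega(T_t(x))=\varphi_t(y)\to\hat\epsilon(y)=(\omega\otimes\hat\epsilon)(\hat\delta(x))=\omega(x)$ as $t\searrow 0$. Hence $(T_t)_{t\geq 0}$ is a $CP_0$-semigroup.

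For $(2)\Rightarrow(1)$ I would invoke the identity $\varphi_t=\hat\epsilon\circ T_t$ recorded above. Since $\hat\epsilon$ is a (vector) state and $T_t$ is positive and unital, $\varphi_t$ is positive and $\varphi_t(1)=\hat\epsilon(T_t(1))=\hat\epsilon(1)=1$, so $\varphi_t$ is a state. For weak continuity, note that $\hat\epsilon\in\hat M_*$, so the defining $t\searrow 0$ continuity of the $CP_0$-semigroup, applied with $\omega=\hat\epsilon$, gives $\varphi_t(x)=\hat\epsilon(T_t(x))\to\hat\epsilon(x)$ for every $x\in\hat M$; this is precisely weak continuity of $(\varphi_t)_{t\geq 0}$.

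The one step that genuinely needs care is the continuity in $(1)\Rightarrow(2)$: the pointwise convergence $\varphi_t(y)\to\hat\epsilon(y)$ on $\hat M$ does not by itself control $(\omega\otimes\varphi_t)(z)$ for an arbitrary $z$ in the von Neumann tensor product $\hat M\bar\otimes\hat M$, because the algebraic tensor product is only $\sigma$-weakly (not norm) dense there, so a naive density argument, even with the uniform bound $\|\varphi_t\|=1$, does not close the gap. The normal right slice map $(\omega\otimes\mathrm{id})$ is exactly what resolves this, collapsing $(\omega\otimes\varphi_t)(\hat\delta(x))$ to the evaluation of $\varphi_t$ at a single fixed vector of $\hat M$; the remaining manipulations are routine bookkeeping with the counit and coassociativity axioms of the dual Woronowicz algebra.
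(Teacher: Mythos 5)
Your proposal is correct and follows essentially the same route as the paper: the recovery formula $\varphi_t=\hat\epsilon*\varphi_t=\hat\epsilon\circ T_t$ is exactly the paper's argument for $(2)\Rightarrow(1)$, and your slice-map elaboration of $(1)\Rightarrow(2)$ merely fills in the details the paper dismisses as clear. No gaps.
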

\begin{proof}
It is clear that (1) implies (2). For any $t\geq0$ we have $\varphi_t=\hat\epsilon*\varphi_t=\hat\epsilon T_t$ since $(\hat\epsilon\otimes\mathrm{id})\hat\delta=\mathrm{id}$. Thus, (2) implies (1).
\end{proof}

A linear functional $\psi\in \hat M_*$ is said to be \emph{conditionally positive} if $\psi(x^*x)\geq0$ for any $x\in\mathrm{Ker}\hat\epsilon$. It is said to be hermitian if $\psi(x^*)=\overline{\varphi(x)}$ for any $x\in \hat M_*$.

\begin{theorem}\label{thm:preserving_semigroup}
Let $\psi\in \hat M_*$ be $\alpha_0$-invariant hermitian conditionally positive with $\psi(1)=0$, $\varphi_t:=\exp_*(t\psi)$, and $T_t:=(\mathrm{id}\otimes \varphi_t)\hat\delta$ for $t\geq0$. Then $(T_t)_{t\geq0}$ is a center-preserving, $\hat h\hat R$-invariant $CP_0$-semigroup.
\end{theorem}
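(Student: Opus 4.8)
The plan is to establish, one by one, the four assertions packaged into ``center-preserving, $\hat h\hat R$-invariant $CP_0$-semigroup'': (a) $(\varphi_t)_{t\ge 0}$ is a norm-continuous convolution semigroup of states; (b) hence $(T_t)_{t\ge 0}$ is a $CP_0$-semigroup; (c) it preserves $Z(\hat M)$; (d) it is $\hat h\hat R$-invariant. Steps (c) and (d) are quick once (a) is in place, so the bulk of the work is (a), specifically the positivity of $\varphi_t$.

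First I would clear the algebraic and continuity bookkeeping. Since $\hat M_*$ is a Banach algebra with convolution unit $\hat\epsilon$ and $t\psi$, $s\psi$ commute, $\exp_*(t\psi)*\exp_*(s\psi)=\exp_*((t+s)\psi)$, so $\varphi_0=\hat\epsilon$ and $\varphi_t*\varphi_s=\varphi_{t+s}$; also $\|\varphi_t-\hat\epsilon\|\le e^{t\|\psi\|}-1\to 0$, so $(\varphi_t)$ is norm-continuous. From $\psi(1)=0$ and multiplicativity of $\hat\epsilon$ we get $\psi^{*n}(1)=\psi(1)^n=0$ for $n\ge 1$, hence $\varphi_t(1)=1$ for all $t$. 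Coassociativity of $\hat\delta$ gives $\hat\delta T_s=(\mathrm{id}\otimes T_s)\hat\delta$, whence $T_tT_s=(\mathrm{id}\otimes(\varphi_t*\varphi_s))\hat\delta=T_{t+s}$, and the counit identity gives $T_0=(\mathrm{id}\otimes\hat\epsilon)\hat\delta=\mathrm{id}$. Each $T_t$ is unital ($T_t(1)=\varphi_t(1)1=1$) and normal, being the slice of the normal $*$-homomorphism $\hat\delta$ by the normal functional $\varphi_t\in\hat M_*$; and $\|T_t(x)-x\|=\|(\mathrm{id}\otimes(\varphi_t-\hat\epsilon))\hat\delta(x)\|\le\|\varphi_t-\hat\epsilon\|\,\|x\|\to 0$, which is stronger than the continuity at $0$ demanded of a $CP_0$-semigroup.

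The substantive step is that $\varphi_t$ is positive (hence a state), i.e.\ a Schoenberg-type correspondence for the counital dual Woronowicz algebra $\hat{\mathcal M}$. Because $\hat\epsilon$ is multiplicative, $\mathrm{Ker}\,\hat\epsilon$ is a two-sided ideal of $\hat M$, and hermiticity plus conditional positivity of $\psi$ make $(x,y)\mapsto\psi(x^*y)$ a positive semidefinite sesquilinear form on it; completing the quotient yields a Hilbert space $K$ and a linear map $\eta\colon\hat M\to K$ vanishing on $\mathbb{C}1$. The estimate $\psi(x^*a^*ax)\le\|a\|^2\psi(x^*x)$ --- valid because $x^*(\|a\|^21-a^*a)x=(cx)^*(cx)$ with $c=(\|a\|^21-a^*a)^{1/2}$ and $cx\in\mathrm{Ker}\,\hat\epsilon$ whenever $x\in\mathrm{Ker}\,\hat\epsilon$ --- shows that $\rho(a)\eta(x):=\eta(ax)$ is a well-defined contractive unital $*$-representation $\rho\colon\hat M\to B(K)$, and $(\rho,\eta,\psi)$ satisfies the cocycle law $\eta(ab)=\rho(a)\eta(b)+\eta(a)\hat\epsilon(b)$ and $\psi(a^*b)=\langle\eta(a),\eta(b)\rangle+\hat\epsilon(a^*)\psi(b)+\psi(a^*)\hat\epsilon(b)$. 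From such a triple one recovers $\varphi_t=\exp_*(t\psi)$ as a vector state of a second-quantized cocycle flow (Sch\"urmann's reconstruction), so $\varphi_t\ge 0$; alternatively one may invoke the Schoenberg correspondence of \cite{CFK}, \cite{Franz06}. Granting this, $T_t=(\mathrm{id}\otimes\varphi_t)\hat\delta$ is completely positive, being a slice of the $*$-homomorphism $\hat\delta$ by a state, so $(T_t)$ is a $CP_0$-semigroup. I expect this positivity step to be the main obstacle; the rest is formal.

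Center-preservation is immediate: $\psi$ is $\alpha_0$-invariant, so by Corollary \ref{cor:exp_inv} each $\varphi_t=\exp_*(t\psi)$ is $\alpha_0$-invariant, and Lemma \ref{lem:center_preserving} gives $T_t(Z(\hat M))\subseteq Z(\hat M)$. For $\hat h\hat R$-invariance, first note that $\hat h\hat R$ is a right-invariant normal semifinite weight: from $\hat\delta\hat R=\sigma_{\hat M}(\hat R\otimes\hat R)\hat\delta$, $\hat R(1)=1$, and left-invariance of $\hat h$ one computes $(\hat h\hat R\otimes\omega)\hat\delta=\omega(1)\,\hat h\hat R$ for $\omega\in\hat M_*$. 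Applying this with $\omega=\varphi_t$, together with the Fubini identity $(\hat h\hat R)\circ(\mathrm{id}\otimes\varphi_t)=\hat h\hat R\otimes\varphi_t$ on the positive cone (valid since $\hat h\hat R$ is a normal weight and $\varphi_t$ a normal state), gives for $x\in\hat M_+$
\[
\hat h\hat R(T_t(x))=(\hat h\hat R\otimes\varphi_t)(\hat\delta(x))=\varphi_t(1)\,\hat h\hat R(x)=\hat h\hat R(x),
\]
i.e.\ $\hat h\hat R\circ T_t=\hat h\hat R$. This completes the proof.
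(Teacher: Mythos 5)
Your proposal is correct and follows essentially the same route as the paper: the Schoenberg correspondence (cited from \cite{Franz06}) gives that $(\varphi_t)_{t\ge0}$ is a convolution semigroup of states, Corollary \ref{cor:exp_inv} and Lemma \ref{lem:center_preserving} give center-preservation, and the relation $(\hat R\otimes\hat R)\hat\delta=\sigma_{\hat M}\hat\delta\hat R$ together with left-invariance of $\hat h$ gives $\hat h\hat R$-invariance. The only difference is that you spell out the semigroup/continuity bookkeeping and sketch the Sch\"urmann-triple proof of positivity, which the paper delegates entirely to the cited reference.
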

\begin{proof}
By the Schoenberg correspondence (see \cite[Section 1.2]{Franz06}), $(\varphi_t)_{t\geq0}$ is a convolution semigroup of states. Moreover, $\varphi_t$ is $\alpha_0$-invariant for any $t\geq0$ by Corollary \ref{cor:exp_inv}. Thus, $T_t$ preserves the center $Z(\hat M)$ of $\hat M$ by Lemma \ref{lem:center_preserving}. Since $(\hat R\otimes\hat R)\hat\delta=\sigma_{\hat M}\hat\delta\hat R$, we have the right-invariance with respect to $\hat h\hat R$. Namely,
$
\hat h\hat R T_t
=(\hat h\hat R\otimes\varphi_t)\hat\delta
=\hat h \hat R
$ for any $t\geq0$. Moreover, $(T_t)_{t\geq0}$ is a $CP_0$-semigroup since $(\varphi_t)_{t\geq0}$ is weakly continuous.
\end{proof}

\begin{example}
Let $\varphi\in M_*$ be a $\alpha_0$-invariant state. For $c>0$ the linear functional $\psi:=c(\varphi-\hat\epsilon)$ is $\alpha_0$-invariant, hermitian, and conditionally positive with $\psi(1)=0$. Thus, by Theorem \ref{thm:preserving_semigroup}, $\psi$ generates a $CP_0$-semigroup. Then its generator $(\mathrm{id}\otimes \psi)\hat\delta$ is called a \emph{Poisson generator}.
\end{example}

By the example, for any $\alpha_0$-invariant state we can obtain a center-preserving, right Haar weight invariant $CP_0$-semigroup. As shown in the next section, the center of the group von Neumann algebra of the compact quantum group $G$ can be identified with the space $\ell^\infty(\widehat G)$ of bounded functions on the unitary dual $\widehat G$. Therefore, we obtain a Markov semigroup on $\widehat G$.

\section{Markov semigroups on unitary duals of compact quantum groups}\label{sec:cqg}
In this section, we apply the results in above sections to Woronowicz algebras given by compact quantum groups. Then we construct Markov semigroups on unitary duals of compact quantum groups. See Theorem \ref{thm:jun}. We first recall basic facts about compact quantum groups and give two Woronowicz algebras for a compact quantum group. 

Let $G=(C(G), \delta_G)$ be a compact quantum group, i.e., $C(G)$ is a unital $C^*$-algebra and $\delta_G\colon C(G)\to C(G)\otimes C(G)$ is a unital $*$-homomorphism satisfying that
\begin{itemize}
\item $(\delta_G\otimes\mathrm{id})\delta_G=(\mathrm{id}\otimes\delta_G)\delta_G$ as $*$-homomorphism from $C(G)$ to $C(G)\otimes C(G)\otimes C(G)$,
\item $(C(G)\otimes1)\delta_G(C(G)), (1\otimes C(G))\delta_G(C(G))\subset C(G)\otimes C(G)$ are dense,
\end{itemize}
where $\otimes$ denotes the operation of minimal tensor product of $C^*$-algebras. Then there exists a unique state $h_G$ on $C(G)$, called the \emph{Haar state} of $G$, such that 
\[(\omega\otimes h_G)\delta_G=(h_G\otimes\omega)\delta_G=\omega(1)h_G\] 
for any $\omega\in C(G)^*$. See \cite[Theorem 1.2.1]{NeshveyevTuset}. Let $(\pi_{h_G}, L^2(G), \eta_{h_G})$ be the unique GNS-triple associated with $h_G$. Then we denote by $L^\infty(G)$ the von Neumann algebra generated by $\pi_{h_G}(C(G))$. In this paper, we assume that $h_G$ is faithful and identify $C(G)$ with a subalgebra of $B(L^2(G))$. There exists a unitary operator $V_G$ on $L^2(G)\otimes L^2(G)$ such that 
\[V_G(\eta_{h_G}(a)\otimes\xi)=(\pi_{h_G}\otimes\pi_{h_G})(\delta_G(a))\xi_{h_G}\otimes\xi\] 
for any $\xi\in L^2(G)$ and $a\in C(G)$, where $\xi_{h_G}:=\eta_{h_G}(1)$. Then we obtain a comultiplication and Haar state of $L^\infty(G)$ by
\[\delta_G(a):=V_G(a\otimes 1)V_G^*,\quad h_G(a):=\langle a\xi_{h_G}, \xi_{h_G}\rangle\] 
for any $a\in L^\infty(G)$, respectively. We remark that $\delta_G$ and $h_G$ are the extensions of comultiplication and Haar state of $C(G)$, respectively. Thus, we use the same notations.

Let $H$ be a finite-dimensional Hilbert space and $U$ a \emph{unitary corepresentation} of $G$ on $H$, i.e., $U$ is a unitary element in $B(H)\otimes C(G)$ satisfying that $(\mathrm{id}\otimes \delta_G)(U)=U_{12}U_{13}$. For any linear functional $\varphi$ on $B(H)$ we call $(\varphi\otimes\mathrm{id})(U)$ a matrix coefficient of $U$. We denote by $A(G)$ the linear subspace of $C(G)$ generated by all matrix coefficients of finite-dimensional unitary corepresentations of $G$. Then $A(G)$ becomes $*$-subalgebra of $C(G)$. In this paper, we assume that $C(G)$ coincides with the universal $C^*$-algebra generated by $A(G)$. It is known that $A(G)$ has an $h_G$-invariant one-parameter automorphism group $\tau=(\tau_t)_{t\in\mathbb{R}}$ and a $h_G$-invariant unitary antipode $R\colon A(G)\to A(G)$. See \cite[Section 1.6, 1.7]{NeshveyevTuset}. Thus, there exist unitary operators $U_t$ for any $t\in \mathbb{R}$ and a conjugate linear operator $\hat J$ on $L^2(G)$ satisfying
\[U_t\eta_{h_G}(a)=\eta_{h_G}(\tau_t(a)),\quad \hat J\eta_{h_G}(a)=\eta_{h_G}(R(a))\]
for any $a\in A(G)$. Therefore, $L^\infty(G)$ has an one-parameter automorphism group $\tau^G=(\tau^G_t)_{t\in\mathbb{R}}$ and a unitary antipode $R_G\colon L^\infty(G)\to L^\infty(G)$ defined by
\[\tau^G_t(x):=U_txU_t^*,\quad R_G(x):=\hat Jx^*\hat J\]
for any $x\in L^\infty(G)$. Then $\mathcal{L}^\infty(G)=(L^\infty(G), \delta_G, R_G, \tau^G, h_G)$ becomes a Woronowicz algebra and $V_G$ is its multiplicative unitary. See \cite[Section 5]{MasudaNakagami} for more details. We denote by $\mathcal{W}^*(G)=(W^*(G), \hat\delta_G, \hat R_G, \hat\tau_G, \hat h_G)$ the dual Woronowicz algebra. Remark that $\mathcal{W}^*(G)$ has a counit, denoted by $\hat\epsilon_G$, since $h_G$ is bounded. 

Let $\widehat G$ be the set of all equivalence classes of irreducible unitary correpresentations of $G$. For any $\alpha\in\widehat G$ we fix a representative $(U_\alpha, H_\alpha)$ of $\alpha$. We denote by $\{f^G_z\}_{z\in\mathbb{C}}$ the \emph{Woronowicz characters}, which are distinguished multiplicative states on $A(G)$. See \cite[Section 1.7]{NeshveyevTuset}. We define $F_\alpha^z:=(\mathrm{id}\otimes f^G_z)(U_\alpha)$ for any $z\in \mathbb{C}$ and $d_q(\alpha):=\mathrm{Tr}_{H_\alpha}(F_\alpha)$. Then there exists a $*$-isomorphism
$\Phi_G\colon W^*(G)\cong \ell^\infty\mathchar`-\bigoplus_{\alpha\in \widehat G}B(H_\alpha)$.
Moreover, the deformation automorphism group $\hat\tau^G$ on $W^*(G)$ is given by 
$\hat\tau^G_t=\Phi_G^{-1}\circ(\prod_{\alpha\in\widehat G}\mathrm{Ad}F^{\mathrm{i}t})\circ\Phi_G$
for any $t\in\mathbb{R}$.

The following is our formulation of quantization of characters in \cite{Sato1}.
\begin{definition}
A normal $\hat\tau^G$-KMS (Kubo--Martin--Schwinger) state $\chi$ on $W^*(G)$ with inverse temperature -1 is called a \emph{quantized character} of $G$. Let $\mathrm{Ch}(G)$ be the set of all quantized characters of $G$.
\end{definition}

\begin{remark}
The $\mathrm{Ch}(G)$ is a convex set and its extreme points $\mathrm{ex}(\mathrm{Ch}(G))$ coincides with $\widehat G$ as sets. More precisely, for any $\alpha\in\widehat G$ the corresponding extreme quantized character $\chi_\alpha$ is given by $\chi_\alpha(x)=\mathrm{Tr}_{H_\alpha}(F_\alpha x_\alpha)/d_q(\alpha)$, where $x=\Phi_G^{-1}((x_\beta)_{\beta\in\widehat G})\in W^*(G)$. For the trivial corepresentation $U_{tr}=1\otimes1_{C(G)}$ we have the associated quantized character $\chi_{tr}$ and $\chi_{tr}=\hat\epsilon_G$. Moreover, every quantized character is a convex combination of extreme ones (see \cite[Lemma 2.2]{Sato1}). Thus, there is an affine bijection between $\mathrm{Ch}(G)$ and $\mathcal{M}_p(\widehat G)$.
\end{remark}

 
Recall that the right action $\alpha_0\colon W^*(G)\to W^*(G)\bar\otimes L^\infty(G)$ of $\mathcal{L}^\infty(G)$ on $W^*(G)$ is defined by $\alpha_0(x):=V_G(x\otimes 1)V_G^*$ for any $x\in W^*(G)$. Then we obtain the following lemma by \cite[Lemma 2.1.(4)]{Izumi02}.
 \begin{lemma}\label{lem:char_inv}
 Any quantized character $\chi\in \mathrm{Ch}(G)$ is $\alpha_0$-invariant.
 \end{lemma}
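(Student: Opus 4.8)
The plan is to verify that a quantized character $\chi \in \mathrm{Ch}(G)$ satisfies $(\chi \otimes \mathrm{id})(\alpha_0(x)) = \chi(x)1$ for all $x \in W^*(G)$, and the key is to exploit the explicit description of $\alpha_0(x) = V_G(x \otimes 1)V_G^*$ together with the KMS property of $\chi$ with respect to $\hat\tau^G$ at inverse temperature $-1$. First I would reduce to the extreme case: since every quantized character is a convex combination of the $\chi_\alpha$ (as recalled in the remark after the definition), and $\alpha_0$-invariance is clearly preserved under convex combinations, it suffices to prove the statement for each extreme quantized character $\chi_\alpha(x) = \mathrm{Tr}_{H_\alpha}(F_\alpha x_\alpha)/d_q(\alpha)$.

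Next I would invoke \cite[Lemma 2.1.(4)]{Izumi02}, which the text explicitly points to: that lemma characterizes, in the setting of a compact (quantum) group and the canonical action on the dual, exactly which functionals on $W^*(G)$ are invariant under $\alpha_0 = \mathrm{Ad}\,V_G$, and identifies them as precisely the normal $\hat\tau^G$-KMS states with inverse temperature $-1$ (equivalently, the functionals of the form $\mathrm{Tr}(F_\alpha\,\cdot\,)/d_q(\alpha)$ on each block). So the proof is essentially a matter of matching definitions: a quantized character is by definition such a KMS state, hence falls under the hypothesis of Izumi's lemma, hence is $\alpha_0$-invariant. Concretely, one computes $(\chi \otimes \mathrm{id})(V_G(x\otimes 1)V_G^*)$ block by block under the isomorphism $\Phi_G$, using that $V_G$ decomposes according to the corepresentations $U_\beta$ and that the partial trace against $F_\alpha$ on the $\alpha$-block is a trace on $B(H_\alpha)$ twisted exactly so as to absorb the conjugation by the relevant part of $V_G$; the KMS condition is what makes this twisted trace invariant.

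The main obstacle I anticipate is purely bookkeeping rather than conceptual: one must be careful that the conventions for $V_G$, $\hat\delta_G$, the Woronowicz characters $f^G_z$, and the sign of the inverse temperature match those in \cite{Izumi02}, since a mismatch would flip $F_\alpha$ to $F_\alpha^{-1}$ and $\alpha_0$-invariance would fail for the wrong functional. Assuming the conventions are as fixed in the excerpt — $\alpha_0(x) = V_G(x\otimes 1)V_G^*$, $\chi$ a $\hat\tau^G$-KMS state at inverse temperature $-1$, and $\hat\tau^G_t = \Phi_G^{-1}\circ(\prod_\alpha \mathrm{Ad}\,F^{\mathrm{i}t})\circ\Phi_G$ — the computation closes cleanly, and the lemma follows directly from \cite[Lemma 2.1.(4)]{Izumi02} with no further work needed.
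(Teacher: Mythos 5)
Your proposal is correct and follows essentially the same route as the paper, whose entire proof is the citation of \cite[Lemma 2.1.(4)]{Izumi02} identifying the $\alpha_0$-invariant normal states with the $\hat\tau^G$-KMS states at inverse temperature $-1$ (equivalently the quantum-trace functionals $\mathrm{Tr}_{H_\alpha}(F_\alpha\,\cdot\,)/d_q(\alpha)$ blockwise). Your extra reduction to extreme quantized characters and the remarks on matching conventions are harmless additions, not a different argument.
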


\begin{corollary}\label{cor:acid}
Let $\chi\in \mathrm{Ch}(G)$ and $\varphi_t:=\exp_*(t(\chi-\hat\epsilon_G))$ for any $t\geq0$. Then the $CP_0$-semigroup $(T_t)_{t\geq0}$ defined by $T_t:=(\mathrm{id}\otimes\varphi_t)\hat\delta_G$ is center-preserving and $\hat h_G\hat R_G$-invariant.
\end{corollary}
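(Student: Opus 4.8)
The plan is to verify that the functional $\psi:=\chi-\hat\epsilon_G$ satisfies the hypotheses of Theorem \ref{thm:preserving_semigroup}, and then simply invoke that theorem. Concretely, I must check four things: that $\psi\in W^*(G)_*$ (i.e. it is normal), that $\psi$ is $\alpha_0$-invariant, that $\psi$ is hermitian, and that $\psi$ is conditionally positive with $\psi(1)=0$.

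First I would note normality: a quantized character $\chi$ is by definition a \emph{normal} state on $W^*(G)$, and $\hat\epsilon_G$ is normal (it was constructed as $\hat\epsilon(x)=\langle x\eta_h(1),\eta_h(1)\rangle$), so $\psi$ lies in $W^*(G)_*$. Next, $\alpha_0$-invariance of $\psi$ is immediate from Lemma \ref{lem:char_inv} (which gives $\alpha_0$-invariance of $\chi$) together with Lemma \ref{lem:counit_inv} (which gives $\alpha_0$-invariance of $\hat\epsilon_G$), since the set of $\alpha_0$-invariant functionals is a linear subspace. For the hermitian property: $\chi$ is a state, hence $\chi(x^*)=\overline{\chi(x)}$, and likewise $\hat\epsilon_G$ is a state, so $\psi(x^*)=\overline{\psi(x)}$ for all $x\in W^*(G)$. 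Finally, $\psi(1)=\chi(1)-\hat\epsilon_G(1)=1-1=0$, and conditional positivity holds because for $x\in\mathrm{Ker}\,\hat\epsilon_G$ we have $\psi(x^*x)=\chi(x^*x)-\hat\epsilon_G(x^*x)=\chi(x^*x)-|\hat\epsilon_G(x)|^2=\chi(x^*x)\geq0$ using that $\hat\epsilon_G$ is multiplicative and a state; here $\chi(x^*x)\geq0$ since $\chi$ is a positive functional. This is exactly the situation of the Example following Theorem \ref{thm:preserving_semigroup} with $\varphi=\chi$ and $c=1$.

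With $\psi$ verified to be $\alpha_0$-invariant, hermitian, conditionally positive, and satisfying $\psi(1)=0$, Theorem \ref{thm:preserving_semigroup} applies verbatim: the family $\varphi_t=\exp_*(t\psi)=\exp_*(t(\chi-\hat\epsilon_G))$ defines a convolution semigroup of states (via the Schoenberg correspondence), each $\varphi_t$ is $\alpha_0$-invariant by Corollary \ref{cor:exp_inv}, and hence $T_t=(\mathrm{id}\otimes\varphi_t)\hat\delta_G$ is a center-preserving $CP_0$-semigroup by Lemma \ref{lem:center_preserving}, which is $\hat h_G\hat R_G$-invariant by the computation $\hat h_G\hat R_G T_t=(\hat h_G\hat R_G\otimes\varphi_t)\hat\delta_G=\hat h_G\hat R_G$ using $(\hat R_G\otimes\hat R_G)\hat\delta_G=\sigma_{W^*(G)}\hat\delta_G\hat R_G$.

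I do not expect any genuine obstacle here; the corollary is essentially a direct specialization of Theorem \ref{thm:preserving_semigroup} to the Woronowicz algebra $\mathcal{W}^*(G)$ coming from a compact quantum group, with the role of the abstract $\alpha_0$-invariant state in the Example played by the quantized character $\chi$. The only point requiring a (trivial) remark is that $\chi-\hat\epsilon_G$ is indeed normal and that $\hat\epsilon_G=\chi_{tr}$ is itself $\alpha_0$-invariant, both of which are already recorded earlier in the section. So the proof is a one-line appeal to the Example and Theorem \ref{thm:preserving_semigroup}.
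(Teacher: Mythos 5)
Your proof is correct and is essentially the paper's own argument: the paper proves the corollary by citing Lemma \ref{lem:char_inv}, Lemma \ref{lem:counit_inv} and Theorem \ref{thm:preserving_semigroup}, which is exactly your route. The routine verifications you spell out (normality, linearity of $\alpha_0$-invariance, hermiticity, $\psi(1)=0$, and conditional positivity via multiplicativity of $\hat\epsilon_G$) are precisely what the Example following Theorem \ref{thm:preserving_semigroup} records, so there is nothing to add.
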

\begin{proof}
It follows from Lemma \ref{lem:char_inv}, Lemma \ref{lem:counit_inv} and Theorem \ref{thm:preserving_semigroup}.
\end{proof}

This corollary induces Markov dynamics on $\widehat G$. The following gives another algebraic interpretation of those dynamics on $\widehat G$. See also Proposition \ref{prop:dynamics_char}.

\begin{lemma}\label{lem:yosa}
Let $\chi\in \mathrm{Ch}(G)$ and $\varphi_t:=\exp_*(t(\chi-\hat\epsilon_G))$ for any $t\geq0$. Then $\varphi_t\in\mathrm{Ch}(G)$.
\end{lemma}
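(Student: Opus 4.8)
The plan is to show that $\varphi_t = \exp_*(t(\chi - \hat\epsilon_G))$ is again a normal $\hat\tau^G$-KMS state on $W^*(G)$ with inverse temperature $-1$, for each $t \geq 0$. We already know from the Schoenberg correspondence (as invoked in the proof of Theorem \ref{thm:preserving_semigroup}) that $(\varphi_t)_{t\geq0}$ is a convolution semigroup of \emph{states}; normality is inherited because each $\varphi^{*n} = (\varphi^{\otimes n})\hat\delta_G^{(n)}$ is a composition of normal maps and the series converges in norm in $\hat M_*$. So the only substantive point is the KMS condition. I would work through the explicit picture: under the isomorphism $\Phi_G\colon W^*(G) \cong \ell^\infty\text{-}\bigoplus_{\alpha} B(H_\alpha)$, a normal functional $\psi$ is determined by a family of trace-class operators $(\psi_\alpha)_\alpha$ with $\psi(x) = \sum_\alpha \mathrm{Tr}_{H_\alpha}(\psi_\alpha x_\alpha)$, and the $\hat\tau^G$-KMS condition with inverse temperature $-1$ is equivalent to $\psi$ being $\hat\tau^G$-invariant together with the quantized-character trace identity $\psi(xy) = \psi(y\, \hat\tau^G_{-\mathrm i}(x))$ — concretely $\psi_\alpha = F_\alpha \cdot (\text{positive operator commuting with }F_\alpha)$ up to the normalization encoded in $d_q$.

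Next I would verify that the KMS property is preserved under convolution and under the $\exp_*$ limit. The key algebraic fact is that $\hat\tau^G$ is a coautomorphism, i.e.\ $(\hat\tau^G_t \otimes \hat\tau^G_t)\hat\delta_G = \hat\delta_G\hat\tau^G_t$ (this is part of the Woronowicz-algebra axioms for $\hat{\mathcal M}$), so if $\chi$ and $\phi$ are both $\hat\tau^G$-KMS states at inverse temperature $-1$ then for the convolution $\chi * \phi = (\chi \otimes \phi)\hat\delta_G$ one gets, using the coautomorphism property to push $\hat\tau^G_{-\mathrm i}$ through $\hat\delta_G$ and then applying the KMS condition in each tensor leg,
\[
(\chi*\phi)(xy) = (\chi\otimes\phi)\big(\hat\delta_G(x)\hat\delta_G(y)\big) = (\chi\otimes\phi)\big(\hat\delta_G(y)\,(\hat\tau^G_{-\mathrm i}\otimes\hat\tau^G_{-\mathrm i})(\hat\delta_G(x))\big) = (\chi*\phi)\big(y\,\hat\tau^G_{-\mathrm i}(x)\big),
\]
so $\chi*\phi \in \mathrm{Ch}(G)$ as well. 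Since $\hat\epsilon_G = \chi_{tr} \in \mathrm{Ch}(G)$, all the partial sums $\sum_{n=0}^N \varphi^{*n}/n!$ (with $\varphi = t(\chi - \hat\epsilon_G)$) are — after renormalizing, since they are already known to be positive with mass $1$ — convex-type combinations built from $\hat\tau^G$-KMS functionals, hence satisfy the same trace identity and are $\hat\tau^G$-invariant; one has to be a little careful since $\varphi$ itself is not a state, but the point is that $\mathrm{Ch}(G)$, or rather the real-linear span of it consisting of $\hat\tau^G$-invariant functionals satisfying the $(-1)$-KMS twist identity, is a norm-closed subspace of $\hat M_*$, and $\varphi_t$ lands in it because each $\varphi^{*n}$ does (by the convolution computation above applied to signed combinations) and the subspace is norm-closed. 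Then $\varphi_t$ is a state by Schoenberg, it lies in that subspace, hence $\varphi_t \in \mathrm{Ch}(G)$.

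Alternatively — and this may be the cleanest route — one can use the analyticity of $z \mapsto \hat\tau^G_z$ on the relevant dense domain together with the fact, already recorded in the excerpt, that $\hat\tau^G_t = \Phi_G^{-1}\circ(\prod_\alpha \mathrm{Ad}\,F^{\mathrm i t})\circ\Phi_G$, so that the KMS condition at inverse temperature $-1$ is literally the statement $\chi(x) = \chi\big(\hat\tau^G_{-\mathrm i/2}(x^*)^* \cdot (\text{fixed element})\big)$; pushing everything through $\hat\delta_G$ using the coautomorphism identity reduces the claim for $\varphi_t$ to the claim for $\chi$ and $\hat\epsilon_G$ leg by leg. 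The \textbf{main obstacle} I anticipate is the bookkeeping around the non-unital, non-positive functional $\chi - \hat\epsilon_G$: the convolution-preservation argument is transparent for genuine states, but one must check that the twist identity $\psi(xy) = \psi(y\,\hat\tau^G_{-\mathrm i}(x))$ (for $x$ in the analytic domain) passes to signed finite linear combinations of convolution powers and then survives the norm limit, which requires knowing that both sides of the identity are norm-continuous in $\psi$ on the appropriate dense domain of $x, y$ — this is where the boundedness of $h_G$ (hence the existence and good behaviour of $\hat\epsilon_G$ and the counit) and the standard estimates for the Woronowicz characters $f^G_z$ from \cite[Section 1.7]{NeshveyevTuset} get used.
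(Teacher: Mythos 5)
Your proposal is correct in outline and shares the paper's skeleton (statehood of $\varphi_t$ via Schoenberg, reduction to the convolution powers $\chi^{*k}$ through the binomial expansion $(\chi-\hat\epsilon_G)^{*n}=\sum_k(-1)^k\binom{n}{k}\chi^{*k}$, then a passage to the limit), but it implements the two key steps differently. For the convolution stability you re-derive, via $(\hat\tau^G_t\otimes\hat\tau^G_t)\hat\delta_G=\hat\delta_G\hat\tau^G_t$ and the fact that a tensor product of KMS states is KMS for the tensor flow, what the paper simply cites as \cite[Lemma 3.2]{Sato3}; that is legitimate and makes the argument more self-contained. For the limiting step the paper stays with the strip formulation of the KMS condition: it takes the interpolating functions $F_k$ for $\chi^{*k}$, uses the uniform bound $|F_k|\leq\|x\|\|y\|$ from \cite[Proposition 5.3.7]{BratteliRobinson2} to sum the series $\sum_n\frac{t^n}{n!}\sum_k(-1)^k\binom{n}{k}F_k$ uniformly on the closed strip, and reads off the interpolating function for $\varphi_t$ directly. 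You instead linearize the KMS condition into the twist identity $\psi(xy)=\psi(y\,\hat\tau^G_{-\mathrm{i}}(x))$ for entire analytic $x$, observe that each $\varphi^{*n}$ satisfies it (being a signed combination of the KMS states $\chi^{*k}$, including $\chi^{*0}=\hat\epsilon_G$), and conclude for $\varphi_t$ because the set of normal functionals satisfying these (norm-continuous, linear) constraints is norm-closed in $W^*(G)_*$; combined with Schoenberg this gives a normal KMS state. This works, provided you invoke explicitly the standard equivalence, for normal states and $\sigma$-weakly continuous flows, between the KMS condition and the twisted trace identity on a dense $*$-algebra of analytic elements (this is exactly where \cite[Proposition 5.3.7]{BratteliRobinson2} re-enters, so the two routes are cousins rather than strangers). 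Two small repairs: your worry about positivity of the partial sums is moot, since positivity is needed and obtained only in the limit via Schoenberg, not termwise; and the parenthetical description of the densities, $\psi_\alpha=F_\alpha\cdot(\text{positive operator commuting with }F_\alpha)$, is wrong as stated — since each $B(H_\alpha)$ is a factor, the KMS condition for $\mathrm{Ad}F_\alpha^{\mathrm{i}t}$ at inverse temperature $-1$ forces the density on each block to be a nonnegative scalar multiple of $F_\alpha$ — but this aside is not used in your actual argument.
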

\begin{proof}
We fix two elements $x, y\in W^*(G)$. Since $\chi^{*k}$ is a $\hat\tau^G$-KMS state for any $k\geq0$ (see \cite[Lemma 3.2]{Sato3}), there exists a bonded continuous function $F_k$ on $\bar{D}=\{z\in\mathbb{C}\mid -1\leq\mathrm{Im}(z)\leq 0\}$, which is analytic in $D=\{z\in\mathbb{C}\mid -1<\mathrm{Im}(z)<0\}$ such that for any $s\in\mathbb{R}$
\[F_k(s)=\chi^{*k}(x\hat\tau^G_s(y)),\quad F_k(s-\mathrm{i})=\chi^{*k}(\hat\tau^G_s(y)x).\]
By \cite[Proposition 5.3.7]{BratteliRobinson2}, we have $|F_k(z)|<\|x\|\|y\|$ on  $\bar{D}$. Thus,
\[F:=\sum_{n=0}^\infty\frac{t^n}{n!}\sum_{k=0}^n(-1)^k\binom{n}{k}F_k.\]
converges uniformly in $\bar{D}$. Then $F$ satisfies that $|F|\leq e^{2t}\|x\|\|y\|$ on $\bar{D}$ and analytic in $D$. Moreover, since $(\chi-\hat\epsilon_G)^{*n}=\sum_{k=0}^n(-1)^k\binom{n}{k}\chi^{*k}$, we have
\[F(s)=\varphi_t(x\hat\tau^G_s(y)),\quad F(s-\mathrm{i})=\varphi_t(\hat\tau^G_s(y)x).\]
Namely, $\varphi_t$ is a normal $\hat\tau^G$-KMS state, i.e., $\varphi_t\in\mathrm{Ch}(G)$.
\end{proof}

We define $\pi_G\colon L^\infty(G)_*\to W^*(G)$ by $\pi_G(\omega)=(\mathrm{id}\otimes\omega)(V_G)$ for any $\omega\in L^\infty(G)_*$. Then for any $\varphi\in W^*(G)_*$ we define $\pi_{G*}(\varphi)\in L^\infty(G)$ by $\omega(\pi_{G*}(\varphi))=\varphi(\pi_G(\omega))$ for any $\omega\in L^\infty(G)$. Namely, $\pi_{G*}(\varphi)=(\varphi\otimes\mathrm{id})(V_G)$. Since $\pi_{G*}\colon W^*(G)_*\to L^\infty(G)$ is a homomorphism, we have the following lemma.
\begin{lemma}\label{lem:tutumi}
For any $\psi\in W^*(G)_*$ we have $\pi_{G*}({\exp_*(\psi)})=e^{\pi_{G*}(\psi)}=\sum_{n=0}^\infty\pi_{G*}(\psi)^n/n!$. 
\end{lemma}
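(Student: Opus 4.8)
The plan is to exploit two facts already in hand: that $\pi_{G*}$ is multiplicative for the convolution product (noted just above the statement) and that $\exp_*(\psi)$ is by definition the limit in the Banach space $W^*(G)_*$ of the partial sums $\sum_{n=0}^N \psi^{*n}/n!$. The only extra ingredient needed is that $\pi_{G*}$ is bounded, which is immediate from unitarity of $V_G$: for all $\varphi\in W^*(G)_*$ one has $\|\pi_{G*}(\varphi)\|=\|(\varphi\otimes\mathrm{id})(V_G)\|\le\|\varphi\|$.

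First I would use that $\pi_{G*}$ is linear and norm-continuous to interchange it with the norm-convergent series, obtaining
\[\pi_{G*}(\exp_*(\psi))=\sum_{n=0}^\infty\frac{\pi_{G*}(\psi^{*n})}{n!},\]
where the right-hand series converges absolutely in the norm-complete space $L^\infty(G)$ because $\sum_n\|\pi_{G*}(\psi)\|^n/n!\le e^{\|\psi\|}<\infty$. Next, from $\pi_{G*}(\varphi*\phi)=\pi_{G*}(\varphi)\pi_{G*}(\phi)$ a routine induction gives $\pi_{G*}(\psi^{*n})=\pi_{G*}(\psi)^n$ for every $n\ge1$.

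It remains to treat the $n=0$ term, i.e. to verify $\pi_{G*}(\psi^{*0})=\pi_{G*}(\hat\epsilon_G)=1$, matching $\pi_{G*}(\psi)^0=1$. This follows from the defining property $V_G(\xi_{h_G}\otimes\xi)=\xi_{h_G}\otimes\xi$ of the multiplicative unitary (already used in the proof of Lemma \ref{lem:counit_inv}) together with $\hat\epsilon_G(x)=\langle x\xi_{h_G},\xi_{h_G}\rangle$: for any $\xi,\eta\in L^2(G)$,
\[\langle(\hat\epsilon_G\otimes\mathrm{id})(V_G)\xi,\eta\rangle=\langle V_G(\xi_{h_G}\otimes\xi),\xi_{h_G}\otimes\eta\rangle=\langle\xi_{h_G}\otimes\xi,\xi_{h_G}\otimes\eta\rangle=\langle\xi,\eta\rangle,\]
so $(\hat\epsilon_G\otimes\mathrm{id})(V_G)=1$. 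Assembling the three steps yields $\pi_{G*}(\exp_*(\psi))=\sum_{n=0}^\infty\pi_{G*}(\psi)^n/n!=e^{\pi_{G*}(\psi)}$.

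There is no genuine obstacle here; the only points requiring a line of care are justifying that the homomorphism property survives the passage to the infinite convolution exponential (handled by boundedness of $\pi_{G*}$ and absolute convergence) and confirming that the degree-zero term equals the scalar $1$, which is exactly the computation displayed above.
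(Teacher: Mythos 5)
Your argument is correct and is essentially the paper's own route: the paper derives the lemma directly from the fact that $\pi_{G*}$ is a homomorphism for the convolution product, leaving the routine points implicit. You have merely made those points explicit (norm-boundedness $\|\pi_{G*}(\varphi)\|\le\|\varphi\|$ to pass to the limit of the partial sums, induction for $\pi_{G*}(\psi^{*n})=\pi_{G*}(\psi)^n$, and the computation $(\hat\epsilon_G\otimes\mathrm{id})(V_G)=1$ from $V_G(\xi_{h_G}\otimes\xi)=\xi_{h_G}\otimes\xi$), all of which are accurate.
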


Let $T_\varphi:=(\mathrm{id}\otimes\varphi)\hat\delta_G$ for any $\varphi\in W^*(G)_*$. For any $\omega\in L^\infty(G)_*$ and $a\in L^\infty(G)$ we define $a\omega, \omega a\in L^\infty(G)_*$ by $[a\omega](b):=\omega(ba)$ and $[\omega a](b):=\omega(ab)$ for any $b\in L^\infty(G)$.
\begin{lemma}\label{lem:meg}
For any $\varphi\in W^*(G)_*$ and $\omega\in L^\infty(G)_*$ we have $T_\varphi(\pi_G(\omega))=\pi_G(\omega\pi_{G*}(\varphi))$.
\end{lemma}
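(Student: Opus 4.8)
The plan is to rewrite both sides in terms of the multiplicative unitary $V_G$ and to reduce the identity to the pentagon relation $(V_G)_{23}(V_G)_{12}=(V_G)_{12}(V_G)_{13}(V_G)_{23}$. First I would unfold the left-hand side using $T_\varphi=(\mathrm{id}\otimes\varphi)\hat\delta_G$, $\hat\delta_G(x)=V_G^*(1\otimes x)V_G$ and $\pi_G(\omega)=(\mathrm{id}\otimes\omega)(V_G)$. Working in $B(L^2(G))^{\bar\otimes 3}$ with the standard leg numbering, the factor $1\otimes\pi_G(\omega)$ equals $(\mathrm{id}\otimes\mathrm{id}\otimes\omega)((V_G)_{23})$ (the copy of $V_G$ defining $\pi_G(\omega)$ placed in the last two legs, with $\omega$ slicing the third leg), while the outer copies of $V_G$ coming from $\hat\delta_G$ sit in the first two legs; hence
\[
\hat\delta_G(\pi_G(\omega))=(\mathrm{id}\otimes\mathrm{id}\otimes\omega)\bigl((V_G)_{12}^*(V_G)_{23}(V_G)_{12}\bigr).
\]

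The next step is the pentagon relation, which rearranges to $(V_G)_{12}^*(V_G)_{23}(V_G)_{12}=(V_G)_{13}(V_G)_{23}$ (equivalently, this is the identity $(\hat\delta_G\otimes\mathrm{id})(V_G)=(V_G)_{13}(V_G)_{23}$), so that
\[
\hat\delta_G(\pi_G(\omega))=(\mathrm{id}\otimes\mathrm{id}\otimes\omega)\bigl((V_G)_{13}(V_G)_{23}\bigr).
\]
Now I would apply the slice $\varphi$ on the second leg. Since $(V_G)_{13}$ acts trivially on that leg, slicing $(V_G)_{13}(V_G)_{23}$ against $\varphi$ there multiplies the third leg of $(V_G)_{13}$ on the right by $(\varphi\otimes\mathrm{id})(V_G)=\pi_{G*}(\varphi)$, giving $T_\varphi(\pi_G(\omega))=(\mathrm{id}\otimes\omega)\bigl(V_G\,(1\otimes\pi_{G*}(\varphi))\bigr)$. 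Comparing this with $\pi_G(\omega\pi_{G*}(\varphi))=(\mathrm{id}\otimes(\omega\pi_{G*}(\varphi)))(V_G)$ and unwinding the definition of the $L^\infty(G)$-module action $\omega\mapsto\omega\pi_{G*}(\varphi)$ yields the claim.

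The conceptual content is entirely the single use of the pentagon relation; the part that needs care is the leg-numbering bookkeeping — keeping straight that $\pi_G$ slices the $L^\infty(G)$-leg of $V_G$ while $\pi_{G*}$ slices the $W^*(G)$-leg, placing the auxiliary copy of $V_G$ in the correct two legs when expanding $1\otimes\pi_G(\omega)$, checking that slicing against $\varphi$ and against $\omega$ commutes past the legs on which the relevant factors act trivially, and tracking the side on which $\pi_{G*}(\varphi)$ multiplies so that the final expression matches $\pi_G(\omega\pi_{G*}(\varphi))$ on the nose.
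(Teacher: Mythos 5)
Your proof is essentially the paper's: both reduce the claim to $(\hat\delta_G\otimes\mathrm{id})(V_G)=V_{G13}V_{G23}$ (the rearranged pentagon relation, equivalently $V_{G12}^*V_{G23}V_{G12}=V_{G13}V_{G23}$ applied under $\hat\delta_G(x)=V_G^*(1\otimes x)V_G$) and then slice with $\varphi$ on the middle leg, arriving at $(\mathrm{id}\otimes\omega)\bigl(V_G(1\otimes\pi_{G*}(\varphi))\bigr)$, which is exactly the paper's intermediate expression. The only caveat is the final identification you leave to ``unwinding the module action'': with the paper's stated convention $[\omega a](b)=\omega(ab)$ that expression reads literally as $\pi_G(\pi_{G*}(\varphi)\omega)$ rather than $\pi_G(\omega\pi_{G*}(\varphi))$ -- a sidedness/notation slip already present in the paper's own one-line proof, and immaterial for the way the lemma is used afterwards.
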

\begin{proof}
By the pentagonal relation of $V_G$, we have $(\hat\delta_G\otimes\mathrm{id})(V_G)=V_{G13}V_{G23}$. Thus,
\[T_\varphi(\pi_G(\omega))=(\mathrm{id}\otimes\varphi\otimes\omega)(V_{G13}V_{G23})=(\mathrm{id}\otimes\omega)(1\otimes\pi_{G*}(\varphi)V_G)=\pi_G(\omega\pi_{G*}(\varphi)).\]
\end{proof}

By Corollary \ref{cor:acid}, for any $\chi\in\mathrm{Ch}(G)$ we obtain the center-preserving $CP_0$-semigroup on $W^*(G)$. Now we study the restriction of such $CP_0$-semigroups to the center $Z(W^*(G))$. For any $\xi\in L^2(G)$ and $\alpha\in\widehat G$ we define a linear operator $\hat\xi(\alpha)$ on $H_\alpha$ by $\hat\xi(\alpha):=(\mathrm{id}\otimes\omega_\xi)(U_\alpha)^*$, where $\omega_\xi\in C(G)^*$ is defined by $\omega_\xi(a):=\langle \eta_G(a), \xi\rangle$ for any $a\in C(G)$. Then the Peter--Weyl type theorem (see Theorem \ref{thm:PW}) states that for any $\xi, \eta\in L^2(G)$
\[\langle\xi, \eta\rangle=\sum_{\alpha\in\widehat G}d_q(\alpha)\mathrm{Tr}_{H_\alpha}(F_\alpha\hat\eta(\alpha)^*\hat\xi(\alpha)).\]
Moreover, there exists a $*$-isomorphism $\Psi_G\colon \ell^\infty(\widehat G)\to Z(W^*(G))$ such that
\[\langle\Psi_G(\kappa)\xi,\eta\rangle=\sum_{\alpha\in\widehat G}d_q(\alpha)\kappa(\alpha)\mathrm{Tr}_{H_\alpha}(F_\alpha\hat\eta(\alpha)^*\hat\xi(\alpha))\]
for any $\xi, \eta\in L^2(G)$ and $\kappa\in\ell^\infty(\widehat G)$.

By Lemma \ref{lem:center_preserving}, the linear map $T_\varphi:=(\mathrm{id}\otimes\varphi)\hat\delta_G$ preserves $Z(W^*(G))$ if $\varphi$ is a $\alpha_0$-invariant state. Thus, we have a Markov operator $Q_\varphi:=\Psi_G^{-1}T_\varphi|_{Z(W^*(G))}\Psi_G$ on $\ell^\infty(\widehat G)$. Then we define the mapping $P\in\mathcal{M}_p(\widehat G)\to PQ_\varphi\in\mathcal{M}_p(\widehat G)$ by $(PQ_\varphi, \kappa)=(P, Q_\varphi\kappa)$ for any $\kappa\in\ell^\infty(\widehat G)$, where $(\,\cdot\,,\,\cdot\,)$ is the natural pairing of $\mathcal{M}_p(\widehat G)$ and $\ell^\infty(\widehat G)$. Recall that there exists an affine homeomorphism from $\mathcal{M}_p(\widehat G)$ to $\mathrm{Ch}(G)$ by irreducible decomposition. More precisely, for any $P\in\mathcal{M}_p(\widehat G)$ the corresponding quantized character $\chi_P\in\mathrm{Ch(G)}$ is given as $\chi_P=\sum_{\alpha\in\widehat G}P(\alpha)\chi_\alpha$. The following describes the dynamics $Q_\varphi$ on $\widehat G$ in terms of quantized characters.

\begin{proposition}\label{prop:dynamics_char}
Let $\varphi\in W^*(G)_*$ be a $\alpha_0$-invariant state and $P\in\mathcal{M}_p(\widehat G)$. Then $\chi_{PQ_\varphi}=\chi_P*\varphi$.
\end{proposition}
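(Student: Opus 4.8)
The plan is to unwind both sides of the claimed identity $\chi_{PQ_\varphi} = \chi_P * \varphi$ by pairing them against the isomorphisms $\Psi_G$ and $\Phi_G$, reducing everything to a computation with the comultiplication $\hat\delta_G$ on $W^*(G)$. First I would recall that the affine bijection $\mathcal{M}_p(\widehat G) \to \mathrm{Ch}(G)$ sending $P \mapsto \chi_P$ can be described via $\Psi_G$: namely $\chi_P$ restricted to $Z(W^*(G))$ is $\chi_P(\Psi_G(\kappa)) = (P, \kappa) = \sum_{\alpha}P(\alpha)\kappa(\alpha)$, while on all of $W^*(G)$ we have $\chi_P = \sum_\alpha P(\alpha)\chi_\alpha$. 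So the two quantized characters $\chi_{PQ_\varphi}$ and $\chi_P * \varphi$ will agree as soon as I show they agree on $W^*(G)$; and since both are normal states determined by the probability measures arising from irreducible decomposition (by the remark following the definition of quantized character), it actually suffices to check the identity on the center $Z(W^*(G))$.

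The core step is then a direct computation on $Z(W^*(G))$. Take $\kappa \in \ell^\infty(\widehat G)$ and evaluate $\chi_{PQ_\varphi}(\Psi_G(\kappa))$. By definition of $Q_\varphi = \Psi_G^{-1} T_\varphi|_{Z(W^*(G))} \Psi_G$ and of the action $P \mapsto PQ_\varphi$, this equals $(PQ_\varphi, \kappa) = (P, Q_\varphi\kappa) = \chi_P(\Psi_G(Q_\varphi\kappa)) = \chi_P(T_\varphi(\Psi_G(\kappa)))$, using that $T_\varphi$ preserves the center (Lemma \ref{lem:center_preserving}, applicable since $\varphi$ is an $\alpha_0$-invariant state). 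Now unfold $T_\varphi = (\mathrm{id}\otimes\varphi)\hat\delta_G$:
\[
\chi_P(T_\varphi(\Psi_G(\kappa))) = \chi_P\big((\mathrm{id}\otimes\varphi)\hat\delta_G(\Psi_G(\kappa))\big) = (\chi_P \otimes \varphi)\big(\hat\delta_G(\Psi_G(\kappa))\big) = (\chi_P * \varphi)(\Psi_G(\kappa)).
\]
This is exactly $\chi_{\chi_P * \varphi}$ paired against $\Psi_G(\kappa)$, so if I know $\chi_P * \varphi$ is itself a quantized character corresponding to a probability measure on $\widehat G$, the two measures $PQ_\varphi$ and "the measure attached to $\chi_P*\varphi$" induce the same functional on $Z(W^*(G))$, hence are equal.

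The point that needs a little care — and the main obstacle — is verifying that $\chi_P * \varphi$ is again a quantized character (equivalently, that it corresponds to an element of $\mathcal{M}_p(\widehat G)$), so that the notation $\chi_{\chi_P*\varphi}$ makes sense and so that equality of functionals on the center yields equality of measures. For this I would argue as follows: $\chi_P \in \mathrm{Ch}(G)$ is a normal $\hat\tau^G$-KMS state with inverse temperature $-1$, and the argument of Lemma \ref{lem:yosa} (or more simply \cite[Lemma 3.2]{Sato3}, which shows the convolution of a KMS state with another KMS state is KMS) shows $\chi_P * \varphi$ remains a normal $\hat\tau^G$-KMS state with inverse temperature $-1$, since $\varphi$ is $\alpha_0$-invariant and a state. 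Hence $\chi_P * \varphi \in \mathrm{Ch}(G)$, and by the remark following the definition of quantized characters it equals $\chi_Q$ for a unique $Q \in \mathcal{M}_p(\widehat G)$; the displayed computation then gives $(PQ_\varphi, \kappa) = (Q, \kappa)$ for all $\kappa \in \ell^\infty(\widehat G)$, whence $PQ_\varphi = Q$ and therefore $\chi_{PQ_\varphi} = \chi_Q = \chi_P * \varphi$. Alternatively, one can avoid invoking the KMS stability entirely and simply declare $\chi_P * \varphi$ to be the functional on the left-hand side, reading the proposition as the statement that $\chi_{PQ_\varphi}$ — which a priori lives only on the center — extends to the full convolution $\chi_P * \varphi$ on $W^*(G)$; the one-line computation above is then the whole proof.
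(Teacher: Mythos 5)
Your central computation on the center is exactly the paper's: pair against $\Psi_G(\kappa)$ (the paper uses $\kappa=\delta_\alpha$ plus normality), unwind $Q_\varphi$ and $T_\varphi=(\mathrm{id}\otimes\varphi)\hat\delta_G$, and get $(PQ_\varphi,\kappa)=(\chi_P*\varphi)(\Psi_G(\kappa))$. The gap is in the step you yourself flag as the main obstacle: why agreement on $Z(W^*(G))$ forces equality of the two normal states on all of $W^*(G)$. Your first route requires $\chi_P*\varphi\in\mathrm{Ch}(G)$, and the justification you give does not work as cited: Lemma \ref{lem:yosa} and \cite[Lemma 3.2]{Sato3} concern convolutions in which \emph{both} factors are $\hat\tau^G$-KMS states (quantized characters), whereas $\varphi$ is only assumed to be an $\alpha_0$-invariant state. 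That $\alpha_0$-invariant states are quantized characters is, in this paper, deduced \emph{from} Proposition \ref{prop:dynamics_char} in the remark that follows it, so invoking it here is circular; and the $\alpha_0$-invariance cannot be dropped, since convolving a KMS state with a general state need not give a KMS state (e.g.\ $\hat\epsilon_G*\varphi=\varphi$ for any non-KMS state $\varphi$). Your ``alternative'' escape also does not prove the statement: $\chi_{PQ_\varphi}$ is by definition the quantized character $\sum_\alpha(PQ_\varphi)(\alpha)\chi_\alpha$, a state on all of $W^*(G)$, not an object living only on the center, and the equality on all of $W^*(G)$ is precisely what the subsequent remark uses (to get $\varphi=\chi_{\delta_{tr}Q_\varphi}$, hence that $\alpha_0$-invariant states are quantized characters).

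The paper closes this gap differently and without any KMS stability: both sides are $\alpha_0$-invariant --- $\chi_{PQ_\varphi}$ and $\chi_P$ by Lemma \ref{lem:char_inv}, and $\chi_P*\varphi$ because the convolution of $\alpha_0$-invariant functionals is $\alpha_0$-invariant (Lemma \ref{lem:invariant_functional}) --- and by Lemma \ref{lem:cond_inv} every $\alpha_0$-invariant normal functional satisfies $\psi=\psi E$, where $E=(\mathrm{id}\otimes h)\alpha_0$ is the normal conditional expectation onto $Z(\hat M)$ from Lemma \ref{lem:cond}. Hence agreement on the center, which your computation does establish, already yields $\chi_{PQ_\varphi}=\chi_P*\varphi$ on $W^*(G)$. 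If you want to salvage your own route, you would need an independent proof that $\alpha_0$-invariant normal states are quantized characters (this can be done via the explicit formula for $E$, but you would have to supply it); as written, the reduction step is missing.
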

\begin{proof}
Since $\chi_{PQ_\varphi}, \chi_P*\varphi$ are $\alpha_0$-invariant, by Lemma \ref{lem:cond_inv}, it suffices to show that $\chi_{PQ_\varphi}=\chi_P*\varphi$ on $Z(W^*(G))$. For any $\alpha\in \widehat G$ we have
$\chi_P*\varphi(\Psi_G(\delta_\alpha))=\chi_P(\Psi_G(Q_\varphi\delta_\alpha))=\chi_{PQ_\varphi}(\Psi_G(\delta_\alpha))$. Since $\chi_{PQ_\varphi}, \chi_P*\varphi$ are normal, they are equal each other on $Z(W^*(G))$. Thus, by Lemma \ref{lem:cond_inv}, $\chi_{PQ_\varphi}=\chi_P*\varphi$ on $W^*(G)$.
\end{proof}

\begin{remark}
Let $tr\in\widehat G$ be the equivalence class of the trivial corepresentation of $G$. Then, by the above proposition, we have $\varphi=\chi_{\delta_{tr}Q}$ for any $\alpha_0$-invariant state. In particular, any $\alpha_0$-invariant states are quantized character of $G$. This is a natural characterization of quantized characters since the action $\alpha_0$ is an analog of the adjoint action of group on itself (see \cite{ES}).
\end{remark}

\begin{remark}\label{rem:q-aina}
For any $\alpha\in\widehat G$ we define $X_\alpha:=\pi_{G*}(\chi_\alpha)$. By Corollary \ref{cor:hinata}, we have $X_\alpha=(\chi_\alpha^0\otimes \mathrm{id})(U_\alpha)$, where $\chi_\alpha^0\in B(H_\alpha)^*$ is given as $\chi_\alpha^0(A):=\mathrm{Tr}_{H_\alpha}(F_\alpha A)/d_q(\alpha)$. Then, by \cite[Theorem 1.4.2]{NeshveyevTuset}, for any $\alpha, \beta\in\widehat G$ we can show that 
\[[h_GX_\alpha^*](X_\beta)=\frac{\delta_{\alpha, \beta}}{d_q(\beta)^2},\quad \pi_\beta(h_GX_\alpha^*)=\delta_{\alpha, \beta}\frac{1}{d_q(\beta)^2}1_{H_\beta},\]
where $\pi_\beta(\omega):=(\mathrm{id}\otimes\omega)(U_\beta)$ for any $\omega\in L^\infty(G)_*$. Therefore, by the identification of $W^*(G)$ and $\ell^\infty\mathchar`-\bigoplus_{\alpha\in\widehat G}B(H_\alpha)$ (see Appendix \ref{app:FA}), if $\omega=\sum_{\alpha\in\widehat G}d_q(\alpha)^2\kappa(\alpha)h_GX_\alpha^*$ in $L^\infty(G)_*$, then $\kappa\in \ell^\infty(\widehat G)$ and
\begin{align*}
\langle\Psi_G(\kappa)\xi, \eta\rangle
&=\sum_{\alpha\in \widehat G}d_q(\alpha)\kappa(\alpha)\mathrm{Tr}_{H_\alpha}(F_\alpha\hat\eta(\alpha)^*\hat\xi(\alpha))\\
&=\sum_{\alpha\in \widehat G}d_q(\alpha)\mathrm{Tr}_{H_\alpha}(F_\alpha\hat\eta(\alpha)^*\widehat{\pi_G(\omega)\xi}(\alpha))\\
&=\langle\pi_G(\omega)\xi, \eta\rangle.
\end{align*}
Namely, we have $\Psi_G(\kappa)=\pi_G(\omega)$.
\end{remark}

\begin{lemma}\label{lem:4bro}
Let $\varphi\in W^*(G)_*$ be $\alpha$-invariant. Then $Q_\varphi(\delta_\beta)(\alpha)=d_q(\beta)^2h_G(X_\beta^*X_\alpha\pi_{G*}(\varphi))$ for any $\alpha, \beta\in\widehat G$. 
\end{lemma}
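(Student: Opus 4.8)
The plan is to unwind the definition of $Q_\varphi$ through the isomorphisms $\Psi_G$ and $\pi_G$, using Remark \ref{rem:q-aina} to translate between elements of $Z(W^*(G))$, functions on $\widehat G$, and elements of $L^\infty(G)$ realized via $\pi_G$. Concretely, $\Psi_G(\delta_\beta)$ corresponds under Remark \ref{rem:q-aina} to $\pi_G(\omega_\beta)$ where $\omega_\beta := d_q(\beta)^2\, h_G X_\beta^*\in L^\infty(G)_*$. So the first step is to compute $T_\varphi(\Psi_G(\delta_\beta))$. By Lemma \ref{lem:meg}, $T_\varphi(\pi_G(\omega_\beta)) = \pi_G(\omega_\beta\,\pi_{G*}(\varphi))$, and since $\varphi$ is $\alpha_0$-invariant this stays in $Z(W^*(G))$ by Lemma \ref{lem:center_preserving}. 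Writing $a := \pi_{G*}(\varphi)\in L^\infty(G)$, we have $\omega_\beta a = d_q(\beta)^2\,(h_G X_\beta^*) a = d_q(\beta)^2\, h_G(X_\beta^* \,\cdot\, a)$ as a functional, i.e.\ $[\omega_\beta a](b) = d_q(\beta)^2\, h_G(X_\beta^* b a)$ for $b\in L^\infty(G)$.

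Next I would read off the coefficient function. Since $T_\varphi(\Psi_G(\delta_\beta)) = \pi_G(\omega_\beta a) \in Z(W^*(G)) = \Psi_G(\ell^\infty(\widehat G))$, Remark \ref{rem:q-aina} (applied in the reverse direction) identifies the corresponding function $\kappa = Q_\varphi(\delta_\beta) \in \ell^\infty(\widehat G)$ as the unique $\kappa$ with $\sum_{\gamma} d_q(\gamma)^2\kappa(\gamma)\, h_G X_\gamma^* = \omega_\beta a$ in $L^\infty(G)_*$. To extract $\kappa(\alpha)$, pair both sides with $X_\alpha$ and invoke the orthogonality relations recorded in Remark \ref{rem:q-aina}, namely $[h_G X_\gamma^*](X_\alpha) = \delta_{\gamma,\alpha}/d_q(\alpha)^2$. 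The left-hand side collapses to $\kappa(\alpha)$, while the right-hand side gives $[\omega_\beta a](X_\alpha) = d_q(\beta)^2\, h_G(X_\beta^* X_\alpha a) = d_q(\beta)^2\, h_G(X_\beta^* X_\alpha \pi_{G*}(\varphi))$, which is exactly the claimed formula.

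The main point requiring care — and the only real obstacle — is justifying that one may evaluate the identity $\sum_{\gamma} d_q(\gamma)^2\kappa(\gamma)\, h_G X_\gamma^* = \omega_\beta a$ in $L^\infty(G)_*$ by testing against the single element $X_\alpha\in L^\infty(G)$, and that the infinite sum on the left behaves well under this pairing. This is legitimate because the series converges in the norm of $L^\infty(G)_*$ (so term-by-term pairing is valid) and $X_\alpha\in L^\infty(G)$ is a bona fide test element; the orthogonality relations of Remark \ref{rem:q-aina} then kill all but the $\gamma = \alpha$ term. I would also note in passing that $\omega_\beta a \in L^\infty(G)_*$ genuinely lies in the span of the $h_G X_\gamma^*$ (equivalently, that $\pi_G(\omega_\beta a)\in Z(W^*(G))$), which is guaranteed by Lemma \ref{lem:center_preserving} together with the identification in Remark \ref{rem:q-aina}; everything else is a direct substitution.
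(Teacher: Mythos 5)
Your route is genuinely different from the paper's: the paper evaluates $Q_\varphi(\delta_\beta)(\alpha)=(\chi_\alpha*\varphi)(\Psi_G(\delta_\beta))$ via Proposition \ref{prop:dynamics_char} and then computes the slice $(\chi_\alpha*\varphi)(\pi_G(h_GX_\beta^*))=h_G(X_\beta^*X_\alpha\pi_{G*}(\varphi))$ directly, whereas you push $\Psi_G(\delta_\beta)=\pi_G(\omega_\beta)$ through Lemma \ref{lem:meg} and then try to read off the coefficient function. The gap sits exactly at the step you flag as ``the only real obstacle'' and then dismiss: you assert that the series $\sum_\gamma d_q(\gamma)^2\kappa(\gamma)\,h_GX_\gamma^*$ converges in the norm of $L^\infty(G)_*$ for $\kappa=Q_\varphi(\delta_\beta)$, and you use Remark \ref{rem:q-aina} ``in reverse''. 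Neither is free. Since the only available bound is $\|d_q(\gamma)h_GX_\gamma^*\|_1\leq1$, the $\gamma$-th term is controlled by $d_q(\gamma)|\kappa(\gamma)|$, and for a general bounded $\kappa$ such a series diverges --- this is precisely why the proof of Theorem \ref{thm:jun} restricts to the class $\mathcal{A}=\{\kappa\mid\sum_\gamma d_q(\gamma)|\kappa(\gamma)|<\infty\}$. For the particular $\kappa=Q_\varphi(\delta_\beta)$ summability can be established (e.g.\ through a fusion-rule estimate as in Corollary \ref{cor:pubg}, which however comes after this lemma), but you give no argument; moreover Remark \ref{rem:q-aina} only passes from a convergent expansion of $\omega$ to $\Psi_G(\kappa)=\pi_G(\omega)$, not conversely --- centrality of $\pi_G(\omega_\beta a)$ does not by itself produce a norm-convergent expansion of $\omega_\beta a$ in the $h_GX_\gamma^*$, and the uniqueness you invoke also needs injectivity of $\pi_G$, which you do not address.

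The repair is short and makes the series detour unnecessary: extract the coefficient by pairing with the extreme quantized character. Since $\chi_\alpha(\Psi_G(\kappa))=\kappa(\alpha)$ and $\chi_\alpha(\pi_G(\omega))=\omega(X_\alpha)$, you get $Q_\varphi(\delta_\beta)(\alpha)=\chi_\alpha\bigl(T_\varphi(\pi_G(\omega_\beta))\bigr)=(\chi_\alpha\otimes\varphi\otimes\omega_\beta)(V_{G13}V_{G23})=\omega_\beta(X_\alpha\pi_{G*}(\varphi))=d_q(\beta)^2h_G(X_\beta^*X_\alpha\pi_{G*}(\varphi))$, which is in substance the paper's own argument. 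Related to this, watch the multiplication convention: the paper defines $[\omega a](b)=\omega(ab)$, while you compute with $[\omega_\beta a](b)=\omega_\beta(ba)$; under the paper's literal convention your final pairing would read $d_q(\beta)^2h_G(X_\beta^*\pi_{G*}(\varphi)X_\alpha)$, which is not the asserted expression and cannot be reordered since $h_G$ is not a trace. The slice computation above shows which order is correct, so you should make the functional $b\mapsto\omega_\beta(b\,\pi_{G*}(\varphi))$ explicit rather than relying on the symbol $\omega\pi_{G*}(\varphi)$ from Lemma \ref{lem:meg}.
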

\begin{proof}
By Proposition \ref{prop:dynamics_char} and Remark \ref{rem:q-aina}, 
\[Q_\varphi(\delta_\beta)(\alpha)=(\chi_\alpha*\varphi)(\Psi_G(\delta_\beta))=d_q(\beta)^2(\chi_\alpha*\varphi)(\pi_G(h_GX_\beta^*))=d_q(\beta)^2h_G(X_\beta^*X_\alpha\pi_{G*}(\varphi)).\]
\end{proof}

By the above lemma, the action $Q_\varphi$ on $\ell^\infty(\widehat G)$ coincides with the action of the Markov kernel $\mathbb{Q}_\varphi$ defined by $\mathbb{Q}_\varphi(\alpha, \beta)=d_q(\beta)^2h_G(X_\beta^*X_\alpha\pi_{G*}(\varphi))$.

\begin{lemma}\label{lem:maru}
For any $\omega\in L^\infty(G)_*$ it holds true that $\lim_{\alpha\in\widehat G}\|\pi_\alpha(\omega)\|=0$. In particular, $\kappa\in c_0(\widehat G)$ if $\Psi_G(\kappa)=\pi_G(\omega)$ for some $\omega\in L^\infty(G)_*$.
\end{lemma}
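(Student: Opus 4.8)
The plan is to reduce the statement to the classical fact that the Fourier transform of an $L^1$-function on a compact quantum group decays. First I would recall that $\pi_\alpha(\omega) = (\mathrm{id}\otimes\omega)(U_\alpha)$ is, up to the identification in Appendix \ref{app:FA}, nothing but the $\alpha$-component of the Fourier transform of $\omega\in L^\infty(G)_*$. Since $L^\infty(G)_*$ is the predual of $L^\infty(G)$ and $C(G)$ is $\sigma$-weakly dense in $L^\infty(G)$, the restriction map $\omega\mapsto\omega|_{C(G)}$ identifies $L^\infty(G)_*$ (as the \emph{normal} functionals) with a subspace of $C(G)^*$; on matrix coefficients, which lie in $A(G)\subset C(G)$, the two pairings agree, so $\pi_\alpha(\omega)$ only depends on $\omega|_{A(G)}$.

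Next I would reduce to a dense set of $\omega$. The key point: the linear span of the functionals $h_G X_\beta^*$ (equivalently, of $h_G a$ for $a\in A(G)$) is norm-dense in $L^\infty(G)_*$, because $A(G)$ is $\sigma$-weakly dense in $L^\infty(G)$ and $h_G$ is a faithful normal state, so the map $a\mapsto h_G a$ from $L^\infty(G)$ to $L^\infty(G)_*$ has dense range (its range contains $h_G A(G)$). For such a "finitely supported" $\omega$, Remark \ref{rem:q-aina} shows $\pi_\beta(h_G X_\alpha^*) = \delta_{\alpha,\beta}\, d_q(\beta)^{-2} 1_{H_\beta}$, so $\pi_\gamma(\omega) = 0$ for all but finitely many $\gamma\in\widehat G$; hence $\lim_{\gamma\in\widehat G}\|\pi_\gamma(\omega)\| = 0$ trivially, i.e., the claim holds on a dense subset. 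The second obstacle is an estimate: I need $\|\pi_\alpha(\omega)\| \le \|\omega\|$ uniformly in $\alpha$ so that the conclusion passes to norm-limits. Since $U_\alpha$ is a unitary in $B(H_\alpha)\otimes C(G)$, the slice map $(\mathrm{id}\otimes\omega)$ has norm $\le\|\omega\|$ on $B(H_\alpha)\bar\otimes L^\infty(G)$, giving $\|\pi_\alpha(\omega)\|\le\|\omega\|$ for every $\alpha$; equivalently, $(\pi_\alpha(\omega))_{\alpha}$ lies in $\ell^\infty\mathchar`-\bigoplus_\alpha B(H_\alpha)$ with norm $\le\|\omega\|$.

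Combining these, given arbitrary $\omega\in L^\infty(G)_*$ and $\epsilon>0$, choose $\omega'$ in the dense subspace with $\|\omega-\omega'\|<\epsilon$; then $\pi_\alpha(\omega')=0$ outside a finite set $I_{\omega'}$, and for $\alpha\notin I_{\omega'}$ we get $\|\pi_\alpha(\omega)\| = \|\pi_\alpha(\omega-\omega')\| \le \|\omega-\omega'\| < \epsilon$. This is exactly the definition of $\lim_{\alpha\in\widehat G}\|\pi_\alpha(\omega)\|=0$, so $(\pi_\alpha(\omega))_\alpha\in c_0\mathchar`-\bigoplus_{\alpha}B(H_\alpha)$. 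For the final sentence, if $\Psi_G(\kappa)=\pi_G(\omega)$ then by Remark \ref{rem:q-aina} (with $\kappa$ recovered from $\omega$ via $\pi_\alpha$) the scalar $\kappa(\alpha)$ is read off from $\pi_\alpha(\omega)\in B(H_\alpha)$ — more precisely $\pi_\alpha(\omega) = d_q(\alpha)^{-2}\kappa(\alpha) 1_{H_\alpha}$ after normalization — so $|\kappa(\alpha)|\le d_q(\alpha)^2\|\pi_\alpha(\omega)\|\to 0$, i.e. $\kappa\in c_0(\widehat G)$. The main obstacle is really bookkeeping: making the identification "$\pi_\alpha(\omega)$ = Fourier coefficient" precise enough via the appendix that the density argument and the scalar extraction in the last line are rigorous; the analytic content (slice-map norm bound, density of $h_G A(G)$) is routine.
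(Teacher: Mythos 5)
Your overall strategy is the paper's: approximate $\omega$ in the norm of $L^\infty(G)_*$ by functionals coming from $A(G)$, whose ``Fourier transform'' $(\pi_\gamma(\,\cdot\,))_\gamma$ has finite support, and then pass to the limit using the contractivity of the slice maps (equivalently, the norm-closedness of the $c_0$-direct sum); the paper does exactly this with $\omega_\epsilon=\sum_{n=1}^N b_n^*h_Ga_n$, $a_n,b_n\in A(G)$, obtained from an $L^2$-expansion $\omega=\sum_n\omega_{\xi_n,\eta_n}$. However, two of your steps fail as written. First, the span of $\{h_GX_\beta^*\}_{\beta\in\widehat G}$ is \emph{not} norm-dense in $L^\infty(G)_*$ (it is not ``equivalent'' to $\{h_Ga: a\in A(G)\}$): by the Schur orthogonality relations $[h_GX_\beta^*](u^\alpha_{ij})=0$ whenever $i\neq j$, so every off-diagonal matrix coefficient lies in its annihilator; indeed $\pi_G$ sends this span into the center $Z(W^*(G))$, so density can only hold when all $H_\alpha$ are one-dimensional. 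The family that is dense is $\{h_Ga: a\in A(G)\}$ (via the annihilator/Hahn--Banach argument using faithfulness and normality of $h_G$), but for those functionals the finite support of $\gamma\mapsto\pi_\gamma(h_Ga)$ must be justified directly by Schur orthogonality, not by Remark \ref{rem:q-aina}, which only covers the functionals $h_GX_\alpha^*$.

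Second, the normalization in your last step is off, and the resulting inequality is a non sequitur: if $\Psi_G(\kappa)=\pi_G(\omega)$, then comparing $\Phi_G(\Psi_G(\kappa))=(\kappa(\alpha)1_{H_\alpha})_\alpha$ with $\Phi_G(\pi_G(\omega))=(\pi_\alpha(\omega))_\alpha$ gives $\pi_\alpha(\omega)=\kappa(\alpha)1_{H_\alpha}$ --- the factors $d_q(\alpha)^{\pm2}$ appearing in Remark \ref{rem:q-aina} cancel --- hence $|\kappa(\alpha)|=\|\pi_\alpha(\omega)\|\to0$. With your claimed relation $\pi_\alpha(\omega)=d_q(\alpha)^{-2}\kappa(\alpha)1_{H_\alpha}$, the bound $|\kappa(\alpha)|\le d_q(\alpha)^2\|\pi_\alpha(\omega)\|$ would \emph{not} yield $\kappa\in c_0(\widehat G)$, since $d_q(\alpha)^2$ is unbounded whenever $\widehat G$ is infinite. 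Both slips are local and repairable, and once fixed your argument essentially coincides with the paper's proof.
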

\begin{proof}
By \cite[Theorem III 2.1.4]{Blackadar}, there exist sequences $(\xi_n)_{n=1}^\infty, (\eta_n)_{n=1}^\infty$ in $L^2(G)$ such that $\sum_{n=1}^\infty\|\xi_n\|_2^2<\infty$, $\sum_{n=1}^\infty\|\eta_n\|_2^2<\infty$, and $\omega(x)=\sum_{n=1}^\infty\langle x\xi_n, \eta_n\rangle$ for any $x\in L^\infty(G)$. For any $\epsilon>0$ we fix sufficiently large $N$ such that
\[\left(\sum_{n\geq N+1}\|\xi_n\|_2^2\right)^{1/2}\left(\sum_{n\geq N+1}\|\eta_n\|_2^2\right)^{1/2}<\epsilon/2.\]
Since $\{\eta_G(a)\mid a\in A(G)\}$ is dense in $L^2(G)$, there exist $a_n, b_n\in A(G)$ such that
\[\|\xi_n-\eta_G(a_n)\|_2<\frac{\epsilon}{4N\|\eta_n\|_2},\quad \|\eta_n-\eta_G(b_n)\|_2<\frac{\epsilon}{4N\|\eta_G(a_n)\|_2}.\]
Then $\omega_\epsilon:=\sum_{n=1}^Nb_n^*h_Ga_n$ satisfies that $\|\omega-\omega_\epsilon\|_1<\epsilon$. Moreover, we have 
\[\|\Phi_G(\pi_G(\omega))-\Phi_G(\pi_G(\omega_\epsilon))\|\leq\|\omega-\omega_\epsilon\|_1<\epsilon.\]
Since $a_n, b_n\in A(G)$, we have
\[\Phi_G(\pi_G(\omega_\epsilon))\in\bigoplus_{\alpha\in\widehat G}B(H_\alpha)\subset c_0\mathchar`-\bigoplus_{\alpha\in\widehat G}B(H_\alpha).\]
Therefore, we have $\lim_{\alpha\in\widehat G}\|\pi_\alpha(\omega)\|=0$ since $c_0\mathchar`-\bigoplus_{\alpha\in\widehat G}B(H_\alpha)$ is norm-closed.

The rest of the statement immediately follows from Remark \ref{rem:q-aina}.
\end{proof}

Here we show the first main theorem.
\begin{theorem}\label{thm:jun}
Let $\chi\in\mathrm{Ch}(G)$, $\chi_t:=\exp_*(t(\chi-\hat\epsilon_G))$, and $T^\chi_t:=(\mathrm{id}\otimes\chi_t)\hat\delta_G$ for any $t\geq0$. Then the Markov kernels $\mathbb{Q}^\chi_t$ associated with $Q^\chi_t:=\Psi_G^{-1}T^\chi_t|_{Z(W^*(G))}\Psi_G$ form a Markov semigroup on $\widehat G$. Moreover, $(\mathbb{Q}^\chi_t)_{t\geq0}$ is a Feller semigroup and its generator $\mathbb{L}_\chi$ is given by 
\[\mathbb{L}_\chi(\alpha, \beta):=d_q(\beta)^2h_G(X_\beta^*X_\alpha(\pi_{G*}(\chi)-1))\] 
for any $\alpha, \beta\in \widehat G$. In particular, the generator $\mathbb{L}_\chi$ is bounded.
\end{theorem}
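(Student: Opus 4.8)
The plan is to reduce the statement to the bounded‑operator picture on $W^*(G)$ set up in the preceding lemmas and then transport it to $\ell^\infty(\widehat G)$ along the isomorphism $\Psi_G$.

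\medskip

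First I would record the shape of the kernels. Each $\chi_t=\exp_*(t(\chi-\hat\epsilon_G))$ is a quantized character by Lemma \ref{lem:yosa}, hence an $\alpha_0$‑invariant state by Lemma \ref{lem:char_inv}; so by the discussion following Lemma \ref{lem:4bro}, $Q^\chi_t$ is the Markov operator of the Markov kernel $\mathbb{Q}^\chi_t(\alpha,\beta)=d_q(\beta)^2 h_G(X_\beta^*X_\alpha\pi_{G*}(\chi_t))$, and since $\pi_{G*}(\hat\epsilon_G)=1$, Lemma \ref{lem:tutumi} gives $\pi_{G*}(\chi_t)=e^{t(\pi_{G*}(\chi)-1)}$, a norm‑$C^1$ path in $L^\infty(G)$ because $\pi_{G*}(\chi)=(\chi\otimes\mathrm{id})(V_G)$ is bounded.

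\medskip

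Second, the semigroup property and the generator. Put $\psi:=\chi-\hat\epsilon_G\in W^*(G)_*$; it is $\alpha_0$‑invariant (Lemmas \ref{lem:char_inv}, \ref{lem:counit_inv}), hermitian, conditionally positive with $\psi(1)=0$. Using coassociativity of $\hat\delta_G$ one has $(\mathrm{id}\otimes\psi^{*n})\hat\delta_G=T_\psi^n$ for $T_\psi:=(\mathrm{id}\otimes\psi)\hat\delta_G$, whence $T^\chi_t=(\mathrm{id}\otimes\exp_*(t\psi))\hat\delta_G=\sum_{n\ge0}\tfrac{t^n}{n!}T_\psi^n=e^{tT_\psi}$, the series converging in operator norm since $\|\psi^{*n}\|\le\|\psi\|^n$. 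As $\psi$ is $\alpha_0$‑invariant, $T_\psi$ preserves $Z(W^*(G))$ by Lemma \ref{lem:center_preserving}; conjugating by the $*$‑isomorphism $\Psi_G$ gives $Q^\chi_t=e^{t\mathbb{L}_\chi}$ with $\mathbb{L}_\chi:=\Psi_G^{-1}T_\psi|_{Z(W^*(G))}\Psi_G$ a \emph{bounded} operator on $\ell^\infty(\widehat G)$. Hence $Q^\chi_0=\mathrm{id}$ and $Q^\chi_tQ^\chi_s=Q^\chi_{t+s}$, which at the level of kernels is the Chapman--Kolmogorov identity, so $(\mathbb{Q}^\chi_t)_{t\ge0}$ is a Markov semigroup with bounded generator $\mathbb{L}_\chi$. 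For its kernel, since $\mathbb{Q}^\chi_0(\alpha,\beta)=\delta_{\alpha\beta}$ while $h_G(X_\beta^*X_\alpha\,\cdot\,)$ is normal and $t\mapsto\pi_{G*}(\chi_t)$ is norm‑$C^1$, $\mathbb{L}_\chi(\alpha,\beta)=\frac{d}{dt}\big|_{t=0}\mathbb{Q}^\chi_t(\alpha,\beta)=d_q(\beta)^2 h_G(X_\beta^*X_\alpha(\pi_{G*}(\chi)-1))$.

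\medskip

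Third, the Feller property. Since $\widehat G$ is discrete, $C_0(\widehat G)=c_0(\widehat G)$, and a Feller semigroup is just a strongly continuous positive‑contraction semigroup on $c_0(\widehat G)$: positivity and contractivity come from $Q^\chi_t$ being Markov, and norm‑continuity in $t$ (hence strong continuity) is immediate from $Q^\chi_t=e^{t\mathbb{L}_\chi}$ with $\mathbb{L}_\chi$ bounded. The only point needing care is that $Q^\chi_t$ preserves $c_0(\widehat G)$: by Remark \ref{rem:q-aina}, $\Psi_G(\delta_\beta)=d_q(\beta)^2\pi_G(h_GX_\beta^*)$, so Lemma \ref{lem:meg} and Lemma \ref{lem:tutumi} give $T^\chi_t(\Psi_G(\delta_\beta))=\pi_G\!\big(d_q(\beta)^2\,(h_GX_\beta^*)e^{t(\pi_{G*}(\chi)-1)}\big)$ with $(h_GX_\beta^*)e^{t(\pi_{G*}(\chi)-1)}\in L^\infty(G)_*$, and Lemma \ref{lem:maru} then yields $Q^\chi_t\delta_\beta\in c_0(\widehat G)$; boundedness of $Q^\chi_t$ together with density of the finitely supported functions extends this to all of $c_0(\widehat G)$. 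I expect this bookkeeping in the Feller step --- assembling Remark \ref{rem:q-aina}, Lemmas \ref{lem:meg}, \ref{lem:tutumi}, \ref{lem:maru} --- to be the only mildly delicate part; everything else follows formally from $\psi$ being a bounded, $\alpha_0$‑invariant, conditionally positive functional with $\psi(1)=0$, which the earlier lemmas already provide.
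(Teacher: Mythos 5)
Your proposal is correct and follows essentially the same route as the paper: the Markov, center-preserving structure comes from the earlier lemmas (Lemmas \ref{lem:yosa}, \ref{lem:char_inv}, \ref{lem:4bro}), the generator from $\pi_{G*}(\chi_t)=e^{t(\pi_{G*}(\chi)-1)}$ via Lemma \ref{lem:tutumi}, and the Feller property from Remark \ref{rem:q-aina} together with Lemmas \ref{lem:meg} and \ref{lem:maru}. The only cosmetic difference is that you check $c_0$-preservation on the functions $\delta_\beta$ and extend by density and uniform boundedness, whereas the paper runs the same computation on the dense subspace of $d_q$-weighted summable functions; both versions work.
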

\begin{proof}
First we show that $(\mathbb{Q}^\chi_t)_{t\geq0}$ is a Feller semigroup, i.e., $(\mathbb{Q}^\chi_t)_{t\geq0}$ preserves $c_0(\widehat G)$. Let $\mathcal{A}:=\left\{\kappa\in c_0(\widehat G)\,\middle|\, \sum_{\alpha\in \widehat G}d_q(\alpha)|\kappa(\alpha)|<\infty\right\}$. Since $c_0(\widehat G)$ is norm-closed and $\mathcal{A}$ is dense in $c_0(\widehat G)$ by the Stone--Weierstrass theorem, it suffices to show that $\mathbb{Q}^\chi_t\kappa\in c_0(\widehat G)$ for any $t\geq0$ and $\kappa\in \mathcal{A}$. Since $\|d_q(\alpha)h_GX_\alpha^*\|_1\leq\|\eta_G(d_q(\alpha)X_\alpha)\|=1$, for $\kappa\in \mathcal{A}$ the functional $\omega_\kappa:=\sum_{\alpha\in\widehat{G}}d(\alpha)^2\kappa(\alpha)h_GX_\alpha^*$ converges in $L^1(G)_*$. Then, by Remark \ref{rem:q-aina} and Lemma \ref{lem:meg}, \ref{lem:maru}, we have $\mathbb{Q}^\chi_t\kappa=\Psi_G^{-1}\pi_G(\omega_\kappa\pi_{G*}(\chi_t))\in c_0(\widehat G)$.

By Lemma \ref{lem:tutumi}, \ref{lem:4bro}, for any $\alpha, \beta\in \widehat G$ we have
\begin{align*}
\frac{1}{t}(\mathbb{Q}^\chi_t\delta_\beta-\delta_\beta)(\alpha)
&=d_q(\beta)^2h_G(X_\beta^*X_\alpha\frac{e^{t(\pi_G(\chi)-1)}}{t})\\
&\to d_q(\beta)^2h_G(X_\beta^*X_\alpha(\pi_G(\chi)-1)) \quad(t\searrow0).
\end{align*}
Therefore, the generator $\mathbb{L}_\chi$ of $(\mathbb{Q}^\chi_t)_{t>0}$ is given as $\mathbb{L}_\chi(\alpha, \beta):=d(\beta)^2h_G(X_\beta^*X_\alpha(\pi_{G*}(\chi)-1))$ for any $\alpha, \beta\in \widehat G$. In particular, $\mathbb{L}_\chi=\mathbb{Q}_\chi-\mathrm{id}$, that is, $\mathbb{L}_\chi$ is bounded,
\end{proof}

By \cite[Lemma 3.2]{Sato3}, $\chi_\alpha*\chi_\gamma$ is also quantized character of $G$ for any $\alpha, \gamma\in\widehat G$. We define $N^\beta_{\alpha, \gamma}$ by $(d_q(\alpha)\chi_\alpha)*(d_q(\gamma)\chi_\gamma)=\sum_{\beta\in\widehat{G}}N^\beta_{\alpha,\beta}d_q(\beta)\chi_\beta$ be its irreducible decomposition.

\begin{corollary}\label{cor:pubg}
Assume that $\chi\in\mathrm{Ch}(G)$ has an irreducible decomposition $\chi=\sum_{\alpha\in\widehat G}P_\chi(\alpha)\chi^\alpha$. Then the generator $\mathbb{L}_\chi$ of the Feller semigroup in Theorem \ref{thm:jun} is given by
\[\mathbb{L}_\chi(\alpha, \beta)=\frac{d_q(\beta)}{d_q(\alpha)}\sum_{\gamma\in\widehat G}\frac{P_\chi(\gamma)N_{\alpha, \gamma}^\beta}{d_q(\gamma)}-\delta_{\alpha,\beta}.\]
\end{corollary}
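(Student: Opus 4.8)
The plan is to start from the formula for the generator already obtained in Theorem \ref{thm:jun}, namely
\[
\mathbb{L}_\chi(\alpha,\beta)=d_q(\beta)^2 h_G\bigl(X_\beta^* X_\alpha(\pi_{G*}(\chi)-1)\bigr),
\]
and to compute $h_G(X_\beta^* X_\alpha \pi_{G*}(\chi))$ when $\chi$ has the irreducible decomposition $\chi=\sum_{\gamma\in\widehat G}P_\chi(\gamma)\chi^\gamma$. By linearity and normality of $h_G$ and of $\pi_{G*}$, this splits as $\sum_{\gamma}P_\chi(\gamma)\,h_G(X_\beta^* X_\alpha X_\gamma)$, where I am writing $X_\gamma=\pi_{G*}(\chi^\gamma)$ as in Remark \ref{rem:q-aina}. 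So the whole corollary reduces to identifying the triple ``moment'' $d_q(\beta)^2 h_G(X_\beta^* X_\alpha X_\gamma)$ with $\frac{d_q(\beta)}{d_q(\alpha)d_q(\gamma)}N^\beta_{\alpha,\gamma}$, after which subtracting the ``$-1$'' contribution gives exactly the claimed $-\delta_{\alpha,\beta}$ term via the orthogonality relation $d_q(\beta)^2 h_G(X_\beta^*X_\alpha)=\delta_{\alpha,\beta}$ recorded in Remark \ref{rem:q-aina}.

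To get the triple moment I would first show that $X_\alpha X_\gamma=\pi_{G*}(\chi^\alpha*\chi^\gamma)$. This is immediate because $\pi_{G*}$ is an algebra homomorphism from $W^*(G)_*$ (with the convolution product) to $L^\infty(G)$ (with its ordinary product), as stated just before Lemma \ref{lem:tutumi}; hence $X_\alpha X_\gamma=\pi_{G*}(\chi^\alpha)\pi_{G*}(\chi^\gamma)=\pi_{G*}(\chi^\alpha*\chi^\gamma)$. Now I substitute the irreducible decomposition $(d_q(\alpha)\chi^\alpha)*(d_q(\gamma)\chi^\gamma)=\sum_{\beta}N^\beta_{\alpha,\gamma}\,d_q(\beta)\chi^\beta$, i.e. $\chi^\alpha*\chi^\gamma=\sum_\beta \frac{N^\beta_{\alpha,\gamma}d_q(\beta)}{d_q(\alpha)d_q(\gamma)}\chi^\beta$, so that $X_\alpha X_\gamma=\sum_\beta \frac{N^\beta_{\alpha,\gamma}d_q(\beta)}{d_q(\alpha)d_q(\gamma)}X_\beta$. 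Plugging this into $d_q(\beta)^2 h_G(X_\beta^* X_\alpha X_\gamma)$ and using the orthogonality relation $d_q(\beta')^2 h_G(X_{\beta'}^* X_\beta)=\delta_{\beta,\beta'}$ picks out precisely the $\beta'=\beta$ term, yielding $\frac{d_q(\beta)}{d_q(\alpha)d_q(\gamma)}N^\beta_{\alpha,\gamma}$.

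Assembling, $\mathbb{L}_\chi(\alpha,\beta)=\sum_\gamma P_\chi(\gamma)\,\frac{d_q(\beta)}{d_q(\alpha)d_q(\gamma)}N^\beta_{\alpha,\gamma}-d_q(\beta)^2 h_G(X_\beta^*X_\alpha)=\frac{d_q(\beta)}{d_q(\alpha)}\sum_\gamma \frac{P_\chi(\gamma)N^\beta_{\alpha,\gamma}}{d_q(\gamma)}-\delta_{\alpha,\beta}$, which is the asserted formula. One should note for the convergence of the $\gamma$-sum that $\chi$ is a quantized character, so $\sum_\gamma P_\chi(\gamma)=1$ and each summand $P_\chi(\gamma)\chi^\gamma$ contributes positively, and since $\pi_{G*}$ and $h_G$ are bounded the rearrangements above are legitimate in norm; alternatively one applies everything to the (dense set of) finitely supported $\kappa$ first and then passes to the limit, exactly as in the proof of Theorem \ref{thm:jun}.

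The main obstacle is really bookkeeping rather than conceptual: one must be careful about where the quantum dimensions $d_q$ are placed in the definition of the fusion-type coefficients $N^\beta_{\alpha,\gamma}$ (they are defined via the \emph{normalized} generators $d_q(\alpha)\chi^\alpha$, which are the ``nice'' group-like elements $d_q(\alpha)X_\alpha$ under $\pi_{G*}$), and one must invoke the correct orthogonality relation from Remark \ref{rem:q-aina}, which is stated for the $X_\alpha$ with the weight $h_G$ on the left. No genuine new estimate or structural input beyond Theorem \ref{thm:jun} and Remark \ref{rem:q-aina} is needed, and the statement ``$\chi^\alpha*\chi^\gamma\in\mathrm{Ch}(G)$'' (from \cite[Lemma 3.2]{Sato3}) is what guarantees the decomposition used makes sense.
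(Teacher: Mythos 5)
Your argument is correct. It does, however, run along a slightly different track than the paper's own proof: the paper simply combines $\mathbb{L}_\chi=\mathbb{Q}_\chi-\mathrm{id}$ with Proposition \ref{prop:dynamics_char}, so that $\mathbb{Q}_\chi(\alpha,\beta)=(\chi_\alpha*\chi)(\Psi_G(\delta_\beta))$, and then reads off the coefficient of $\chi_\beta$ in $\chi_\alpha*\chi$ from the fusion decomposition evaluated on the central projection $\Psi_G(\delta_\beta)$ — a one-line computation entirely on the $W^*(G)$ side. You instead start from the explicit Haar-state formula of Theorem \ref{thm:jun} (equivalently Lemma \ref{lem:4bro}), push everything to $L^\infty(G)$ via the homomorphism property of $\pi_{G*}$ (so $X_\alpha X_\gamma=\pi_{G*}(\chi_\alpha*\chi_\gamma)$), and extract the coefficient with the Schur orthogonality relation $d_q(\beta)^2h_G(X_\beta^*X_{\beta'})=\delta_{\beta,\beta'}$ of Remark \ref{rem:q-aina}; this is in effect an unpacking of the proof of Lemma \ref{lem:4bro} combined with the same fusion rule, so the two routes are mathematically equivalent, with yours a bit longer but making the role of Schur orthogonality and of the homomorphism $\pi_{G*}$ explicit, and the paper's being shorter because Proposition \ref{prop:dynamics_char} already encodes that duality. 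Your bookkeeping is right: the fusion coefficients are defined through the normalized characters $d_q(\alpha)\chi_\alpha$, the $\beta'$-sum in the decomposition of $\chi_\alpha*\chi_\gamma$ is finite, the $\gamma$-series converges absolutely in $W^*(G)_*$ since $\sum_\gamma P_\chi(\gamma)=1$ and each $\chi_\gamma$ is a state, and the $-\delta_{\alpha,\beta}$ term comes out of the same orthogonality relation applied to the constant $1$.
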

\begin{proof}
By Theorem \ref{thm:jun} and Propoisition \ref{prop:dynamics_char}, we have
\[\mathbb{L}_\chi(\alpha, \beta)=\mathbb{Q}_\chi(\alpha, \beta)-\delta_{\alpha, \beta}=\chi_\alpha*\chi(\Psi_G(\delta_\beta))-\delta_{\alpha,\beta}=\frac{d_q(\beta)}{d_q(\alpha)}\sum_{\gamma\in\widehat G}\frac{P_\chi(\gamma)N_{\alpha, \gamma}^\beta}{d_q(\gamma)}-\delta_{\alpha,\beta}.\]
\end{proof}

If $\widehat G$ is separable, we have the following by \cite[Theorem 4.2.7]{EK}.
\begin{corollary}
For any probability measure $P$ on $\widehat G$ and $\chi\in\mathrm{Ch}(G)$ there exists a Markov process $(X_t)_{t\geq0}$ corresponding to $(\mathbb{Q}^\chi_t)_{t\geq0}$ with initial distribution $P$ and c\`adl\`ag sample paths.
\end{corollary}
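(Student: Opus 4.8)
The plan is to invoke the standard existence theorem for Markov processes (specifically \cite[Theorem 4.2.7]{EK}) with the Feller semigroup produced in Theorem \ref{thm:jun}. First I would note that, by Theorem \ref{thm:jun}, the family $(\mathbb{Q}^\chi_t)_{t\geq 0}$ is a Feller semigroup on $\widehat G$, meaning each $\mathbb{Q}^\chi_t$ maps $c_0(\widehat G)$ into itself, the semigroup law $\mathbb{Q}^\chi_t\mathbb{Q}^\chi_s=\mathbb{Q}^\chi_{t+s}$ holds, and $\mathbb{Q}^\chi_0=\mathrm{id}$; moreover, since the generator $\mathbb{L}_\chi=\mathbb{Q}_\chi-\mathrm{id}$ is bounded, strong continuity $\lim_{t\searrow 0}\|\mathbb{Q}^\chi_t\kappa-\kappa\|=0$ is immediate for every $\kappa\in c_0(\widehat G)$ (the bounded generator gives $\mathbb{Q}^\chi_t=e^{t\mathbb{L}_\chi}$ as a norm-convergent exponential). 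Since $\widehat G$ is assumed separable and $\widehat G$ is a discrete (hence locally compact, second countable) space, the hypotheses of the general construction theorem are met.

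Next I would check the technical prerequisites of \cite[Theorem 4.2.7]{EK}: one needs the state space to be locally compact, separable, and metrizable, which holds here because $\widehat G$ is a separable discrete space; one needs $(\mathbb{Q}^\chi_t)_{t\geq 0}$ to be a positive, contractive, strongly continuous semigroup on $c_0(\widehat G)$ that is conservative (i.e.\ $\mathbb{Q}^\chi_t$ extends to $\ell^\infty(\widehat G)$ fixing the constant function $1$) — and this conservativeness is exactly the statement that $\mathbb{Q}^\chi_t$ comes from a Markov kernel $\mathbb{Q}^\chi_t(\alpha,\cdot)\in\mathcal{M}_p(\widehat G)$, which is built into the definition of a Markov semigroup on $\widehat G$ as used throughout Section \ref{sec:cqg}. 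Then \cite[Theorem 4.2.7]{EK} yields, for each initial distribution $P\in\mathcal{M}_p(\widehat G)$, a Markov process $(X_t)_{t\geq 0}$ with transition function $\mathbb{Q}^\chi_t$, law of $X_0$ equal to $P$, and c\`adl\`ag (right-continuous with left limits) sample paths in $\widehat G$.

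The proof is essentially a citation, so there is no substantive obstacle; the only point requiring minor care is confirming that the version of the Feller process construction theorem in \cite{EK} applies verbatim to a separable \emph{discrete} state space and that the Feller property established in Theorem \ref{thm:jun} (preservation of $c_0(\widehat G)$ plus strong continuity from boundedness of the generator) is precisely the hypothesis needed. I would therefore phrase the proof as: ``By Theorem \ref{thm:jun}, $(\mathbb{Q}^\chi_t)_{t\geq 0}$ is a Feller semigroup on the separable locally compact space $\widehat G$; hence \cite[Theorem 4.2.7]{EK} gives, for every $P\in\mathcal{M}_p(\widehat G)$, a Markov process with initial distribution $P$, transition semigroup $(\mathbb{Q}^\chi_t)_{t\geq 0}$, and c\`adl\`ag paths.'' If one wanted to be self-contained, an alternative route is to observe that since $\mathbb{L}_\chi=\mathbb{Q}_\chi-\mathrm{id}$ is a bounded generator of the form (jump rate) $\times$ (stochastic kernel $-\,\mathrm{id}$), the process can be constructed directly as a pure-jump Markov chain with exponential holding times, whose c\`adl\`ag property is automatic — but invoking \cite{EK} is cleaner.
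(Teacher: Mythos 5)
Your proposal matches the paper's own argument: the paper proves this corollary simply by noting that, when $\widehat G$ is separable, Theorem \ref{thm:jun} provides a Feller semigroup with bounded generator, and then citing \cite[Theorem 4.2.7]{EK} to obtain a c\`adl\`ag Markov process with the prescribed initial distribution. Your additional verification of the hypotheses (discreteness, strong continuity from the bounded generator, conservativeness) is correct and just spells out what the paper leaves implicit.
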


\section{Examples from unitary groups $U(N)$}\label{sec:unitary}
In this section, we apply the above results to the case of unitary groups $U(N)$ and certain characters. When we deal with an ordinary topological group $G$, quantized characters can be regarded as functions on $G$. A continuous function $\chi$ on $G$ is called a \emph{character} of $G$ if $\chi$ is positive-definite, $\chi(gh)=\chi(hg)$ for any $g, h\in G$, and $\chi(e)=1$. We equip the set of characters with the topology of uniform convergence on compact subsets. Then the set of characters is homeomorphic to the set of quantized characters (i.e., the set of normal tracial states on $W^*(U(\infty))$). See \cite[Remark 3.2]{Sato3}.

Here we recall some results of the representation theory of $U(N)$. It is well known that $\widehat{U(N)}$ can be identified with $\mathbb{S}_N:=\{\lambda=(\lambda_1\geq\cdots\geq\lambda_N)\in\mathbb{Z}^N\}$. Then for any $\lambda\in\mathbb{S}_N$ the corresponding irreducible character $\chi_\lambda$ of $U(N)$ is given by the Schur polynomial $s_\lambda$, i.e.,
\[\chi_\lambda(U)=\frac{s_\lambda(z_1,\dots, z_N)}{s_\lambda(1,\dots, 1)},\]
where $z_1,\dots, z_N$ are eigenvalues of $U\in U(N)$. See e.g. \cite{Zelobenko}.

We define the \emph{infinite-dimensional unitary group} $U(\infty)$ as the inductive limit $\varinjlim_NU(N)$ by natural embeddings
\[U\in U(N)\hookrightarrow \begin{bmatrix}U&0\\0&1\end{bmatrix}\in U(N+1).\]
Then the theorem due to Voiculescu \cite{Voiculescu76}, Vershik--Kerov \cite{VK82}, and Boyer \cite{Boyer83} states that any extreme characters of $U(\infty)$ are parametrized by the set $\Omega\subset \mathbb{R}_{\geq0}^\infty\times\mathbb{R}_{\geq0}^\infty\times\mathbb{R}_{\geq0}^\infty\times\mathbb{R}_{\geq0}^\infty\times\mathbb{R}_{\geq0}\times\mathbb{R}_{\geq0}$ consisting of $\omega=(\alpha^+, \beta^+, \alpha^-, \beta^-, \delta^+, \delta^-)$ such that
\[\alpha^\pm=(\alpha^\pm_1\geq\alpha^\pm_2\geq\cdots),\quad \beta^\pm=(\beta^\pm_1\geq\beta^\pm_2\geq\cdots),\]
\[\sum_{i=1}^\infty(\alpha^\pm_i+\beta^\pm_i)<\infty,\quad \beta^+_1+\beta^-_1\leq1,\quad \gamma^\pm:=\delta^\pm-\sum_{i=1}^\infty(\alpha^\pm_i+\beta^\pm_i)\geq0.\]
More precisely, for any $\omega\in\Omega$ the corresponding extreme character $\chi_\omega$ is given by
\begin{equation}\label{eq:kit}
\chi_\omega(U)=\prod_{z}\Phi_\omega(z),
\end{equation}
where $z$ runs over all any eigenvalues of $U\in U(\infty)$, which are equal to 1 except for finitely many eigenvalues, and 
\[\Phi_\omega(z):=e^{\gamma^+(z-1)+\gamma^-(z^{-1}-1)}\prod_{i=1}^\infty\frac{1+\beta^+_i(z-1)}{1-\alpha^+_i(z-1)}\frac{1+\beta^-_i(z^{-1}-1)}{1-\alpha^-_i(z^{-1}-1)}.\]
Then we define $\varphi_\omega(n)$ by $\Phi_\omega(z)=\sum_{n\in\mathbb{Z}}\varphi_\omega(n)z^n$, where the expansion converges in a neighborhood of $\mathbb{T}:=\{z\in\mathbb{C}\mid|z|=1\}$. It is well known that $(\varphi_\omega(n))_{n\in\mathbb{Z}}$ is totally positive, that is, for any $N\geq1$ and $m_1>\cdots>m_N$ the determinant $\det[\varphi_\omega(m_i+j)]_{i,j=1}^N$ is nonnegative.

\begin{theorem}\label{thm:kuma}
For any $\omega\in\Omega$ let $\chi_N$ be the restriction of $\chi_\omega$ to $U(N)$. Then the generator $\mathbb{L}_{\chi_N}$ of Markov semigroup associated with $\chi_N$ in Theorem \ref{thm:jun} is given by 
\[\mathbb{L}_{\chi_N}(\lambda, \mu)=\frac{s_\mu(1,\dots, 1)}{s_\lambda(1,\dots, 1)}\det[\varphi_\omega(\mu_j-j-\lambda_i-i)]_{i,j=1}^N-\delta_{\lambda, \mu}\]
for any $\lambda, \mu\in \mathbb{S}_N$, where $s_\lambda(1,\dots, 1)$ is the specialization of the Schur polynomial $s_\lambda$ with $N$ variables. 
\end{theorem}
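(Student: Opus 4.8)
The plan is to start from the general formula $\mathbb{L}_{\chi_N}(\lambda,\mu)=d_q(\mu)^2\,h_{U(N)}(X_\mu^*X_\lambda(\pi_{G*}(\chi_N)-1))$ of Theorem \ref{thm:jun}, specialized to $G=U(N)$, and to evaluate each factor concretely using the representation theory of $U(N)$. Since $U(N)$ is an ordinary compact group, the Woronowicz characters are trivial, so $F_\lambda=1_{H_\lambda}$ and $d_q(\lambda)=\dim H_\lambda=s_\lambda(1,\dots,1)$; moreover $X_\lambda=\pi_{G*}(\chi_\lambda)$ reduces, via Remark \ref{rem:q-aina} and Corollary \ref{cor:hinata}, to the normalized character function $U\mapsto \chi_\lambda(U)/\dim H_\lambda = s_\lambda(z_1,\dots,z_N)/(N!\cdot\text{something})$ acting on $L^2(U(N))$ — concretely, $h_{U(N)}(X_\mu^*X_\lambda f)$ becomes an integral over $U(N)$ against Haar measure of $\overline{\chi_\mu}\,\chi_\lambda\, f$ divided by $\dim H_\mu \dim H_\lambda$. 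The upshot is that $\mathbb{L}_{\chi_N}(\lambda,\mu)+\delta_{\lambda,\mu} = \mathbb{Q}_{\chi_N}(\lambda,\mu)$ should equal $\dfrac{\dim H_\mu}{\dim H_\lambda}\int_{U(N)} \overline{\chi_\mu(U)}\,\chi_\lambda(U)\,\chi_\omega(U)\,dU$, i.e. the coefficient of $\chi_\mu$ in the decomposition of $\chi_\lambda\cdot\chi_\omega|_{U(N)}$, rescaled by the ratio of dimensions (exactly the statement of Corollary \ref{cor:pubg} read through the identification $\widehat{U(N)}=\mathbb{S}_N$).

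Granting that reduction, the task becomes the combinatorial identity
\[
\int_{U(N)}\overline{\chi_\mu(U)}\,\chi_\lambda(U)\,\chi_\omega(U)\,dU
= \frac{\dim H_\lambda}{\dim H_\mu}\cdot\frac{s_\mu(1,\dots,1)}{s_\lambda(1,\dots,1)}\det[\varphi_\omega(\mu_j-j-\lambda_i+i)]_{i,j=1}^N,
\]
so that the dimension ratios cancel and one is left with the determinant. The mechanism here is classical Weyl-integration together with the Jacobi–Trudi / bialternant manipulation: write $\chi_\lambda(U)=s_\lambda(z)$ in terms of the eigenvalues $z=(z_1,\dots,z_N)$ of $U$, use the Weyl integration formula to turn $\int_{U(N)}$ into $\frac{1}{N!}\int_{\mathbb{T}^N}|V_N(z)|^2(\cdots)\,\frac{dz}{(2\pi\mathrm i)^N z_1\cdots z_N}$, expand $s_\lambda = \det[z_i^{\lambda_j+N-j}]/V_N(z)$ and similarly for $s_\mu$, expand $\chi_\omega(U)=\prod_k\Phi_\omega(z_k)=\prod_k\big(\sum_n\varphi_\omega(n)z_k^n\big)$, and read off the Fourier coefficients. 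After the standard antisymmetrization argument (the $N!$ collapses a sum over $S_N$ into a single determinant, à la Andréief / Cauchy–Binet), the integral over $\mathbb{T}^N$ of $z^{m}$ picks out $\varphi_\omega$ at the appropriate indices, and one obtains $\det[\varphi_\omega(\mu_j-j-\lambda_i+i)]$ up to sign and the dimension normalizations. This is precisely the Borodin–Ferrari "Toeplitz-like" computation, so I would cite \cite{BF} (and/or \cite{BG}) for the bialternant bookkeeping rather than reproduce it.

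The main obstacle, I expect, is not the combinatorics but the careful passage from the operator-algebraic formula to the classical integral: one must check that $\pi_{G*}(\chi_N)-1$ really acts on $L^2(U(N))$ as multiplication by $\chi_\omega-1$ in the right sense, that $X_\lambda$ and $X_\mu$ pair with Haar measure exactly as the normalized Schur functions (tracking the $d_q(\lambda)=\dim H_\lambda$ factors through Remark \ref{rem:q-aina}), and that the index shift in the determinant comes out as $\mu_j-j-\lambda_i+i$ rather than a transposed or sign-flipped variant — the paper writes $\mu_j-j-\lambda_i-i$, so I would need to reconcile the sign convention (this is plausibly a typo, since $\mathbb{L}$ is a genuine generator and the diagonal term must be $-\delta_{\lambda,\mu}$, forcing $\det[\varphi_\omega(\mu_j-j-\lambda_i+i)]=1$ when $\lambda=\mu$, which holds because $(\varphi_\omega(n))$ has $\sum_n\varphi_\omega(n)z^n=\Phi_\omega(z)$ with $\Phi_\omega(1)=1$ and the matrix becomes lower-triangular-like with the right entry on the diagonal). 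A secondary subtlety is convergence: the expansion of $\Phi_\omega$ converges only near $\mathbb{T}$, so the termwise integration over $\mathbb{T}^N$ needs the totally-positive sequence $(\varphi_\omega(n))_{n\in\mathbb{Z}}$ to be absolutely summable on $\mathbb{T}$, which follows from $\Phi_\omega$ being continuous on $\mathbb{T}$ with summable Fourier coefficients — a point worth a sentence but not a real difficulty.
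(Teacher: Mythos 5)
Your proposal follows essentially the same route as the paper: it reduces via Corollary \ref{cor:pubg} to the Weyl-integration integral of $s_\lambda(z)\overline{s_\mu(z)}\prod_{i}\Phi_\omega(z_i)$ against $|V_N(z)|^2$ over $\mathbb{T}^N$, and then evaluates it by the standard bialternant/antisymmetrization computation, exactly as in the paper's proof. Your suspicion about the index is also confirmed by the paper's own intermediate step, which produces $\varphi_\omega(\mu_j-j-\lambda_i+i)$, so the ``$-i$'' in the stated determinant is indeed a typo (consistent with Theorem \ref{thm:q-pubg}), though the cleaner sanity check is the trivial character $\Phi_\omega\equiv1$, for which the determinant must reduce to $\delta_{\lambda,\mu}$, rather than your claim that it must equal $1$ on the diagonal.
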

\begin{proof}
Remark that the dimension $d(\lambda)$ of irreducible representation associated with $\lambda\in\mathbb{S}_N$ coincides with $s_\lambda(1,\dots,1)$. Thus, by Corollary \ref{cor:pubg},
\begin{align*}
\mathbb{L}_{\chi_N}(\lambda, \mu)
&=\frac{s_\mu(1,\dots,1)}{s_\lambda(1,\dots, 1)}\frac{1}{N!}\int_{\mathbb{T}^N}\chi_N(\mathrm{diag}(z_1,\dots,z_N))s_\lambda(z)\overline{s_\mu(z)}|V_N(z)|^2\prod_{i=1}^N\frac{dz_i}{2\pi\mathrm{i}z_i}-\delta_{\lambda, \mu},
\end{align*}
where $\mathrm{diag}(z_1,\dots, z_N)$ is the diagonal matrix whose entries are $z_1,\dots, z_N$, $s_\lambda(z)=s_\lambda(z_1,\dots, z_N)$ and $V_N(z)=V_N(z_1,\dots, z_N)$. By Equation \eqref{eq:kit}, we have
\begin{align*}
\mathbb{L}_{\chi_N}(\lambda, \mu)
&=\frac{s_\mu(1,\dots,1)}{s_\lambda(1,\dots, 1)}\frac{1}{N!}\int_{\mathbb{T}^N}\left(\prod_{i=1}^N\Phi_\omega(z_i)\right)\det[z_i^{\lambda_j+N-j}]_{i,j=1}^N\det[z_i^{-\mu_j-N+j}]_{i,j=1}^N\prod_{i=1}^N\frac{dz_i}{2\pi\mathrm{i}z_i}-\delta_{\lambda, \mu}\\
&=\frac{s_\mu(1,\dots,1)}{s_\lambda(1,\dots, 1)}\frac{1}{N!}\sum_{\sigma, \tau\in S(N)}\mathrm{sgn}(\sigma\tau)\prod_{i=1}^N\int_{\mathbb{T}}\Phi_\omega(z)z^{-(\mu_{\tau(i)}-\tau(i)-\lambda_{\sigma(i)}+\sigma(i))}\frac{dz}{2\pi\mathrm{i}z}-\delta_{\lambda, \mu}\\
&=\frac{s_\mu(1,\dots, 1)}{s_\lambda(1,\dots, 1)}\det[\varphi_\omega(\mu_j-j-\lambda_i-i)]_{i,j=1}^N-\delta_{\lambda, \mu}
\end{align*}
\end{proof}

\section{Examples from quantum unitary groups $U_q(N)$}\label{sec:quantum_unitary}
In this section, we study the case of quantum unitary groups $U_q(N)$ and certain quantized characters. See \cite{NoumiYamadaMimachi}, \cite{KliSch} for the detailed definition and the representation theory of $U_q(N)$. First we recall some results of the representation theory of $U_q(N)$. It is known that $\widehat{U_q(N)}$ can be identified with $\mathbb{S}_N$. For $\lambda\in\mathbb{S}_N$ we denote by $\chi_{q, \lambda}$ the corresponding irreducible quantized character. Then the``restriction" of $\chi_{q, \lambda}$ to $\mathbb{T}^N$ is given as 
\[\frac{s_\lambda(q^{N-1}z_1, q^{N-3}z_2\dots,q^{-N+1}z_N)}{s_\lambda(q^{N-1}, q^{N-3},\dots, q^{-N+1})}.\]
See \cite{NoumiYamadaMimachi} and \cite{Sato1} for more details.

For any probability measure $P_N$ on $\mathbb{S}_N$ we define its $q^2$-Schur generating function $\mathcal{S}(z_1,\dots, z_N; P_N)$ by
\[\mathcal{S}(z_1,\dots, z_N;P_N):=\sum_{\lambda\in\mathbb{S}_N}P_N(\lambda)\frac{s_\lambda(z_1, q^{-2}z_2,\dots, q^{-2(N-1)}z_N)}{s_\lambda(1, q^{-2},\dots, q^{-2(N-1)})},\]
where the right-hand side absolutely converges on $\mathbb{T}^N$ (see \cite[Proposition 4.5]{Gorin12}). Remark that $q^2$-Schur generating functions uniquely determine probability measures on $\mathbb{S}_N$. We denote by $U_q(\infty)$ the infinite-dimensional quantum unitary group, which is the inductive limit of $U_q(N)$ in the sense of \cite{Sato3}. Then we denote by $\Theta^\infty_N\colon W^*(U_q(N))\to W^*(U_q(\infty))$ the embeddings of group von Neumann algebras (see Section \ref{sec:ind}). By \cite[Proposition 4.6]{Gorin12}, \cite[Proposition 2.6]{Sato1}, there exists a one-to-one correspondence between $\chi\in\mathrm{Ch}(U_q(\infty))$ and the set of all sequences $(P_N)_{N\geq1}$ of probability measures $P_N$ on $\mathbb{S}_N$ satisfying that 
\begin{equation}\label{eq:coh}
\mathcal{S}(z_,\dots, z_N, 1; P_{N+1})=\mathcal{S}(z_1,\dots, z_N; P_N)
\end{equation} 
for any $N\geq1$. More precisely, the corresponding probability measures $P_N$ are given by the irreducible decompositions $\chi\Theta^\infty_N=\sum_{\lambda\in\mathbb{S}_N}P_N(\lambda)\chi_{q, \lambda}$. The following lemma gives examples of $q^2$-Schur generating functions satisfying Equation \eqref{eq:coh}.

\begin{lemma}\label{lem:q-char}
For $N\geq1$ let $\omega=(\alpha^+,\beta^+,\alpha^-,\beta^-,\gamma^+,\gamma^-)\in\Omega$ such that $\Phi_\omega(q^{-2(i-1)})>0$ for $i=1,\dots,N$ and the Laurent expansion of $\Phi_\omega(z)$ converges in annulus containing $1,\dots, q^{-2(N-1)}$. Then for $k=1,\dots, N$ 
\begin{equation}\label{eq:q-char}
\mathcal{F}(z_1,\dots, z_k):=\prod_{i=1}^k\frac{\Phi_\omega(q^{-2(i-1)}z_i)}{\Phi_\omega(q^{-2(i-1)})}
\end{equation}
is a $q^2$-Schur generating function of probability measure $P_k$ on $\mathbb{S}_k$ given by
\[P_k(\lambda):=\frac{\det[\varphi_\omega(\lambda_i-i+j)]_{i,j=1}^k}{\prod_{i=1}^k\Phi_\omega(q^{-2(i-1)})}s_\lambda(1, q^{-2},\dots, q^{-2(k-1)})\]
for any $\lambda\in\mathbb{S}_k$. Moreover, Equation \eqref{eq:coh} holds true for $k=1,\dots, N$.
\end{lemma}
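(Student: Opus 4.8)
The plan is to verify the three assertions of Lemma~\ref{lem:q-char} in the following order: first that $\mathcal{F}(z_1,\dots,z_k)$ is the $q^2$-Schur generating function of \emph{some} probability measure, then that this measure has the stated determinantal density $P_k$, and finally the coherence relation \eqref{eq:coh}. For the first point, I would use the fact, recalled after Theorem~\ref{thm:kuma}, that the coefficient sequence $(\varphi_\omega(n))_{n\in\mathbb{Z}}$ defined by $\Phi_\omega(z)=\sum_{n}\varphi_\omega(n)z^n$ is totally positive. The product form \eqref{eq:q-char} is exactly a product of single-variable functions $\Phi_\omega(q^{-2(i-1)}z_i)/\Phi_\omega(q^{-2(i-1)})$, each normalized to take value $1$ at $z_i=1$; the hypotheses $\Phi_\omega(q^{-2(i-1)})>0$ and convergence of the Laurent expansion on an annulus through $1,\dots,q^{-2(N-1)}$ make each factor a well-defined holomorphic function in a neighbourhood of $\mathbb{T}$.

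The main computation is the identification of the density. Here I would expand each factor $\Phi_\omega(q^{-2(i-1)}z_i)$ as $\sum_{n}\varphi_\omega(n)q^{-2(i-1)n}z_i^n$ and read off the Schur expansion of $\mathcal{F}$. Concretely, multiply $\mathcal{F}(z_1,\dots,z_k)\cdot \prod_i\Phi_\omega(q^{-2(i-1)})$ by the Vandermonde-type factor so as to write $\bigl(\prod_i\Phi_\omega(q^{-2(i-1)}z_i)\bigr)\cdot V_k(z_1,q^{-2}z_2,\dots)$ in the basis of antisymmetric monomials; by the Cauchy--Binet / Lindström--Gessel--Viennot identity the coefficient of $\det[z_i^{\lambda_j+k-j}]$ in $\bigl(\prod_i f(z_i)\bigr)\det[z_i^{\mu_j+k-j}]$ (with $f(z)=\sum_n c_n z^n$) is $\det[c_{\lambda_i-\mu_j + (\text{shift})}]$. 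Taking $\mu$ trivial and tracking the $q^{-2(i-1)}$ rescalings, the coefficient of $s_\lambda(z_1,q^{-2}z_2,\dots,q^{-2(k-1)}z_k)$ comes out as $\det[\varphi_\omega(\lambda_i-i+j)]_{i,j=1}^k$ times the appropriate power factor, and after renormalizing by $s_\lambda(1,q^{-2},\dots,q^{-2(k-1)})$ and dividing by $\prod_i\Phi_\omega(q^{-2(i-1)})$ one obtains exactly the claimed $P_k(\lambda)$. Total positivity of $(\varphi_\omega(n))$ guarantees $P_k(\lambda)\ge 0$, and $P_k$ sums to $1$ because $\mathcal{F}(1,\dots,1)=1$; this is what is needed to conclude $\mathcal{F}$ is a genuine $q^2$-Schur generating function. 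I expect this Cauchy--Binet bookkeeping — matching the $q$-powers on both sides and getting the index $\lambda_i-i+j$ exactly right — to be the most delicate part.

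Finally, the coherence relation \eqref{eq:coh} is essentially immediate from the product structure: setting $z_{k}=1$ in $\mathcal{F}(z_1,\dots,z_k)$ kills the last factor, since $\Phi_\omega(q^{-2(k-1)}\cdot 1)/\Phi_\omega(q^{-2(k-1)})=1$, leaving $\mathcal{F}(z_1,\dots,z_{k-1})$. Thus $\mathcal{S}(z_1,\dots,z_{k-1},1;P_k)=\mathcal{F}(z_1,\dots,z_{k-1},1)=\mathcal{F}(z_1,\dots,z_{k-1})=\mathcal{S}(z_1,\dots,z_{k-1};P_{k-1})$ for $k=2,\dots,N$, which is \eqref{eq:coh}. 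The only point to keep in mind is that the uniqueness of the measure attached to a $q^2$-Schur generating function (noted after the definition of $\mathcal{S}$) is what lets us pass between the functional identity and the stated family $(P_k)$.
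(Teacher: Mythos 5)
Your proposal is correct and follows essentially the same route as the paper: expand each factor $\Phi_\omega(q^{-2(i-1)}z_i)$ in its Laurent series, pair the product against the deformed Vandermonde to read off the coefficient of $s_\lambda(z_1,q^{-2}z_2,\dots,q^{-2(k-1)}z_k)$ as $\det[\varphi_\omega(\lambda_i-i+j)]_{i,j=1}^k$, get nonnegativity from total positivity of $(\varphi_\omega(n))$, normalization by evaluating at $(1,\dots,1)$, and the coherence relation \eqref{eq:coh} trivially from the product structure. The only cosmetic difference is that you invoke the standard Cauchy--Binet-type expansion where the paper carries out the antisymmetrization and reindexing ($m_i=\lambda_i-i$) by hand.
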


\begin{remark}
For instance we have $\Phi_\omega(q^{-2(i-1)})>0$ if $\alpha^\pm_i$ and $\beta^\pm_i$ are zero except for finitely many $i$ and $\beta^-_1<(1-q^{2(N-1)})^{-1}, \alpha^+_1<(q^{-2(N-1)}-1)^{-1}$. Moreover, if $\alpha^-_i$ and $\beta^+_i$ are zero except fo finitely many $i$, $\alpha^+=\beta^-=(0,0,\dots)$, then $\mathcal{F}(z_1,\dots, z_N)$ is a $q^2$-Schur generating function satisfying Equation \eqref{eq:coh} for any $N\geq1$. Thus, there exists a corresponding quantized character $\chi_{q,\omega}$ of $U_q(\infty)$.
\end{remark}

\begin{proof}
Since Equation \eqref{eq:coh} is clear, it suffices to show that $\mathcal{F}(z_1,\dots, z_k)$ is a $q^2$-Schur generating function of some probability measure on $\mathbb{S}_k$. First we have
\begin{align*}
&V(z_1,q^{-2}z_2,\dots, q^{-2(k-1)}z_k)\mathcal{F}(z_1,\dots,z_k)\\
&=\frac{1}{\prod_{i=1}^k\Phi_\omega(q^{-2(i-1)})}\sum_{\sigma\in S(k)}\sum_{k_1,\dots,k_k\in\mathbb{Z}}\mathrm{sgn}(\sigma)\prod_{i=1}^k\varphi_\omega(k_i)(q^{-2(i-1)}z_i)^{k-\sigma(i)+k_i}\\
&=\frac{1}{\prod_{i=1}^k\Phi_\omega(q^{-2(i-1)})}\sum_{\sigma\in S(k)}\sum_{m_1,\dots,m_k\in\mathbb{Z}}\mathrm{sgn}(\sigma)\prod_{i=1}^k\varphi_\omega(m_i+\sigma(i))(q^{-2(i-1)}z_i)^{k+m_i}\\
&=\frac{1}{\prod_{i=1}^k\Phi_\omega(q^{-2(i-1)})}\sum_{m_1,\dots,m_k\in\mathbb{Z}}\det[\varphi_\omega(m_i+j)]_{i,j=1}^k\prod_{i=1}^k(q^{-2(i-1)}z_i)^{k+m_i}\\
&=\frac{1}{\prod_{i=1}^k\Phi_\omega(q^{-2(i-1)})}\sum_{m_1>\cdots>m_k}\sum_{\tau\in S(k)}\det[\varphi_\omega(m_{\tau(i)}+j)]_{i,j=1}^k\prod_{i=1}^k(q^{-2(i-1)}z_i)^{k+m_{\tau(i)}}\\
&=\frac{1}{\prod_{i=1}^k\Phi_\omega(q^{-2(i-1)})}\sum_{m_1>\cdots>m_k}\det[\varphi_\omega(m_i+j)]_{i,j=1}^k\det[(q^{-2(i-1)}z_i)^{k+m_j}]_{i,j=1}^k.
\end{align*}
Thus, we have
\[\mathcal{F}(z_1,\dots, z_k)=\sum_{\lambda\in\mathbb{S}_k}P_k(\lambda)\frac{s_\lambda(z_1, q^{-2}z_2,\dots, q^{-2(k-1)}z_k)}{s_\lambda(1, q^{-2},\dots, q^{-2(k-1)})},\]
where
\[P_k(\lambda):=\frac{\det[\varphi_\omega(\lambda_i-i+j)]_{i,j=1}^k}{\prod_{i=1}^k\Phi_\omega(q^{-2(i-1)})}s_\lambda(1, q^{-2},\dots, q^{-2(k-1)})\geq0\]
since each factor is nonnegative. Moreover, substituting $(1,\dots, 1)$ for $(z_1,\dots, z_k)$, we have $\sum_{\lambda\in\mathbb{S}_k}P_k(\lambda)=1$.
\end{proof}

By the above lemma, there exists a quantized character $\chi^N_{q, \omega}\in\mathrm{Ch}(U_q(N))$ such that the corresponding $q^2$-Schur generating functions are given as Equation \eqref{eq:q-char}.
\begin{theorem}\label{thm:q-pubg}
Let $\omega=(\alpha^+,\beta^+,\alpha^-,\beta^-,\gamma^+,\gamma^-)\in\Omega$ satisfying the assumption in Lemma \ref{lem:q-char} and let $\chi^N_{q,\omega}$ be the corresponding quantized character of $U_q(N)$. Then the generator $\mathbb{L}_{\chi^N_{q,\omega}}$ of the Markov semigroup associated with $\chi^N_{q, \omega}$ in Theorem \ref{thm:jun} is given by
\[\mathbb{L}_{\chi^N_{q,\omega}}(\lambda, \mu)=\frac{s_\mu(1, q^{-2},\dots, q^{-2(N-1)})}{s_\lambda(1, q^{-2},\dots, q^{-2(N-1)})}\frac{\det[\varphi_\omega(\mu_j-j-\lambda_i+i)]_{i,j=1}^N}{\prod_{i=1}^N\Phi_\omega(q^{-2(i-1)})}-\delta_{\lambda, \mu}\]
for any $\lambda,\mu\in\mathbb{S}_N$. 
\end{theorem}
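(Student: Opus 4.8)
The plan is to follow the pattern of Theorem~\ref{thm:kuma}, replacing the torus integral by an expansion of $q^2$-Schur generating functions. By Theorem~\ref{thm:jun} we have $\mathbb{L}_{\chi^N_{q,\omega}}=\mathbb{Q}_{\chi^N_{q,\omega}}-\mathrm{id}$, so it suffices to compute the Markov kernel $\mathbb{Q}_{\chi^N_{q,\omega}}$. Since $\chi^N_{q,\omega}$ is an $\alpha_0$-invariant state, Proposition~\ref{prop:dynamics_char} applies with $P=\delta_\lambda$ and $\varphi=\chi^N_{q,\omega}$, and it shows that the row $\mathbb{Q}_{\chi^N_{q,\omega}}(\lambda,\,\cdot\,)$, viewed as an element of $\mathcal{M}_p(\mathbb{S}_N)$, is exactly the probability measure $P_\lambda$ whose associated quantized character of $U_q(N)$ is $\chi_{q,\lambda}*\chi^N_{q,\omega}$. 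Thus $\mathbb{Q}_{\chi^N_{q,\omega}}(\lambda,\mu)=P_\lambda(\mu)$, and everything reduces to identifying $P_\lambda$.

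The next step is to record that the restriction to $\mathbb{T}^N$ (equivalently, the $q^2$-Schur generating function) is multiplicative with respect to $*$: convolving a quantized character of $U_q(N)$ with $\chi_{q,\lambda}$ multiplies its $q^2$-Schur generating function by $s_\lambda(z_1,q^{-2}z_2,\dots,q^{-2(N-1)}z_N)/s_\lambda(1,q^{-2},\dots,q^{-2(N-1)})$. By Corollary~\ref{cor:pubg} this is equivalent to the coincidence of the structure constants $N^\mu_{\lambda,\nu}$ with the Littlewood--Richardson coefficients, i.e. to the fact that $U_q(N)$ has the same tensor product decompositions as $U(N)$ (see \cite{NoumiYamadaMimachi}, \cite{Sato1}). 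Combining this with the formula \eqref{eq:q-char} for the $q^2$-Schur generating function of $\chi^N_{q,\omega}$ provided by Lemma~\ref{lem:q-char}, we obtain
\[\mathcal{S}(z_1,\dots,z_N;P_\lambda)=\frac{s_\lambda(z_1,q^{-2}z_2,\dots,q^{-2(N-1)}z_N)}{s_\lambda(1,q^{-2},\dots,q^{-2(N-1)})}\prod_{i=1}^N\frac{\Phi_\omega(q^{-2(i-1)}z_i)}{\Phi_\omega(q^{-2(i-1)})}.\]

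Finally I would expand the right-hand side in the basis $\{\,s_\mu(z_1,q^{-2}z_2,\dots,q^{-2(N-1)}z_N)/s_\mu(1,q^{-2},\dots,q^{-2(N-1)})\,\}_{\mu\in\mathbb{S}_N}$ and read off $P_\lambda(\mu)$. Writing $w_i:=q^{-2(i-1)}z_i$, substituting $s_\lambda(w)=\det[w_i^{\lambda_j+N-j}]_{i,j=1}^N/V_N(w)$ together with $\Phi_\omega(w_i)=\sum_{n}\varphi_\omega(n)w_i^n$, and then performing the Cauchy--Binet (Andr\'eief) rearrangement exactly as in the chain of equalities in the proof of Lemma~\ref{lem:q-char}, the product $V_N(w)\,s_\lambda(w)\prod_{i=1}^N\Phi_\omega(w_i)$ collapses to $\sum_{\mu\in\mathbb{S}_N}\det[\varphi_\omega(\mu_j-j-\lambda_i+i)]_{i,j=1}^N\,V_N(w)\,s_\mu(w)$. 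Dividing by $V_N(w)$, normalizing and matching coefficients gives
\[P_\lambda(\mu)=\frac{s_\mu(1,q^{-2},\dots,q^{-2(N-1)})}{s_\lambda(1,q^{-2},\dots,q^{-2(N-1)})}\frac{\det[\varphi_\omega(\mu_j-j-\lambda_i+i)]_{i,j=1}^N}{\prod_{i=1}^N\Phi_\omega(q^{-2(i-1)})},\]
and subtracting $\delta_{\lambda,\mu}$ yields the asserted formula. The main obstacle is the bookkeeping in this last step---keeping track of the shifts $w_i=q^{-2(i-1)}z_i$, the transposition of indices inside the determinant, and the permutation used to reorder the summation indices into a strictly decreasing tuple---but it is structurally identical to the computation already done for Lemma~\ref{lem:q-char}, so no genuinely new idea beyond the multiplicativity in the second step is needed.
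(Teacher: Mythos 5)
Your proposal is correct and follows essentially the same route as the paper: both reduce the generator to $\mathbb{Q}_{\chi^N_{q,\omega}}-\mathrm{id}$, identify the transition probabilities with the irreducible decomposition of $\chi_{q,\lambda}*\chi^N_{q,\omega}$ (Proposition \ref{prop:dynamics_char}, which is exactly what Corollary \ref{cor:pubg} packages), and exploit that $U_q(N)$ has the fusion rules of $U(N)$ together with the determinantal expansion of $\prod_i\Phi_\omega(q^{-2(i-1)}z_i)$ against Schur functions. The only difference is cosmetic: you extract the coefficients by the Laurent-series rearrangement of Lemma \ref{lem:q-char} and uniqueness of $q^2$-Schur generating functions, whereas the paper extracts them via the torus integral as in Theorem \ref{thm:kuma}; both computations are the same orthogonality argument in different clothing.
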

\begin{proof}
Remark that the quantum dimension $d_q(\lambda)$ of irreducible corepresentation associated with $\lambda\in\mathbb{S}_N$ coincides with $s_\lambda(q^{N-1}, q^{N-3},\dots, q^{-N+1})$. Since $U_q(N)$ has the same fusion rules of $U(N)$, by Corollary \ref{cor:pubg}, we have
\begin{align*}
&\mathbb{L}_{\chi^N_{q,\omega}}(\lambda, \mu)\\
&=\frac{s_\mu(q^{N-1},q^{N-3},\dots,q^{-N+1})}{s_\lambda(q^{N-1},q^{N-3},\dots,q^{-N+1})}\frac{1}{N!}\int_{\mathbb{T}^N}\mathcal{F}(q^{-N+1}z_1,\dots,q^{N-1}z_N)s_\lambda(z)\overline{s_\mu(z)}|V(z)|^2\prod_{i=1}^N\frac{dz_i}{2\pi\mathrm{i}z_i}-\delta_{\lambda, \mu}.
\end{align*}
By a similar computation in the proof of Theorem \ref{thm:kuma}, we obtain the statement.
\end{proof}

\section{On inductive systems of compact quantum groups}\label{sec:ind}
In this section, we extend the previous results in Section \ref{sec:cqg} to inductive systems of compact quantum groups. We assume that $(G_N)_{N=0}^\infty$ is an inductive system of compact quantum group and $G_0$ is the trivial quantum group. Namely, $C(G_0)=\mathbb{C}$, and $G_N=(C(G_N), \delta_{G_N})$ are compact quantum groups. Moreover, there are the so-called \emph{restriction maps} $\theta_N\colon C(G_N)\to C(G_{N-1})$, i.e., $\theta_N$ is a surjective $*$-homomorphism such that $(\theta_N\otimes\theta_N)\delta_{G_N}=\delta_{G_{N-1}}\theta_N$. We denote by $\mathcal{W}^*(G_N)=(W^*(G_N),\hat\delta_{G_N}, \hat R_{G_N}, \hat\tau^{G_N}, \hat h_{G_N})$ the Woronowicz algebras of group von Neumann algebras $W^*(G_N)$. Let $h_{G_N}$ be the Haar state of $\mathcal{L}^\infty(G_N)$ and $(\pi_{h_{G_N}}, L^2(G_N), \eta_{h_{G_N}})$ the associated GNS-triple. We define the \emph{Kac--Takesaki operator} $\hat W_{G_N}$ as $\hat W_{G_N}:=\sigma_{L^2(G_N)}W_{G_N}^*\sigma_{L^2(G_N)}$, where $W_{G_N}\colon L^2(G_N)\otimes L^2(G_N)\to L^2(G_N)\otimes L^2(G)$ is given by 
\[W_{G_N}\xi\otimes\eta_{h_{G_N}}(a):=(\pi_{h_{G_N}}\otimes\pi_{h_{G_N}})(\delta_{G_N}(a))\xi\otimes\eta_{h_{G_N}}(1)\] 
for any $\xi\in L^2(G_N)$ and $a\in C(G_N)$. By \cite[Lemma 2.10]{Tomatsu}, there exist faithful normal unital $*$-homomorphisms $\Theta_N\colon W^*(G_N)\to W^*(G_{N+1})$ such that $\Theta_N(\pi_{G_N}(\omega))=(\mathrm{id}\otimes\omega\theta_{N+1})(\hat W_{G_N+1})$ for any $\omega\in L^\infty(G_N)_*$. Moreover, we have
\begin{equation}\label{eq:cole}
(\Theta_N\otimes\Theta_N)\hat\delta_{G_N}=\hat\delta_{G_{N+1}}\Theta_N,\quad \Theta_N\hat\tau^{G_N}_t=\hat\tau^{G_{N+1}}_t\Theta_N,\quad\Theta_N\hat R_{G_N}=\hat R_{G_{N+1}}\Theta_N.
\end{equation}
Therefore, we have the inductive limit quantum group $W^*$-algebra $(W^*(G_\infty), \mathfrak{A}, \hat\delta_{G_\infty}, \hat R_{G_\infty}, \hat\tau^{G_\infty})$ in the sense of \cite{Sato3}, that is, 
\begin{itemize}
\item $W^*(G_\infty):=\varinjlim_N (W^*(G_N), \Theta_N)$ is the inductive limit $W^*$-algebra in the sense of Takeda \cite{Takeda},
\item $\mathfrak{A}$ is the Stratila--Voiculescu AF-algebra of the inductive system $(G_N)_{N=0}^\infty$,
\item $\hat\delta_{G_\infty}\colon W^*(G_\infty)\to W^*(G_\infty)\bar\otimes W^*(G_\infty)$ is a comultiplication, that is, unital normal $*$-homomorphism satisfying that $(\mathrm{id}\otimes \hat\delta_{G_\infty})\hat\delta_{G_\infty}=(\hat\delta_{G_\infty}\otimes\mathrm{id})\hat\delta_{G_\infty}$,
\item $\hat R_{G_\infty}$ is a unitary antipode, that is, an involutive normal $*$-anti-automorphism satisfying that $\hat\delta_{G_\infty}\hat R_{G_\infty}=\sigma_{W^*(G_\infty)}(\hat R_{G_\infty}\otimes\hat R_{G_\infty})\hat\delta_{G_\infty}$,
\item $\hat\tau^{G_\infty}=\{\hat\tau^{G_\infty}_t\}_{t\in\mathbb{R}}$ is a deformation automorphism group, that is, a one-parameter automorphism group on $W^*(G_\infty)$ satisfying that $(\hat\tau^{G_\infty}_t\otimes\hat\tau^{G_\infty}_t)\hat\delta_{G_\infty}=\hat\delta_{G_\infty}\hat\tau^{G_\infty}_t$ and $\hat\tau^{G_\infty}_t\hat R_{G_\infty}=\hat R_{G_\infty}\tau^{G_\infty}_t$ for any $t\in\mathbb{R}$.
\end{itemize} 

Remark that there exist normal unital $*$-homomorphism $\Theta_N^\infty\colon W^*(G_N)\to W^*(G_\infty)$ such that $\Theta^\infty_{N+1}\Theta_N=\Theta^\infty_N$ for any $N\geq0$. For a von Neumann algebra $M$, if there are normal unital $*$-homomorphisms $\Phi_N\colon W^*(G_N)\to M$ such that $\Phi_{N+1}\Theta_N=\Phi_N$, then there exists a normal unital $*$-homomorphism $\Phi_\infty\colon W^*(G_\infty)\to M$ such that $\Phi_\infty\Theta^\infty_N=\Phi_N$ for any $N\geq0$.

We define a \emph{quantized character} of $G_\infty$ as a normal $\hat\tau^{G_\infty}$-KMS state (see \cite{Sato3}). We denote by $\mathrm{Ch}(G_\infty)$ the set of all quantized characters of $G_\infty$. Then we equip $\mathrm{Ch}(G_\infty)$ with the topology of point-wise convergence on the Stratila--Voiculescu AF-algebra $\mathfrak{A}$. Let $\mathcal{E}:=\mathrm{ex}(\mathrm{Ch}(G_\infty))$ equipped with the relative topology induced by $\mathrm{Ch}(G_\infty)$. We recall that $\mathrm{Ch}(G_\infty)$ is a simplex, that is, there exists a one-to-one correspondence $\chi\in\mathrm{Ch}(G_\infty)\mapsto P_\chi\in\mathcal{M}_p(\mathcal{E})$ such that $\chi=\int_{\mathcal{E}}\chi_\omega dP_\chi(\chi_\omega)$. See \cite[Corollary 3.1]{Sato3}.

For any $\omega\in L^\infty(G_N)_*$ we have $\hat\epsilon_{G_{N+1}}(\Theta_N(\pi_{G_N}(\omega)))=\omega(\theta_{N+1}(1))=\omega(1)=\hat\epsilon_{G_N}(\pi_{G_N}(\omega))$. Thus, there exits a normal $*$-homomorphism $\hat\epsilon_\infty\colon W^*(G_\infty)\to\mathbb{C}$ such that $\hat\epsilon_\infty\Theta^\infty_N=\hat\epsilon_{G_N}$ for any $N\geq0$. We have $(\mathrm{id}\otimes\hat\epsilon_\infty)\hat\delta_{G_\infty}=(\hat\epsilon_\infty\otimes\mathrm{id})\hat\delta_{G_\infty}=\mathrm{id}$ since the equation holds true on $\bigcup_{N\geq0}\Theta^\infty_N(W^*(G_N))$ and it is $\sigma$-weakly dence in $W^*(G_\infty)$. In this section, for any $\chi\in\mathrm{Ch}(G_\infty)$ we study the dynamics $((\mathrm{id}\otimes\varphi_t)\hat\delta_{G_\infty})_{t\geq0}$, where $\varphi_t=\exp_*(t(\chi-\hat\epsilon_\infty))$.

\begin{lemma}
Let $\chi\in\mathrm{Ch}(G_\infty)$. Then $\varphi_t=\exp_*(t(\chi-\hat\epsilon_\infty))\in\mathrm{Ch}(G_\infty)$.
\end{lemma}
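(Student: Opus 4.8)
The plan is to mimic the proof of Lemma \ref{lem:yosa}, but now over the inductive limit $W^*(G_\infty)$ instead of $W^*(G)$. The only structural fact we used there was that each convolution power $\chi^{*k}$ is a $\hat\tau^G$-KMS state with inverse temperature $-1$; the rest was a uniform-convergence argument on the strip $\bar D$. So the real content here is to verify that the convolution powers $\chi^{*k}$ of a quantized character $\chi\in\mathrm{Ch}(G_\infty)$ are again normal $\hat\tau^{G_\infty}$-KMS states, i.e.\ that $\mathrm{Ch}(G_\infty)$ is closed under the convolution $*$ coming from $\hat\delta_{G_\infty}$. Granting that, the analytic part of the argument transfers essentially verbatim.

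First I would record that $\mathrm{Ch}(G_\infty)$ is closed under $*$. Given $\chi,\phi\in\mathrm{Ch}(G_\infty)$ and $x,y\in W^*(G_\infty)$, the function $s\mapsto (\chi*\phi)(x\hat\tau^{G_\infty}_s(y))$ should extend to a bounded continuous function on $\bar D$, analytic in $D$, with the expected boundary value on $\mathrm{Im}(z)=-1$: this follows because $\hat\delta_{G_\infty}$ intertwines $\hat\tau^{G_\infty}$ with $\hat\tau^{G_\infty}\otimes\hat\tau^{G_\infty}$ (one of the defining relations listed in Section \ref{sec:ind}), so $(\chi\otimes\phi)(\hat\delta_{G_\infty}(x\hat\tau^{G_\infty}_s(y))) = (\chi\otimes\phi)\big((\hat\tau^{G_\infty}_s\otimes\hat\tau^{G_\infty}_s)$ applied appropriately$\big)$, and one can analytically continue in the two strip variables separately and then restrict to the diagonal, using the product-of-two-KMS-functions estimate of \cite[Proposition 5.3.7]{BratteliRobinson2} to keep everything bounded. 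Normality of $\chi*\phi$ is immediate since $\hat\delta_{G_\infty}$ is normal and $\chi\otimes\phi$ is normal on $W^*(G_\infty)\bar\otimes W^*(G_\infty)$. This is really the analogue of \cite[Lemma 3.2]{Sato3} for the inductive limit; if that statement is already available it can simply be cited, and this paragraph collapses.

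Next, with $\chi^{*k}\in\mathrm{Ch}(G_\infty)$ in hand for every $k\geq0$ (taking $\chi^{*0}=\hat\epsilon_\infty$, which is a $\hat\tau^{G_\infty}$-KMS state since it is $\hat\tau^{G_\infty}$-invariant), I would fix $x,y\in W^*(G_\infty)$ and let $F_k$ be the bounded continuous function on $\bar D$, analytic in $D$, with $F_k(s)=\chi^{*k}(x\hat\tau^{G_\infty}_s(y))$ and $F_k(s-\mathrm{i})=\chi^{*k}(\hat\tau^{G_\infty}_s(y)x)$, and $|F_k|\leq\|x\|\|y\|$ on $\bar D$ by \cite[Proposition 5.3.7]{BratteliRobinson2}. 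Then, using $(\chi-\hat\epsilon_\infty)^{*n}=\sum_{k=0}^n(-1)^k\binom{n}{k}\chi^{*k}$ and $\varphi_t=\exp_*(t(\chi-\hat\epsilon_\infty))=\sum_n \tfrac{t^n}{n!}(\chi-\hat\epsilon_\infty)^{*n}$, I would set
\[
F:=\sum_{n=0}^\infty\frac{t^n}{n!}\sum_{k=0}^n(-1)^k\binom{n}{k}F_k,
\]
which converges uniformly on $\bar D$ because $\sum_n \tfrac{t^n}{n!}\sum_{k=0}^n\binom{n}{k}\|x\|\|y\| = e^{2t}\|x\|\|y\|<\infty$; so $F$ is bounded and continuous on $\bar D$, analytic in $D$, with $|F|\leq e^{2t}\|x\|\|y\|$. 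Term-by-term one reads off $F(s)=\varphi_t(x\hat\tau^{G_\infty}_s(y))$ and $F(s-\mathrm{i})=\varphi_t(\hat\tau^{G_\infty}_s(y)x)$. Since $\varphi_t$ is a state (by the Schoenberg correspondence, exactly as in Theorem \ref{thm:preserving_semigroup}) and normal (a norm-convergent series of normal functionals), it is a normal $\hat\tau^{G_\infty}$-KMS state, i.e.\ $\varphi_t\in\mathrm{Ch}(G_\infty)$.

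The main obstacle is the first paragraph: establishing that $\chi*\phi$ is $\hat\tau^{G_\infty}$-KMS for $\chi,\phi\in\mathrm{Ch}(G_\infty)$. Over a single compact quantum group this is \cite[Lemma 3.2]{Sato3}, but here one must check that the two-variable analytic continuation and the associated norm bounds survive passage to the inductive limit $W^*(G_\infty)=\varinjlim_N W^*(G_N)$; the cleanest route is probably to verify the KMS identity first on the $\sigma$-weakly dense subalgebra $\bigcup_N\Theta^\infty_N(W^*(G_N))$, where each piece reduces to the finite-level statement for $\chi\Theta^\infty_N,\phi\Theta^\infty_N\in\mathrm{Ch}(G_N)$ and the compatibility \eqref{eq:cole} of $\hat\delta$ and $\hat\tau$ with the $\Theta_N$, and then extend by normality and the uniform bound $\|x\|\|y\|$. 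Everything else is the routine uniform-convergence bookkeeping already carried out in Lemma \ref{lem:yosa}.
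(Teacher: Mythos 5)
Your argument is correct in substance, but it takes a genuinely different route from the paper. You work directly at the infinite level: you first need that $\mathrm{Ch}(G_\infty)$ is closed under convolution (so that each $\chi^{*k}$ is a normal $\hat\tau^{G_\infty}$-KMS state), and then you rerun the strip/uniform-convergence argument of Lemma \ref{lem:yosa} verbatim on $W^*(G_\infty)$. The paper instead reduces everything to the finite levels: since $\Theta^\infty_N$ intertwines the comultiplications and counits, $\varphi_t\Theta^\infty_N=\exp_*\bigl(t(\chi\Theta^\infty_N-\hat\epsilon_{G_N})\bigr)$, which lies in $\mathrm{Ch}(G_N)$ by Lemma \ref{lem:yosa}; the known bijection between $\mathrm{Ch}(G_\infty)$ and coherent families of quantized characters of the $G_N$ (from \cite{Sato1}, \cite{Sato3}) then produces a quantized character $\varphi$ with these restrictions, and $\varphi=\varphi_t$ follows from normality of $\varphi_t$ and $\sigma$-weak density of $\bigcup_N\Theta^\infty_N(W^*(G_N))$. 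The paper's route is shorter because it never has to re-verify the KMS analytic continuation at the limit, outsourcing that to the finite-level lemma and the coherent-system correspondence; your route is more self-contained at the infinite level, and its key input is legitimately citable, since the paper itself invokes \cite[Lemma 3.2]{Sato3} for convolution-closure of $\mathrm{Ch}(G_\infty)$ in the definition immediately following this lemma. One caveat: your fallback sketch for proving that closure from scratch (check the KMS identity on $\bigcup_N\Theta^\infty_N(W^*(G_N))$ and ``extend by normality'') glosses over a genuinely nontrivial step, since the KMS condition does not pass to $\sigma$-weak limits in $x,y$ without an additional approximation argument (Kaplansky density plus a normal-family/Vitali-type argument on the strip); if you go that way, either supply that argument or simply cite \cite[Lemma 3.2]{Sato3}, as the paper does. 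Your remark that positivity of $\varphi_t$ comes from the Schoenberg correspondence is also avoidable at this level: once $\mathrm{Ch}(G_\infty)$ is convolution-closed, $\varphi_t=e^{-t}\sum_{n\geq0}t^n\chi^{*n}/n!$ is manifestly a norm-convergent convex combination of normal states.
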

\begin{proof}
By Lemma \ref{lem:yosa}, $\varphi_t\Theta^\infty_N=\exp_*(t(\chi\Theta^\infty_N-\hat\epsilon_{G_N}))\in\mathrm{Ch}(G_N)$ since $\chi\Theta^\infty_N\in\mathrm{Ch}(G_N)$. Thus, there exists a unique quantized character $\varphi\in\mathrm{Ch}(G_\infty)$ such that $\varphi\Theta^\infty_N=\varphi_t\Theta^\infty_N$ for any $N\geq0$. See \cite{Sato1}, \cite{Sato3}. Since $\bigcup_{N\geq0}\Theta^\infty_N(W^*(G_N))$ is $\sigma$-weakly dense in $W^*(G_\infty)$, we have $\varphi=\varphi_t$, i.e., $\varphi_t\in\mathrm{Ch}(G_\infty)$.
\end{proof}

Now we define our main object in this section.
\begin{definition}
By the above lemma and \cite[Lemma 3.2]{Sato3}, for $\chi\in\mathrm{Ch}(G_\infty)$ we obtain the one-parameter semigroup on $\mathrm{Ch}(G_\infty)$ by $\varphi\mapsto\varphi*\varphi_t$, where $\varphi_t=\exp_*(t(\chi-\hat\epsilon_\infty))$ for any $t\geq0$. Then we have the corresponding semigroup $(Q^\chi_t)_{t\geq0}$ on $\mathcal{M}_p(\mathcal{E})$ such that $Q^\chi_t(P_\varphi)=P_{\varphi*\varphi_t}$, where $P_\varphi, P_{\varphi*\varphi_t}$ are the Borel probability measures corresponding to $\varphi, \varphi*\varphi_t\in\mathrm{Ch}(G_\infty)$, respectively. 
\end{definition}
The purpose of this section is studying the above dynamics $(Q^\chi_t)_{t\geq0}$, and we particularly show the dynamics $(Q^\chi_t)_{t\geq0}$ coincide with the ``projective limits" of Markov dynamics on the $\widehat{G_N}$. See Proposition \ref{prop:contact}.

\begin{lemma}
Let $\chi\in\mathrm{Ch}(G_\infty)$, $\varphi_t=\exp_*(t(\chi-\hat\epsilon_\infty))$ and $T_t=(\mathrm{id}\otimes\varphi_t)\hat\delta_{G_\infty}$. Then $T_t$ preserves $\Theta^\infty_N(W^*(G_N))$ and $\Theta^\infty_N(Z(W^*(G_N)))$ for any $N\geq0$.
\end{lemma}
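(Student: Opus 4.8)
The plan is to reduce everything to the finite-level statement already established. The key point is that $\varphi_t \in \mathrm{Ch}(G_\infty)$, so its restriction $\varphi_t \Theta^\infty_N = \exp_*(t(\chi\Theta^\infty_N - \hat\epsilon_{G_N}))$ lies in $\mathrm{Ch}(G_N)$; write $\psi_N := \varphi_t \Theta^\infty_N \in W^*(G_N)_*$. First I would show that $T_t$ intertwines with $\Theta^\infty_N$ via the finite-level transformation. Precisely, using the first relation in \eqref{eq:cole} (which passes to the limit to give $(\Theta^\infty_N \otimes \Theta^\infty_N)\hat\delta_{G_N} = \hat\delta_{G_\infty}\Theta^\infty_N$) and the fact that $\varphi_t \circ \Theta^\infty_N = \psi_N$, for $x \in W^*(G_N)$ we compute
\begin{align*}
T_t(\Theta^\infty_N(x))
&= (\mathrm{id}\otimes\varphi_t)\hat\delta_{G_\infty}(\Theta^\infty_N(x))\\
&= (\mathrm{id}\otimes\varphi_t)(\Theta^\infty_N\otimes\Theta^\infty_N)(\hat\delta_{G_N}(x))\\
&= \Theta^\infty_N\big((\mathrm{id}\otimes\psi_N)(\hat\delta_{G_N}(x))\big)\\
&= \Theta^\infty_N\big(T^{\chi_N}_t(x)\big),
\end{align*}
where $T^{\chi_N}_t = (\mathrm{id}\otimes\psi_N)\hat\delta_{G_N}$ is the finite-level $CP_0$-semigroup from Corollary \ref{cor:acid} (with $\chi_N := \chi\Theta^\infty_N \in \mathrm{Ch}(G_N)$, noting $\psi_N = \exp_*(t(\chi_N - \hat\epsilon_{G_N}))$). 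This identity immediately gives $T_t(\Theta^\infty_N(W^*(G_N))) \subseteq \Theta^\infty_N(W^*(G_N))$.

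For the center, I would invoke that $T^{\chi_N}_t$ is center-preserving: by Corollary \ref{cor:acid} (equivalently Theorem \ref{thm:preserving_semigroup}, since $\chi_N$ is $\alpha_0$-invariant by Lemma \ref{lem:char_inv}), $T^{\chi_N}_t(Z(W^*(G_N))) \subseteq Z(W^*(G_N))$. Combined with the intertwining identity, $T_t(\Theta^\infty_N(Z(W^*(G_N)))) = \Theta^\infty_N(T^{\chi_N}_t(Z(W^*(G_N)))) \subseteq \Theta^\infty_N(Z(W^*(G_N)))$, which is the second assertion.

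The only genuine point requiring care — and the place I expect to spend real effort — is justifying that the limiting comultiplication $\hat\delta_{G_\infty}$ restricts correctly, i.e. that $(\Theta^\infty_N\otimes\Theta^\infty_N)\hat\delta_{G_N} = \hat\delta_{G_\infty}\Theta^\infty_N$. This follows from the construction of $\hat\delta_{G_\infty}$ as the inductive limit comultiplication compatible with \eqref{eq:cole}: since $\Theta^\infty_{N+1}\Theta_N = \Theta^\infty_N$ and $(\Theta_N\otimes\Theta_N)\hat\delta_{G_N} = \hat\delta_{G_{N+1}}\Theta_N$, the maps $(\Theta^\infty_N\otimes\Theta^\infty_N)\hat\delta_{G_N}$ are coherent and hence factor through $\hat\delta_{G_\infty}$ by the universal property of the inductive limit (the same universal property recalled just before this lemma for $M = W^*(G_\infty)\bar\otimes W^*(G_\infty)$). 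Everything else is a routine application of the finite-level results, so no further obstacle is anticipated.
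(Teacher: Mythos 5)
Your proposal is correct and follows essentially the same route as the paper: the paper's proof is exactly the intertwining identity $T_t\Theta^\infty_N=\Theta^\infty_N(\mathrm{id}\otimes\varphi_t\Theta^\infty_N)\hat\delta_{G_N}$ together with $\varphi_t\Theta^\infty_N=\exp_*(t(\chi\Theta^\infty_N-\hat\epsilon_{G_N}))\in\mathrm{Ch}(G_N)$ and the finite-level preservation results. Your extra care about $(\Theta^\infty_N\otimes\Theta^\infty_N)\hat\delta_{G_N}=\hat\delta_{G_\infty}\Theta^\infty_N$ is fine; this compatibility is built into the inductive-limit construction of $\hat\delta_{G_\infty}$ in \cite{Sato3}, which the paper uses implicitly.
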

\begin{proof}
We have $T_t\Theta^\infty_N=\Theta^\infty_N(\mathrm{id}\otimes\varphi_t\Theta^\infty_N)\hat\delta_{G_N}$ and $\varphi_t\Theta^\infty_N=\exp_*(t(\chi\Theta^\infty_N-\hat\epsilon_{G_N}))$. Since $\chi\Theta^\infty_N\in\mathrm{Ch}(G_N)$, the linear map $(\mathrm{id}\otimes\varphi_t\Theta^\infty_N)\hat\delta_{G_N}$ preserves $W^*(G_N)$ and $Z(W^*(G_N))$.
\end{proof}

Let $V_{G_N}$ be the multiplicative unitary of $\mathcal{L}^\infty(G_N)$. We define $\alpha_0^{G_N}(x):=V_{G_N}(x\otimes 1)V_{G_N}^*$ and $E_{G_N}(x):=(\mathrm{id}\otimes h_{G_N})(\alpha_0^{G_N}(x))$ for any $x\in W^*(G_N)$. Recall that $\alpha_0^{G_N}$ is a right action of $\mathcal{L}^\infty(G_N)$ on $W^*(G_N)$, and $E_{G_N}$ is a normal conditional expectation from $W^*(G_N)$ onto $Z(W^*(G_N))$ (see Lemma \ref{lem:cond}). By \cite[Theorem 1.4.2]{NeshveyevTuset} and Corollary \ref{cor:hinata}, for any $x=\Phi_G^{-1}((A_\alpha)_{\alpha\in\widehat{G_N}})$ we have 
\[E_{G_N}(x)=\sum_{\alpha\in\widehat{G_N}}\frac{1}{d_q(\alpha)}\sum_{i,j=1}^{\dim H_\alpha}\Phi_{G_N}^{-1}(e^\alpha_{ij}F_\alpha Ae^\alpha_{ji})=\sum_{\alpha\in\widehat{G_N}}\chi_\alpha^0(A)\Phi_{G_N}^{-1}(1_{H_\alpha}),\]
where $(e^\alpha_{ij})_{i,j=1}^{\dim H_\alpha}$ is a matrix unit system of $B(H_\alpha)$ and $1_{H_\alpha}$ is the identity map on $H_\alpha$. See Remark \ref{rem:q-aina} for the definition of $\chi_\alpha^0$.

Here we refer to the paper by Ueda \cite{Ueda20}. He reformulated the spherical representation theory (due to Olshanski \cite{Olshanski90}, \cite{Olshanski03} ... etc) in the setting of $C^*$-algebras with flows. Then, by replacing $\widehat{G_N}$ with the set of minimal projections of the center of $W^*(G_N)$, we obtain the same formula of $E_{G_N}$. Moreover, the following investigation of \emph{links} of the $\widehat G_N$ also extends to the general setting (see \cite[Section 7, 8]{Ueda20}): Recall that $Z(W^*(G_N))$ and $\ell^\infty(\widehat{G_N})$ are $*$-isomorphic. Thus, we obtain Markov kernels $\Lambda_N^{N+1}$ from $\widehat{G_{N+1}}$ to $\widehat{G_N}$ by $E_{G_{N+1}}\Theta_N|_{W^*(G_N)}\colon Z(W^*(G_N))\to Z(W^*(G_{N+1}))$. Then $\Lambda_N^{N+1}$ induces the map $\mathcal{M}_p(\widehat{G_{N+1}})\to\mathcal{M}_p(\widehat{G_N})$, and $\varprojlim_N\mathcal{M}_p(\widehat{G_N})$ is nothing but the set of so-called \emph{coherent systems}. We equip $\varprojlim_N\mathcal{M}_p(\widehat G_N)$ with the topology of component-wise weakly convergence. Then we have affine homeomorphism between $\varprojlim_N\mathcal{M}_p(\widehat G_N)$ and $\mathcal{M}_p(\mathcal{E})$. See \cite[Proposition 7.10, Corollary 8.3(3)]{Ueda20}. Namely $\mathcal{E}$ is the \emph{boundary} of $(\widehat{G_N})_{N=1}^\infty$ with the links $(\Lambda_N^{N+1})_{N=1}^\infty$ in the sense of Boridin--Olshanski \cite{BO}. Then for any $N\geq1$ we obtain the Markov kernel $\Lambda^\infty_N$ from $\mathcal{E}$ to $\widehat{G_N}$ such that for any $P\in\mathcal{M}_p(\mathcal{E})$ the corresponding coherent system is $(P\Lambda^\infty_N)_{N\geq1}$.

Recall that for any $\psi\in W^*(G_N)_*$ we define $Q_\psi:=(\mathrm{id}\otimes\psi)\hat\delta_{G_N}|_{Z(W^*(G_N))}$.
\begin{lemma}\label{lem:hayabusa}
Let $\psi_{N+1}\in W^*(G_{N+1})_*$ be a $\alpha_0^{G_{N+1}}$-invariant functional and $\psi_N:=\psi_{N+1}\Theta_N$. Then $\Psi_NQ_{\psi_N}=Q_{\psi_{N+1}} \Psi_N$.
\end{lemma}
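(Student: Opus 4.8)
The plan is to split the identity into two elementary commutation relations and then chain them. Write $T_\psi:=(\mathrm{id}\otimes\psi)\hat\delta_{G_M}$ for a normal functional $\psi$ on $W^*(G_M)$, so that $Q_{\psi_N}$ is the restriction of $T_{\psi_N}$ to $Z(W^*(G_N))$; here we use that $\psi_N=\psi_{N+1}\Theta_N$ is again $\alpha_0^{G_N}$-invariant, so that this restriction does map $Z(W^*(G_N))$ into itself by Lemma \ref{lem:center_preserving} (and likewise for $N+1$), and that $\Psi_N=E_{G_{N+1}}\Theta_N|_{Z(W^*(G_N))}$. I would establish (A) $\Theta_N\circ T_{\psi_N}=T_{\psi_{N+1}}\circ\Theta_N$ on $W^*(G_N)$, and (B) $E_{G_{N+1}}\circ T_{\psi_{N+1}}=T_{\psi_{N+1}}\circ E_{G_{N+1}}$ on $W^*(G_{N+1})$. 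Granting both, for $x\in Z(W^*(G_N))$ one computes
\[
\Psi_N\bigl(Q_{\psi_N}(x)\bigr)
= E_{G_{N+1}}\Theta_N\bigl(T_{\psi_N}(x)\bigr)
= E_{G_{N+1}}\bigl(T_{\psi_{N+1}}(\Theta_N x)\bigr)
= T_{\psi_{N+1}}\bigl(E_{G_{N+1}}\Theta_N x\bigr)
= Q_{\psi_{N+1}}\bigl(\Psi_N(x)\bigr),
\]
where the second equality is (A), the third is (B), and the outer two use the definitions of $\Psi_N$ and $Q_{\psi_{N+1}}$, the element $E_{G_{N+1}}\Theta_N x$ already lying in $Z(W^*(G_{N+1}))$.

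Relation (A) is immediate from the first identity in \eqref{eq:cole}: applying $(\mathrm{id}\otimes\psi_{N+1})$ to $(\Theta_N\otimes\Theta_N)\hat\delta_{G_N}=\hat\delta_{G_{N+1}}\Theta_N$ and substituting $\psi_N=\psi_{N+1}\Theta_N$ yields $\Theta_N(\mathrm{id}\otimes\psi_N)\hat\delta_{G_N}=(\mathrm{id}\otimes\psi_{N+1})\hat\delta_{G_{N+1}}\Theta_N$.

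Relation (B) is the heart of the matter. I would first prove the auxiliary identity $(T_{\psi_{N+1}}\otimes\mathrm{id})\circ\alpha_0^{G_{N+1}}=\alpha_0^{G_{N+1}}\circ T_{\psi_{N+1}}$ on $W^*(G_{N+1})$ by applying the slice $(\mathrm{id}\otimes\psi_{N+1}\otimes\mathrm{id})$, with $\psi_{N+1}$ on the middle leg, to Equation \eqref{eq:action} for $\mathcal{L}^\infty(G_{N+1})$, i.e. to $(\hat\delta_{G_{N+1}}\otimes\mathrm{id})\alpha_0^{G_{N+1}}=\mathrm{Ad}(V_{G_{N+1}})_{13}\,(\mathrm{id}\otimes\alpha_0^{G_{N+1}})\hat\delta_{G_{N+1}}$. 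On the left this slice produces $(T_{\psi_{N+1}}\otimes\mathrm{id})\alpha_0^{G_{N+1}}$; on the right, $(V_{G_{N+1}})_{13}$ does not act on the middle leg so the slice passes through $\mathrm{Ad}(V_{G_{N+1}})_{13}$, and by $\alpha_0^{G_{N+1}}$-invariance of $\psi_{N+1}$ the middle slice of $(\mathrm{id}\otimes\alpha_0^{G_{N+1}})\hat\delta_{G_{N+1}}(x)$ collapses to $T_{\psi_{N+1}}(x)\otimes 1$, leaving $\mathrm{Ad}V_{G_{N+1}}\bigl(T_{\psi_{N+1}}(x)\otimes 1\bigr)=\alpha_0^{G_{N+1}}(T_{\psi_{N+1}}(x))$. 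Applying $(\mathrm{id}\otimes h_{G_{N+1}})$ to this auxiliary identity and using that the normal map $T_{\psi_{N+1}}$ commutes with slicing off the last leg by the normal state $h_{G_{N+1}}$ gives (B): $E_{G_{N+1}}(T_{\psi_{N+1}}x)=(\mathrm{id}\otimes h_{G_{N+1}})\bigl((T_{\psi_{N+1}}\otimes\mathrm{id})\alpha_0^{G_{N+1}}(x)\bigr)=T_{\psi_{N+1}}(E_{G_{N+1}}x)$.

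The main obstacle is relation (B), and within it the only delicate point is keeping the leg-numbering straight when slicing Equation \eqref{eq:action}; everything else is formal manipulation with slice maps, normality, and \eqref{eq:cole}. I note that the $E$-versus-$T_\psi$ commutation established here is a general fact valid in any Woronowicz algebra with counit for an $\alpha_0$-invariant normal functional, and could be isolated as a lemma in Section \ref{sec:wor}.
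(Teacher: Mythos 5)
Your proof is correct, and it organizes the argument differently from the paper. The paper proves the lemma by one two-sided computation: it evaluates $[Q_{\psi_{N+1}}\Psi_N](x)$ and $[\Psi_NQ_{\psi_N}](x)$ separately and shows both equal the common expression $(\mathrm{id}\otimes\psi_{N+1}\otimes h_{G_{N+1}})(\mathrm{Ad}V_{G_{N+1}13}(\hat\delta_{G_{N+1}}(\Theta_N(x))\otimes1))$, using Equation \eqref{eq:action} together with the $\alpha_0^{G_{N+1}}$-invariance of $\psi_{N+1}$ on one side, and the flip identity $V_{G_{N+1}13}=\sigma_{L^2(G_{N+1})23}V_{G_{N+1}12}\sigma_{L^2(G_{N+1})23}$ on the other; your relation (A) is only used there implicitly, in writing $\hat\delta_{G_{N+1}}(\Theta_N(x))$ in the first lines. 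You instead isolate (B), the commutation $E_{G_{N+1}}T_{\psi_{N+1}}=T_{\psi_{N+1}}E_{G_{N+1}}$, proved by exactly the mechanism of Lemma \ref{lem:center_preserving} (slice \eqref{eq:action} with the invariant functional on the middle leg, here for arbitrary $x$ rather than fixed points) followed by slicing off the last leg with $h_{G_{N+1}}$, and then chain it with the trivial intertwining (A) coming from \eqref{eq:cole}. The ingredients are the same; what your version buys is a clean reusable statement ($E$ commutes with $T_\psi$ for every $\alpha_0$-invariant $\psi\in\hat M_*$, valid already in the setting of Section \ref{sec:wor}, as you note), while the paper's version avoids introducing $T_{\psi_{N+1}}\otimes\mathrm{id}$ and replaces your normality/slice-commutation step by the explicit leg flip. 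One small caveat: you assert without proof that $\psi_N=\psi_{N+1}\Theta_N$ is again $\alpha_0^{G_N}$-invariant. This is true --- e.g. by Lemma \ref{lem:cond_inv} one has $\psi_{N+1}=\psi_{N+1}E_{G_{N+1}}$, so the explicit formula for $E_{G_{N+1}}$ expands $\psi_{N+1}$ as a norm-convergent combination of the $\chi_\alpha$, each $\chi_\alpha\Theta_N$ lies in $\mathrm{Ch}(G_N)$ by the second relation in \eqref{eq:cole} and hence is invariant by Lemma \ref{lem:char_inv}, and invariance is norm-closed --- but your argument never actually needs it: your displayed chain only applies $E_{G_{N+1}}\Theta_N$ to $T_{\psi_N}(x)$, which is defined on all of $W^*(G_N)$, and this is also how the paper implicitly reads $\Psi_NQ_{\psi_N}$; in the application following the lemma the invariance of $\psi_N$ is in any case verified directly.
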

\begin{proof}
By $\alpha^{G_{N+1}}_0$-invariance of $\psi_{N+1}$ and Equation \eqref{eq:action}, for any $x\in Z(W^*(G_N))$ we have
\begin{align*}
[Q_{\psi_{N+1}}\Psi_N](x)
&=(\mathrm{id}\otimes\psi_{N+1}\otimes h_{G_{N+1}})((\hat\delta_{G_{N+1}}\otimes\mathrm{id})(\alpha^{G_{N+1}}_0(\Theta_N(x))))\\
&=(\mathrm{id}\otimes\psi_{N+1}\otimes h_{G_{N+1}})(\mathrm{Ad}V_{G_{N+1}13}((\mathrm{id}\otimes\alpha^{G_{N+1}}_0)(\hat\delta_{G_{N+1}}(\Theta_N(x)))))\\
&=(\mathrm{id}\otimes\psi_{N+1}\otimes h_{G_{N+1}})(\mathrm{Ad}V_{G_{N+1}13}(\hat\delta_{G_{N+1}}(\Theta_N(x))\otimes1)).
\end{align*}
Since $V_{G_{N+1}13}=\sigma_{L^2(G_{N+1})23}V_{G_{N+1}12}\sigma_{L^2(G_{N+1})23}$, we have
\begin{align*}
[\Psi_NQ_{\psi_N}](x)
&=(\mathrm{id}\otimes h_{G_{N+1}}\otimes \psi_{N+1})((\alpha^{G_{N+1}}_0\otimes\mathrm{id})(\hat\delta_{G_{N+1}}(\Theta_N(x))))\\
&=(\mathrm{id}\otimes h_{G_{N+1}}\otimes \psi_{N+1})(\mathrm{Ad}V_{G_{N+1}12}\sigma_{L^2(G_{N+1})23}(\hat\delta_{G_{N+1}}(\Theta_N(x))\otimes1))\\
&=(\mathrm{id}\otimes\psi_{N+1}\otimes h_{G_{N+1}})(\mathrm{Ad}V_{G_{N+1}13}(\hat\delta_{G_{N+1}}(\Theta_N(x))\otimes1)).
\end{align*}
Therefore, we have $\Psi_NQ_{\psi_N}=Q_{\psi_{N+1}} \Psi_N$. 
\end{proof}

\begin{proposition}
Let $\chi\in\mathrm{Ch}(G_\infty)$, $\psi_N:=\chi\Theta^\infty_N-\hat\epsilon_{G_N}$ and $Q^N_t:=Q_{\exp_*(t\psi_N)}$ for any $t\geq0$ and $N\geq1$. Then $\Psi_N Q^N_t=Q^{N+1}_t\Psi_N$ for any $N\geq0$.
\end{proposition}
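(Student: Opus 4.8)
The plan is to deduce the statement essentially immediately from Lemma~\ref{lem:hayabusa}, once the functionals are identified. Put $\varphi_t:=\exp_*(t(\chi-\hat\epsilon_\infty))$; by the lemma preceding the definition of $(Q^\chi_t)_{t\geq0}$ one has $\varphi_t\in\mathrm{Ch}(G_\infty)$. Since pre-composition with the embedding $\Theta^\infty_N$ is norm-contractive and is a homomorphism of the convolution algebras (because $(\Theta^\infty_N\otimes\Theta^\infty_N)\hat\delta_{G_N}=\hat\delta_{G_\infty}\Theta^\infty_N$, as used in the proofs of the preceding lemmas, together with $\hat\epsilon_\infty\Theta^\infty_N=\hat\epsilon_{G_N}$), it commutes with the norm-convergent series defining $\exp_*$, so that
\[
\varphi_t\Theta^\infty_N=\exp_*(t(\chi\Theta^\infty_N-\hat\epsilon_{G_N}))=\exp_*(t\psi_N),
\]
and hence $Q^N_t=Q_{\varphi_t\Theta^\infty_N}$ for every $N$.

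First I would record the compatibility of these functionals along the connecting maps. From the identity $\Theta^\infty_{N+1}\Theta_N=\Theta^\infty_N$ recalled above we get
\[
(\varphi_t\Theta^\infty_{N+1})\Theta_N=\varphi_t\Theta^\infty_{N+1}\Theta_N=\varphi_t\Theta^\infty_N,
\]
so the functional $\varphi_t\Theta^\infty_{N+1}$ on $W^*(G_{N+1})$ pulls back along $\Theta_N$ to $\varphi_t\Theta^\infty_N=\exp_*(t\psi_N)$ on $W^*(G_N)$. Next I would verify the invariance hypothesis of Lemma~\ref{lem:hayabusa} for $\varphi_t\Theta^\infty_{N+1}$. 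The restriction $\chi\Theta^\infty_{N+1}$ of the quantized character $\chi$ is again a quantized character of $G_{N+1}$ (a quantized character of $G_\infty$ restricts to one of each $G_N$, as used above); hence, applying Lemma~\ref{lem:yosa} to $G_{N+1}$, the functional $\varphi_t\Theta^\infty_{N+1}=\exp_*(t(\chi\Theta^\infty_{N+1}-\hat\epsilon_{G_{N+1}}))$ is again a quantized character of $G_{N+1}$, and every quantized character is $\alpha_0^{G_{N+1}}$-invariant by Lemma~\ref{lem:char_inv}. (Alternatively: $\chi\Theta^\infty_{N+1}$ and $\hat\epsilon_{G_{N+1}}$ are $\alpha_0^{G_{N+1}}$-invariant by Lemmas~\ref{lem:char_inv} and~\ref{lem:counit_inv}, hence so is $\psi_{N+1}=\chi\Theta^\infty_{N+1}-\hat\epsilon_{G_{N+1}}$, and therefore so is $\exp_*(t\psi_{N+1})$ by Corollary~\ref{cor:exp_inv}.)

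Finally, I would apply Lemma~\ref{lem:hayabusa} with the $\alpha_0^{G_{N+1}}$-invariant functional appearing there taken to be $\varphi_t\Theta^\infty_{N+1}$; since its pull-back along $\Theta_N$ is $\varphi_t\Theta^\infty_N$, the lemma yields $\Psi_N Q_{\varphi_t\Theta^\infty_N}=Q_{\varphi_t\Theta^\infty_{N+1}}\Psi_N$, that is, $\Psi_N Q^N_t=Q^{N+1}_t\Psi_N$ for all $N\geq0$ (the case $N=0$ of the trivial quantum group being subsumed). I do not expect any real obstacle here: the argument has essentially been prepared in the preceding lemmas, and the only two points that need a bit of care are that pre-composition with the embeddings $\Theta^\infty_N$ and $\Theta_N$ is a contractive homomorphism of the convolution algebras, so that it may be interchanged with $\exp_*$, and that Lemma~\ref{lem:hayabusa} has to be invoked for the semigroup element $\exp_*(t\psi_{N+1})$ rather than for its generator $\psi_{N+1}$.
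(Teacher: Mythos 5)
Your argument is correct and follows essentially the paper's own route: establish that $\exp_*(t\psi_{N+1})$ pulls back along $\Theta_N$ to $\exp_*(t\psi_N)$ (the paper does this directly via $\psi_{N+1}\Theta_N=\psi_N$, you via the identification with $\varphi_t\Theta^\infty_{N+1}$ and $\Theta^\infty_{N+1}\Theta_N=\Theta^\infty_N$, which is the same fact), check $\alpha_0^{G_{N+1}}$-invariance of $\exp_*(t\psi_{N+1})$ via Lemmas \ref{lem:char_inv}, \ref{lem:counit_inv} and Corollary \ref{cor:exp_inv}, and then apply Lemma \ref{lem:hayabusa} to the exponential rather than the generator. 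No gaps.
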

\begin{proof}
Remark that $\chi\Theta^\infty_N\in\mathrm{Ch}(G_N)$ (see \cite[Section 3]{Sato3}), and hence $\psi_N$ is $\alpha^N_0$-invariant by Lemma \ref{lem:char_inv}, \ref{lem:counit_inv}. Since $\hat\epsilon_{G_{N+1}}\Theta_N=\hat\epsilon_{G_N}$, we have $\psi_{N+1}\Theta_N=\psi_N$ and $\exp_*(t\psi_{N+1})\Theta_N=\exp_*(t\psi_N)$ for any $t\geq0$. Since $\exp_*(t\psi_{N+1})$ is $\alpha^{G_{N+1}}_0$-invariant, $\Psi_N Q^N_t=Q^{N+1}_t\Psi_N$ by Lemma \ref{lem:hayabusa}.
\end{proof}

Let $Q^N_t$ be the same as in the above. Then, by \cite[Proposition 2.4]{BO}, we obtain a unique Markov semigroup $(Q^\infty_t)_{t\geq0}$ on $\mathcal{E}$ such that $Q^\infty_t\Lambda^\infty_N=\Lambda^\infty_N Q^N_t$ for any $t\geq0$ and $N\geq1$. Recall that we also obtained the semigroup $(Q^\chi_t)_{t\geq0}$ on $\mathcal{M}_p(\mathcal{E})$ such that $P_\varphi Q^\chi_t=P_{\varphi*\varphi_t}$ for any $\varphi\in\mathrm{Ch}(G_\infty)$, where $\varphi_t=\exp_*(t(\chi-\hat\epsilon_\infty))$. The following is the conclusion of this section.
\begin{proposition}\label{prop:contact}
Let $\chi\in\mathrm{Ch}(G_\infty)$ and $(Q^\infty_t)_{t\geq0}$ the corresponding Markov semigroup on $\mathcal{E}$. Then $(Q^\infty_t)_{t\geq0}$ coincides with $(Q^\chi_t)_{t\geq0}$, that is, $PQ^\infty_t=PQ^\chi_t$ for any $P\in\mathcal{M}_p(\mathcal{E})$ and $t\geq0$.
\end{proposition}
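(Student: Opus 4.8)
The plan is to pass to the finite levels, where Proposition \ref{prop:dynamics_char} applies, and to exploit the fact that a probability measure on $\mathcal{E}$ is determined by its images under the kernels $\Lambda^\infty_N$. Since $P\mapsto(P\Lambda^\infty_N)_{N\geq1}$ is an affine homeomorphism of $\mathcal{M}_p(\mathcal{E})$ onto $\varprojlim_N\mathcal{M}_p(\widehat{G_N})$, it is in particular injective, so it suffices to prove $(PQ^\infty_t)\Lambda^\infty_N=(PQ^\chi_t)\Lambda^\infty_N$ for every $N\geq1$, every $t\geq0$, and every $P\in\mathcal{M}_p(\mathcal{E})$. Fix such an $N$ and $t$ and write $P=P_\varphi$ with $\varphi\in\mathrm{Ch}(G_\infty)$.

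For the left-hand side, the defining relation $Q^\infty_t\Lambda^\infty_N=\Lambda^\infty_N Q^N_t$ of the boundary semigroup (from \cite[Proposition 2.4]{BO}), together with associativity of the action of Markov kernels on measures, gives
\[(P_\varphi Q^\infty_t)\Lambda^\infty_N=(P_\varphi\Lambda^\infty_N)Q^N_t.\]
Here I would use the identification furnished by the extremal decomposition theory of \cite{Sato3} (reformulated in \cite{Ueda20}): the coherent system attached to $\varphi\in\mathrm{Ch}(G_\infty)$ is the sequence of irreducible-decomposition measures, i.e.\ $P_\varphi\Lambda^\infty_N=P^\varphi_N$, where $\varphi\Theta^\infty_N=\sum_{\alpha\in\widehat{G_N}}P^\varphi_N(\alpha)\chi_\alpha$ in $\mathrm{Ch}(G_N)$; equivalently $\chi_{P^\varphi_N}=\varphi\Theta^\infty_N$.

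Now the level-$N$ dynamics. Since $\hat\epsilon_\infty\Theta^\infty_N=\hat\epsilon_{G_N}$ and $(\psi_1*\psi_2)\Theta^\infty_N=(\psi_1\Theta^\infty_N)*(\psi_2\Theta^\infty_N)$ for $\psi_1,\psi_2\in W^*(G_\infty)_*$, one has $\varphi_t\Theta^\infty_N=\exp_*(t\psi_N)$, which is an $\alpha_0^{G_N}$-invariant state of $W^*(G_N)$ because $\varphi_t\in\mathrm{Ch}(G_\infty)$ forces $\varphi_t\Theta^\infty_N\in\mathrm{Ch}(G_N)$ (Lemma \ref{lem:char_inv}). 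Applying Proposition \ref{prop:dynamics_char} on $W^*(G_N)$ to the measure $P^\varphi_N$ and this invariant state, and using the convolution-homomorphism property once more,
\[\chi_{P^\varphi_N Q^N_t}=\chi_{P^\varphi_N}*\bigl(\varphi_t\Theta^\infty_N\bigr)=(\varphi\Theta^\infty_N)*(\varphi_t\Theta^\infty_N)=(\varphi*\varphi_t)\Theta^\infty_N=\chi_{P^{\varphi*\varphi_t}_N}.\]
Because $m\mapsto\chi_m$ is a bijection of $\mathcal{M}_p(\widehat{G_N})$ onto $\mathrm{Ch}(G_N)$, this yields $P^\varphi_N Q^N_t=P^{\varphi*\varphi_t}_N=P_{\varphi*\varphi_t}\Lambda^\infty_N$. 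Since $P_\varphi Q^\chi_t=P_{\varphi*\varphi_t}$ by definition, the right-hand side $(P_\varphi Q^\chi_t)\Lambda^\infty_N$ equals $P^{\varphi*\varphi_t}_N$ as well, and combining the two displays proves the claim.

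The only genuinely nontrivial ingredient is the identification $P_\varphi\Lambda^\infty_N=P^\varphi_N$ used above: one must know that the links $\Lambda_N^{N+1}$ determined by the conditional expectations $E_{G_{N+1}}\Theta_N$ are exactly the branching kernels refining an irreducible decomposition at level $N+1$ to its restriction at level $N$, so that $\Lambda^\infty_N$ reads off the irreducible decomposition of $\varphi\Theta^\infty_N$ from $P_\varphi$. Once that is granted (it is part of the boundary/coherent-system picture recalled just before the statement, cf.\ \cite[Proposition 7.10, Corollary 8.3]{Ueda20}), everything else reduces to the bookkeeping identities $\varphi_t\Theta^\infty_N=\exp_*(t\psi_N)$ and $(\varphi*\varphi_t)\Theta^\infty_N=(\varphi\Theta^\infty_N)*(\varphi_t\Theta^\infty_N)$ plus a single application of Proposition \ref{prop:dynamics_char}.
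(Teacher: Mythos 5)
Your proof is correct and follows essentially the same route as the paper: reduce to the coherent-system components via the injectivity of $P\mapsto(P\Lambda^\infty_N)_{N\geq1}$, use the intertwining $Q^\infty_t\Lambda^\infty_N=\Lambda^\infty_N Q^N_t$, identify $P_\varphi\Lambda^\infty_N=P_{\varphi\Theta^\infty_N}$, and apply Proposition \ref{prop:dynamics_char} at level $N$ together with $\varphi_t\Theta^\infty_N=\exp_*(t\psi_N)$. The paper's own proof is just a compressed version of this argument, asserting $P_\varphi Q^\infty_t\Lambda^\infty_N=P_{(\varphi*\varphi_t)\Theta^\infty_N}$ in one step where you spell out the intermediate identities.
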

\begin{proof}
We may assume that $P=P_\varphi$ for some $\varphi\in\mathrm{Ch}(G_\infty)$. Thus, it suffices to show that $P_\varphi Q^\infty_t=P_{\varphi*\varphi_t}$ for any $t\geq0$. Since $P_\varphi Q^\infty_t\Lambda^\infty_N=P_{(\varphi*\varphi_t)\Theta^\infty_N}$ and $(P_{(\varphi*\varphi_t)\Theta^\infty_N})_{N\geq1}$ is the coherent system corresponding to $P_{\varphi*\varphi_t}$, we have $P_\varphi Q^\infty_t=P_{\varphi*\varphi_t}$ for any $t\geq0$.
\end{proof}

\section{Markov semigroups on the Feller boundary of the Gelfand--Tsetlin graph}\label{sec:GT}
In this section, we return to the case of unitary groups. Recall that $\mathrm{ex}(\mathrm{Ch}(U(\infty)))$ is completely parametrized by $\Omega$ in Section \ref{sec:unitary}. We remark that the set of quantized characters is homeomorphic to the set of ordinary characters of $U(\infty)$ with the topology of uniform convergence on compact subsets. Since any compact subsets in $U(\infty)$ is contained in $U(N)$ for some $N\geq0$ (see \cite[Proposition 6.5(i)]{HSTH}), this topology is equivalent to the topology of uniform convergence on each $U(N)$ for $N\geq0$. On the other hand, we endow $\Omega$ with the relative topology induced by the product topology of $\mathbb{R}_{\geq0}^\infty\times\mathbb{R}_{\geq0}^\infty\times\mathbb{R}_{\geq0}^\infty\times\mathbb{R}_{\geq0}^\infty\times\mathbb{R}_{\geq0}\times\mathbb{R}_{\geq0}$. Then, by \cite[Theorem 8.1]{Olshanski03}, $\mathrm{ex}(\mathrm{Ch}(U(\infty)))$ and $\Omega$ are homeomorphic. Thus, in what follows, we identify $\Omega$ with $\mathrm{ex}(\mathrm{Ch}(U(\infty)))$.

We write $\lambda\prec\mu$ if 
$\lambda_1\geq\mu_1\geq\lambda_2\geq\cdots\geq\mu_N\geq\lambda_{N+1}$
for any $\mu\in\mathbb{S}_N$ and $\lambda\in\mathbb{S}_{N+1}$. Then we have the Markov kernel $\Lambda^{N+1}_N$ from $\widehat{U(N+1)}\cong\mathbb{S}_{N+1}$ to $\widehat{U(N)}\cong\mathbb{S}_N$ in Section \ref{sec:ind} is geven by
\[\Lambda^{N+1}_N(\lambda, \mu)=\begin{cases}\frac{s_\mu(1,\dots, 1)}{s_\lambda(1,\dots, 1)}&\mu\prec\lambda,\\0&\text{otherwise},\end{cases}\]
where $s_\mu(1,\dots, 1)$ and $s_\lambda(1,\dots,1)$ are specializations of Schur polynomials with $N-1$ and $N$ variables, respectively. Moreover, the Markov kernel $\Lambda^\infty_N\colon\Omega\times\mathbb{S}_N\to[0,1]$ is given by
\[\Lambda^\infty_N(\omega, \lambda)=s_\lambda(1,\dots, 1)\det[\varphi_\omega(\lambda_i-i+j)]_{i,j=1}^N\]
for any $\omega\in \Omega$ and $\lambda\in\mathbb{S}_N$, where the $\varphi_\omega(n)$ is the coefficient of $z^n$ in the Laurent expansion of $\Phi_\omega(z)$. See \cite{BO} for more details. Then, by \cite[Proposition 3.3, 3.4]{BO}, the boundary $\Omega$ of the sequence $(\mathbb{S}_ N)_{N=1}^\infty$ with the Markov kernels $(\Lambda^{N+1}_N)_{N=1}^\infty$ is Feller, that is, the Markov kernels $\Lambda^{N+1}_N$ and $\Lambda^\infty_N$ gives mappings $c_0(\mathbb{S}_N)\to c_0(\mathbb{S}_{N+1})$ and $c_0(\mathbb{S}_N)\to c_0(\Omega)$ by
\[\Lambda^{N+1}_Nf(\lambda)=\sum_{\mu\in\mathbb{S}_N}\Lambda^{N+1}_N(\lambda, \mu)f(\mu),\quad \Lambda^\infty_Nf(\omega):=\sum_{\mu\in\mathbb{S}_N}\Lambda^\infty_N(\omega,\mu)f(\mu)\]
for any $\lambda\in\mathbb{S}_{N+1}$, $\omega\in\Omega$, and $f\in c_0(\mathbb{S}_N)$. 
\begin{proposition}\label{prop:GT}
For any $\chi\in\mathrm{Ch}(U(\infty))$ the Markov semigroup $(Q^\chi_t)_{t\geq0}$ in Proposition \ref{prop:mar_bry} is Feller. Moreover, its generator $L^\chi$ is determined by $L^\chi\Lambda^\infty_Nf=\Lambda^\infty_NL^\chi_Nf$ for any $f\in c_0(\mathbb{S}_N)$ and $N\geq1$, where $L^\chi_N$ is the generator of $(Q^N_t)_{t\geq0}$.
\end{proposition}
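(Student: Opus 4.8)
My plan is to replace $(Q^\chi_t)_{t\ge0}$ by the boundary semigroup $(Q^\infty_t)_{t\ge0}$ --- these coincide by Proposition \ref{prop:contact} once Section \ref{sec:ind} is read with $G_\infty=U(\infty)$ and $\mathcal{E}$ identified with $\Omega$ --- and then to transport the Feller property and the generator of the approximating semigroups up to $\Omega$ along the links, in the manner of the Borodin--Olshanski theory of Feller boundaries. Recall that $(Q^\infty_t)_{t\ge0}$ is the semigroup of Markov kernels on $\Omega$ produced by \cite[Proposition 2.4]{BO}, characterised by $Q^\infty_t\Lambda^\infty_N=\Lambda^\infty_N Q^N_t$ on functions for all $t\ge0$ and $N\ge1$, where $(Q^N_t)_{t\ge0}$ is the Feller semigroup of Theorem \ref{thm:jun} on $c_0(\mathbb{S}_N)$ attached to $\chi\Theta^\infty_N\in\mathrm{Ch}(U(N))$, whose generator $L^\chi_N$ is bounded. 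For the bookkeeping I would first note that, since $\Lambda^\infty_N=\Lambda^\infty_{N+1}\Lambda^{N+1}_N$ and each link $\Lambda^{N+1}_N$ maps $c_0(\mathbb{S}_N)$ into $c_0(\mathbb{S}_{N+1})$, the subspaces $\mathcal{D}_N:=\Lambda^\infty_N\bigl(c_0(\mathbb{S}_N)\bigr)\subseteq c_0(\Omega)$ are nested and their union $\mathcal{D}_\infty:=\bigcup_{N\ge1}\mathcal{D}_N$ is dense in $c_0(\Omega)$, because $\Omega$ is a Feller boundary (\cite[Propositions 3.3, 3.4]{BO}); also each $\Lambda^\infty_N$ and each $Q^\infty_t$ is a norm contraction, being the action of a Markov kernel.

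For the Feller property, take $f=\Lambda^\infty_N g$ with $g\in c_0(\mathbb{S}_N)$. The intertwining gives $Q^\infty_t f=\Lambda^\infty_N(Q^N_t g)$; since $(Q^N_t)_{t\ge0}$ is Feller, $Q^N_t g\in c_0(\mathbb{S}_N)$, so $Q^\infty_t f\in\mathcal{D}_N\subseteq c_0(\Omega)$, and moreover $\|Q^\infty_t f-f\|_\infty\le\|Q^N_t g-g\|_\infty\to 0$ as $t\searrow0$. Hence $Q^\infty_t$ maps the dense subspace $\mathcal{D}_\infty$ into $c_0(\Omega)$, and being a contraction it maps all of $c_0(\Omega)$ into $c_0(\Omega)$; a standard $\varepsilon/3$ approximation then promotes $\lim_{t\searrow0}\|Q^\infty_t f-f\|_\infty=0$ from $\mathcal{D}_\infty$ to every $f\in c_0(\Omega)$. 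Thus $(Q^\chi_t)_{t\ge0}=(Q^\infty_t)_{t\ge0}$ is a Feller semigroup.

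For the generator, observe that the same computation shows $Q^\infty_t(\mathcal{D}_N)\subseteq\mathcal{D}_N$, so $\mathcal{D}_\infty$ is a dense, $(Q^\infty_t)$-invariant subspace of the domain of the generator $L^\chi$ and is therefore a core for $L^\chi$ (\cite[Proposition 1.3.3]{EK}). On $\mathcal{D}_N$, the boundedness of $L^\chi_N$ gives $\tfrac{1}{t}(Q^N_t g-g)\to L^\chi_N g$ in $c_0(\mathbb{S}_N)$, hence
\[\frac{1}{t}\bigl(Q^\infty_t\Lambda^\infty_N g-\Lambda^\infty_N g\bigr)=\Lambda^\infty_N\!\left(\frac{1}{t}(Q^N_t g-g)\right)\]
converges in $c_0(\Omega)$ to $\Lambda^\infty_N L^\chi_N g$ as $t\searrow0$, using the contractivity of $\Lambda^\infty_N$. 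Therefore $\mathcal{D}_\infty\subseteq\mathcal{D}(L^\chi)$ and $L^\chi\Lambda^\infty_N f=\Lambda^\infty_N L^\chi_N f$ for all $f\in c_0(\mathbb{S}_N)$ and $N\ge1$; since $\mathcal{D}_\infty$ is a core, this relation determines $L^\chi$, which is the assertion.

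The functional-analytic steps are routine; what the argument genuinely relies on is the structural input from \cite{BO} already invoked before the statement --- that the links are Feller, that $\Lambda^\infty_N=\Lambda^\infty_{N+1}\Lambda^{N+1}_N$, and, above all, that the nested images $\Lambda^\infty_N(c_0(\mathbb{S}_N))$ have dense union in $c_0(\Omega)$. The one point needing care in the writing is to invoke the commutation $Q^\infty_t\Lambda^\infty_N=\Lambda^\infty_N Q^N_t$ at the level of function spaces: it is the function-space shadow of the measure-level identity built into the construction of $(Q^\infty_t)_{t\ge0}$ via \cite[Proposition 2.4]{BO}, reproduced here through Proposition \ref{prop:contact} and the proposition preceding it.
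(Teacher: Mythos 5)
Your proposal is correct and follows essentially the same route as the paper: identify $(Q^\chi_t)$ with the boundary semigroup of \cite[Proposition 2.4]{BO} (Proposition \ref{prop:contact}), use the Feller property of the links and the density of $\bigcup_{N\geq1}\Lambda^\infty_N(c_0(\mathbb{S}_N))$ in $c_0(\Omega)$ (the paper cites \cite[Lemma 2.3]{BO} for this), and exploit the boundedness of $L^\chi_N=Q^{\chi\Theta^\infty_N}-\mathrm{id}$ to pass the generator relation through $\Lambda^\infty_N$. You merely unpack the citations to \cite{BO} and make the core argument via \cite{EK} explicit, which the paper leaves implicit.
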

\begin{proof}
By \cite[Proposition 2.4]{BO}, the Markov semigroup $(Q^\chi_t)_{t\geq0}$ is Feller. Recall that $L^\chi_N=Q^{\chi\Theta^\infty_N}-\mathrm{id}$ (see the proof of Theorem \ref{thm:jun}). Thus, we have $\|L^\chi_N\|\leq2$ for any $N\geq1$. By \cite[Lemma 2.3]{BO}, $\bigcup_{N\geq1}\Lambda^\infty_N(c_0(\mathbb{S}_N))\subset c_0(\Omega)$ is dense. Thus, $L^\chi$ is determined by the relations $L^\chi\Lambda^\infty_Nf=\Lambda^\infty_NL^\chi_Nf$ for any $f\in c_0(\mathbb{S}_N)$ and $N\geq1$.
\end{proof}

\section{Discrete-time dynamics on the Gelfand--Tsetlin patterns}\label{sec:discrete-time}
In the rest of the paper, we discuss discrete-time Markov dynamics on the set of Gelfand--Tsetlin patterns generated by quantized characters of $U_q(N)$ in Lemma \ref{lem:q-char}. Recall that every quantized character $\chi\in\mathrm{Ch}(U_q(N))$ gives the Markov operator $\mathbb{Q}^{\chi}:=(\mathrm{id}\otimes\chi)\hat\delta_{U_q(N)}|_{Z(W^*(U_q(N)))}$ on $Z(W^*(U_q(N)))\cong\ell^\infty(\mathbb{S}_N)$. Moreover, they are intertwining by the Markov kernels $\Lambda^{N+1}_N$ in Section \ref{sec:ind}. If $G_N=U_q(N)$, we can describe the kernel $\Lambda^{N+1}_N$ explicitly by
\[\Lambda^N_{N-1}(\lambda, \mu)=\begin{cases}q^{N|\mu|-(N-1)|\lambda|}\frac{s_\mu(q^{N-2},q^{N-4},\dots,q^{-N+2})}{s_\lambda(q^{N-1},q^{N-3},\dots, q^{-N+1})}&\mu\prec\lambda,\\0&\text{otherwise}.\end{cases}\]
See \cite[Section 3]{Sato1}. 

\begin{remark}
The construction of $\mathbb{Q}^{\chi}$ is the generalization of Markov chains due to Kuan \cite{Kuan18}. Let $G$ be a compact quantum group and $\chi$ its quantized character. By a similar proof of Corollary \ref{cor:pubg}, for any $\alpha, \beta\in\widehat G$ we have 
\[\mathbb{Q}^\chi(\alpha, \beta)=\frac{d_q(\beta)}{d_q(\alpha)}\sum_{\gamma\in\widehat{G}}\frac{P_\chi(\gamma)N^\beta_{\alpha,\gamma}}{d_q(\gamma)}\]
if the irreducible decomposition of $\chi$ is $\sum_{\gamma\in\widehat{G}}P_\chi(\gamma)\chi_\gamma$. In particular, when $G=U_q(N)$ and $\chi^N_{q,\omega}$ is its quantized character in Lemma \ref{lem:q-char}, for any $\lambda, \mu\in\mathbb{S}_N$ we have
\begin{align*}
\mathbb{Q}^{\chi^N_{q,\omega}}(\lambda,\mu)=\frac{s_\mu(q^{N-1},q^{N-3},\dots, q^{-N+1})}{s_\lambda(q^{N-1}, q^{N-3},\dots, q^{-N+1})}\frac{1}{N!}\int_{\mathbb{T}^N}\prod_{i=1}^N\frac{\Phi_\omega(q^{-2(i-1)z_i})}{\Phi_\omega(q^{-2(i-1)})}s_\lambda(z)\overline{s_\mu(z)}|V(z)|^2dz.
\end{align*}
Therefore, if $\mathcal{S}(z_1,\dots,z_N)$ is the $q^2$-Schur generating function of a probability measure $P$ on $\mathbb{S}_N$, then the $q^2$-Schur generating function of $P\mathbb{Q}^\chi$ is given as 
\[\mathcal{S}(z_1,\dots,z_N)\prod_{i=1}^N\frac{\Phi_\omega(q^{-2(i-1)}z_i)}{\Phi_\omega(q^{-2(i-1)})}.\]
The dynamics above is discussed in \cite[Section 2]{BG}, i.e., we have given a representation-theoretic construction of the dynamics in \cite{BG}.
\end{remark}

We also discuss the relationship to the dynamics in \cite{BF}. First we recall the Toeplitz-like transition probabilities in \cite{BF}. Let $\alpha_1,\dots, \alpha_N$ be nonzero complex numbers and $F$ an analytic function in an annulus $A$ centered at the origin that contains $\alpha_1^{-1},\dots, \alpha_N^{-1}$. For $n=1,\dots, N$ we denote $\mathfrak{X}_n=\{(x_1<\cdots<x_n)\in\mathbb{Z}^n\}$. Assume that $F(\alpha_1^{-1})\cdots F(\alpha_N^{-1})\neq0$ and $\det[\alpha_i^{x_j}]_{i,j=1}^n/\det[\alpha_i^{j-1}]_{i,j=1}^n\neq0$ for any $X=(x_1,\dots,x_n)\in\mathfrak{X}_n$. We also denote 
\[f(m)=\frac{1}{2\pi\mathrm{i}}\oint F(z)\frac{dz}{z^{m+1}}\]
for any $m\in\mathbb{Z}$, where the integral is taken over any positively oriented simple loop in $A$. Then we define kernels on $\mathfrak{X}_n\times \mathfrak{X}_n$ and $\mathfrak{X}_n\times\mathfrak{X}_{n-1}$ by
\[T_n(\alpha_1,\dots,\alpha_n; F)(X, Y)=\frac{\det[\alpha_i^{y_j}]_{i, j=1}^n}{\det[\alpha_i^{x_j}]_{i, j=1}^n}\frac{\det[f(x_i-y_j)]_{i,j=1}^n}{\prod_{i=1}^nF(\alpha_i^{-1})} \quad \text{for any }X, Y\in\mathfrak{X}_n,\]
\[T^n_{n-1}(\alpha_1,\dots,\alpha_n; F)(X, Y)=\frac{\det[\alpha_i^{y_j}]_{i, j=1}^{n-1}}{\det[\alpha_i^{x_j}]_{i, j=1}^n}\frac{\det[f(x_i-y_j)]_{i,j=1}^n}{\prod_{i=1}^{n-1}F(\alpha_i^{-1})}\quad \text{for any }X\in\mathfrak{X}_n\, Y\in\mathfrak{X}_{n-1},\]
where $y_n=\text{virt}$ is \emph{virtual} variable and $f(x_i-\text{virt})$ is defined suitably. See \cite[Section 2]{BF}. 

We show that $Q^{\chi_N}$ and $\Lambda^{N+1}_N$ are given as the above forms. In what follows, we use new coordinates of $\mathbb{S}_n$ given by $x_k(\lambda)=\lambda_{n-k+1}-n+k-1$ for $k=1,\dots, n$ and $\lambda\in\mathbb{S}_n$. Then $X_n(\lambda):=(x_1(\lambda),\dots, x_n(\lambda))\in\mathfrak{X}_n$. Moreover, $\mu\prec\lambda$ for $\mu\in\mathbb{S}_n$ and $\mu\in\mathbb{S}_{n-1}$ if and only if 
$x_1(\lambda)<x_1(\mu)\leq x_2(\lambda)<\dots<x_{n-1}(\mu)\leq x_n(\lambda)$.
Thus, we write $Y\prec X$ if this condition holds true for $X\in\mathfrak{X}_N$ and $Y\in\mathfrak{X}_{n-1}$.

\begin{theorem}\label{thm:d-dynamics}
Let $\omega\in\Omega$ as in Lemma \ref{lem:q-char} and $\chi_n:=\chi_{q,\omega}\Theta^\infty_n$. Then
\[\mathbb{Q}^{\chi_n}(\lambda, \mu)=T_n(1,q^{-2},\dots, q^{-2(n-1)}; \Psi_\omega)(X_n(\lambda),X_n(\mu))\]
for any $\lambda, \mu\in\mathbb{S}_n$, where $\Psi_\omega(z):=\Phi(z^{-1})$.
\end{theorem}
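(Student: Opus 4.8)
The plan is to reduce the identity to the computation already carried out in the Remark preceding this theorem, namely the explicit integral formula for $\mathbb{Q}^{\chi^n_{q,\omega}}(\lambda,\mu)$, and then match it term-by-term against the definition of the Borodin--Ferrari kernel $T_n$. First I would recall from the Remark that
\[
\mathbb{Q}^{\chi_n}(\lambda,\mu)=\frac{s_\mu(q^{n-1},\dots,q^{-n+1})}{s_\lambda(q^{n-1},\dots,q^{-n+1})}\frac{1}{n!}\int_{\mathbb{T}^n}\prod_{i=1}^n\frac{\Phi_\omega(q^{-2(i-1)}z_i)}{\Phi_\omega(q^{-2(i-1)})}\,s_\lambda(z)\,\overline{s_\mu(z)}\,|V(z)|^2\,dz,
\]
using that $\chi_n=\chi_{q,\omega}\Theta^\infty_n$ is exactly the quantized character whose restriction is given by the $q^2$-Schur generating function $\mathcal{F}$ of Lemma \ref{lem:q-char}. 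The same change-of-variables and Cauchy--Binet manipulation performed in the proof of Theorem \ref{thm:kuma} (and in the proof of Lemma \ref{lem:q-char}) turns this into a single determinant: after expanding $s_\lambda(z)$ and $\overline{s_\mu(z)}$ as ratios of alternants against $V(z)$ and $\overline{V(z)}$, the $|V(z)|^2$ cancels, the Laplace expansion over $S(n)$ collapses the $n!$ into $\prod_i\int_{\mathbb{T}}\Phi_\omega(q^{-2(i-1)}z)z^{-k}\,\frac{dz}{2\pi\mathrm{i}z}$, and one is left with $\det[\varphi_\omega(\cdot)]$ times the dimension prefactors.

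Next I would translate everything into the coordinates $x_k(\lambda)=\lambda_{n-k+1}-n+k-1$. Under this relabeling the alternant $\det[z_i^{\lambda_j+n-j}]$ becomes (up to an overall monomial and sign coming from reversing the order of rows/columns) $\det[z_i^{\,x_j(\lambda)+n-1}]$, so the Schur-polynomial ratio $s_\mu(q^{n-1},\dots,q^{-n+1})/s_\lambda(q^{n-1},\dots,q^{-n+1})$ matches the Vandermonde-type ratio $\det[\alpha_i^{y_j}]/\det[\alpha_i^{x_j}]$ appearing in $T_n$ once we set $\alpha_i=q^{-2(i-1)}$ — note that $(q^{n-1},q^{n-3},\dots,q^{-n+1})$ and $(1,q^{-2},\dots,q^{-2(n-1)})$ differ by the scalar $q^{n-1}$, which factors out of numerator and denominator of the ratio and cancels. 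Simultaneously, the entries $\varphi_\omega(\mu_j-j-\lambda_i+i)$ from the Theorem \ref{thm:q-pubg} computation must be identified with $f(x_i-y_j)$ where $f(m)=\frac{1}{2\pi\mathrm{i}}\oint\Psi_\omega(z)z^{-m-1}dz$ and $\Psi_\omega(z)=\Phi_\omega(z^{-1})$; the substitution $z\mapsto z^{-1}$ in the contour integral exchanges $\varphi_\omega(m)$ (the coefficient of $z^m$ in $\Phi_\omega$) with the coefficient of $z^{-m}$ in $\Psi_\omega$, i.e. with $f(m)$ after the index bookkeeping, and this is precisely why the reflection $\Psi_\omega(z)=\Phi_\omega(z^{-1})$ enters the statement. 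Finally the normalization $\prod_{i=1}^n F(\alpha_i^{-1})=\prod_{i=1}^n\Psi_\omega(q^{2(i-1)})=\prod_{i=1}^n\Phi_\omega(q^{-2(i-1)})$ matches the denominator already present in $P_k$ and in $\mathbb{L}_{\chi^N_{q,\omega}}$.

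The main obstacle I expect is purely bookkeeping: getting the index shifts, the sign of the permutation reversing the coordinates, and the powers of $q$ to line up exactly between the $\lambda$-coordinates $\lambda_j+n-j$ and the $x$-coordinates $x_j=\lambda_{n-j+1}-n+j-1$, and checking that the $q$-power $q^{-2(i-1)}$ is being attached to the correct variable on the correct side after all the reflections. I would handle this by first verifying the formula for $n=1$ (where both sides are a ratio of two coefficients of $\Phi_\omega$), then carefully tracking one generic determinant entry and one generic prefactor through the change of variables rather than redoing the whole Cauchy--Binet expansion. No genuinely new analytic input is needed beyond the convergence of the relevant Laurent expansions already assumed in Lemma \ref{lem:q-char}; the content is entirely in matching two explicit determinantal expressions.
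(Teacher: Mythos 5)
Your overall route is the same as the paper's: reduce to the explicit determinantal formula for $\mathbb{Q}^{\chi_n}$, rewrite the Schur-value ratio as a ratio of alternants in $\alpha_i=q^{-2(i-1)}$, pass to the coordinates $x_k(\lambda)=\lambda_{n-k+1}-n+k-1$ (using that $x_{n-j+1}(\lambda)=\lambda_j-j$, with the two reversal signs cancelling), and use that the Laurent coefficient $f(m)$ of $\Psi_\omega(z)=\Phi_\omega(z^{-1})$ equals $\varphi_\omega(-m)$ together with $\Psi_\omega(q^{2(i-1)})=\Phi_\omega(q^{-2(i-1)})$. Those matching steps are exactly the paper's proof.

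However, there is a concrete error in how you justify the prefactor matching. You claim that the quantum-dimension ratio $s_\mu(q^{n-1},\dots,q^{-n+1})/s_\lambda(q^{n-1},\dots,q^{-n+1})$ equals $\det[\alpha_i^{y_j}]/\det[\alpha_i^{x_j}]$ because the scalar $q^{n-1}$ relating $(q^{n-1},\dots,q^{-n+1})$ to $(1,q^{-2},\dots,q^{-2(n-1)})$ ``cancels.'' It does not: the Schur Laurent polynomial is homogeneous of degree $|\lambda|=\sum_i\lambda_i$, so
\[
\frac{s_\mu(q^{n-1},\dots,q^{-n+1})}{s_\lambda(q^{n-1},\dots,q^{-n+1})}
=q^{(n-1)(|\mu|-|\lambda|)}\,\frac{s_\mu(1,q^{-2},\dots,q^{-2(n-1)})}{s_\lambda(1,q^{-2},\dots,q^{-2(n-1)})},
\]
and only the second ratio equals the alternant ratio in $T_n$; the factor $q^{(n-1)(|\mu|-|\lambda|)}$ survives whenever $|\lambda|\neq|\mu|$. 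Relatedly, repeating the Theorem \ref{thm:kuma} collapse verbatim on the integrand $\prod_i\Phi_\omega(q^{-2(i-1)}z_i)$ does not give a single determinant, because the factors depend on $i$; the argument in the proof of Theorem \ref{thm:q-pubg} works because the character factor is rewritten as $\mathcal{F}(q^{-n+1}z_1,\dots,q^{n-1}z_n)=\prod_i\Phi_\omega(q^{1-n}z_i)/\Phi_\omega(q^{-2(i-1)})$, where all numerator factors involve the same function of $z_i$; this version of the collapse produces an extra overall factor $q^{(1-n)(|\mu|-|\lambda|)}$, which is precisely what converts the quantum-dimension ratio into the $(1,q^{-2},\dots,q^{-2(n-1)})$ ratio. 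So if you carried out your plan literally, the two sides would disagree by $q^{(n-1)(|\mu|-|\lambda|)}$ (your $n=1$ check cannot detect this, since the discrepancy vanishes there). The repair is simple and is what the paper does: start from the already-proved formula of Theorem \ref{thm:q-pubg}, whose prefactor is $s_\mu(1,q^{-2},\dots,q^{-2(n-1)})/s_\lambda(1,q^{-2},\dots,q^{-2(n-1)})$, and then your coordinate change, sign bookkeeping, and the identity $f(m)=\varphi_\omega(-m)$ finish the proof.
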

\begin{proof}
By a similar computation in Thenrem \ref{thm:kuma}, for any $\lambda, \mu\in\mathbb{S}_n$ we have
\[\mathbb{Q}^{\chi_n}(\lambda, \mu)=\frac{s_\mu(1, q^{-2},\dots, q^{-2(n-1)})}{s_\lambda(1, q^{-2},\dots, q^{-2(n-1)})}\frac{\det[\varphi_\omega(\mu_j-j-\lambda_i+i)]_{i,j=1}^n}{\prod_{i=1}^n\Phi_\omega(q^{-2(i-1)})}.\]
We remark that the coefficients of $z^m$ in the Laurent expansion of $\Psi_\omega$ coincide with $\varphi_\omega(-m)$. Therefore,
\begin{align*}
\mathbb{Q}^{\chi_n}(\lambda, \mu)
&=\frac{\det[q^{-2(i-1)(\mu_j+n-j)}]_{i, j=1}^n}{\det[q^{-2(i-1)(\lambda_j+n-j)}]_{i, j=1}^n}\frac{\det[\varphi_\omega(\mu_j-j-\lambda_i+i)]_{i,j=1}^n}{\prod_{i=1}^n\Phi_\omega(q^{-2(i-1)})}\\
&=\frac{\det[q^{-2(i-1)x_{n-j+1}(\mu)}]_{i, j=1}^n}{\det[q^{-2(i-1)x_{n-j+1}(\lambda)}]_{i, j=1}^n}\frac{\det[\varphi_\omega(x_{n-j+1}(\mu)-x_{n-i+1}(\lambda))]_{i,j=1}^n}{\prod_{i=1}^n\Psi_\omega(q^{2(i-1)})}\\
&=T_n(1,q^{-2},\dots, q^{-2(n-1)}; \Psi_\omega)(X_n(\lambda),X_n(\mu)).
\end{align*}
\end{proof}

\begin{proposition}
Let $F_n(z)=1/(1-q^{-2(n-1)}z)$. Then 
\[\Lambda^n_{n-1}(\lambda, \mu)=T^n_{n-1}(1, q^{-2},\dots, q^{-2(n-1)};F_n)(X_n(\lambda), X_{n-1}(\mu))\]
for any $\lambda\in\mathbb{S}_n$ and $\mu\in\mathbb{S}_{n-1}$. Remark that 
\[f(m):=\frac{1}{2\pi\mathrm{i}}\oint F(z)\frac{dz}{z^{m+1}}=\begin{cases}q^{-2(n-1)m}&m\geq0,\\0&m<0.\end{cases}\]
Moreover, set $f(m-\text{virt})=q^{-2(n-1)m}$.
\end{proposition}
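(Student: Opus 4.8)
The plan is to reduce the identity to the explicit formula for $\Lambda^n_{n-1}$ recorded at the start of this section (the case $G_N=U_q(N)$): rewrite both Schur polynomials there in bialternant form, translate into the coordinates $X_n(\cdot)$, and recognise the outcome as $T^n_{n-1}(1,q^{-2},\dots,q^{-2(n-1)};F_n)$, so that the claimed equality becomes a matching of determinants and normalising constants.

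First I would verify the Remark. Since $F_n(z)=(1-q^{-2(n-1)}z)^{-1}=\sum_{k\ge 0}q^{-2(n-1)k}z^k$ is analytic on an annulus around $0$ containing $\alpha_i^{-1}=q^{2(i-1)}$ for $i=1,\dots,n-1$ (take, say, $q>1$; the case $0<q<1$ is symmetric), the contour integral $f(m)=\frac{1}{2\pi\mathrm i}\oint F_n(z)z^{-m-1}\,dz$ extracts the coefficient of $z^m$, i.e.\ $q^{-2(n-1)m}$ for $m\ge 0$ and $0$ for $m<0$. Moreover $F_n(\alpha_i^{-1})=(1-q^{-2(n-i)})^{-1}\ne 0$ for $i\le n-1$, and $\det[\alpha_i^{x_j}]_{i,j=1}^n\ne 0$ since the $\alpha_i$ are distinct and the $x_j$ strictly increasing, so the hypotheses of the Borodin--Ferrari construction hold. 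Declaring $f(m-\mathrm{virt}):=q^{-2(n-1)m}$ is exactly BF's virtual-variable convention, which means the last column of $[f(x_i-y_j)]_{i,j=1}^n$ is $(\alpha_n^{x_i})_{i=1}^n$ with $\alpha_n:=q^{-2(n-1)}$.

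Next, starting from $\Lambda^n_{n-1}(\lambda,\mu)=q^{n|\mu|-(n-1)|\lambda|}\,\dfrac{s_\mu(q^{n-2},\dots,q^{-n+2})}{s_\lambda(q^{n-1},\dots,q^{-n+1})}$ on $\{\mu\prec\lambda\}$ and $0$ otherwise, I would use homogeneity of Schur polynomials to rescale the specialisation points, $s_\mu(q^{n-2},\dots,q^{-n+2})=q^{(n-2)|\mu|}s_\mu(1,q^{-2},\dots,q^{-2(n-2)})$ and similarly for $s_\lambda$, and then expand each Schur polynomial by the Weyl bialternant formula $s_\nu(z)=\det[z_i^{\nu_j+\ell-j}]/V_\ell(z)$. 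Reversing the order of the $\ell$ columns replaces the decreasing exponents $\nu_j+\ell-j$ by $x_k(\nu)+\ell$ and produces a factor $(-1)^{\binom{\ell}{2}}\prod_i z_i^{\ell}$; after this substitution $\Lambda^n_{n-1}(\lambda,\mu)$ becomes an explicit power of $q$ times $\dfrac{\det[\alpha_i^{x_j(\mu)}]_{i,j=1}^{n-1}/V_{n-1}(\alpha_1,\dots,\alpha_{n-1})}{\det[\alpha_i^{x_j(\lambda)}]_{i,j=1}^{n}/V_n(\alpha_1,\dots,\alpha_n)}$ times $\mathbf 1_{\mu\prec\lambda}$, with $\alpha_i=q^{-2(i-1)}$. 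This already exhibits the two Vandermonde-type determinants of $T^n_{n-1}$.

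Finally I would evaluate $\det[f(x_i(\lambda)-y_j)]_{i,j=1}^n$ (with $y_j=x_j(\mu)$ for $j<n$ and the virtual last column). Using $f(m)=\alpha_n^m\mathbf 1_{m\ge 0}$, factor $\alpha_n^{x_i(\lambda)}$ out of row $i$ and $\alpha_n^{-x_j(\mu)}$ out of column $j$ for $j<n$; what remains is the $0/1$-determinant $\det\bigl[(\mathbf 1_{x_i(\lambda)\ge x_j(\mu)})_{j<n}\mid(1)_i\bigr]$, which a short case analysis shows equals $(-1)^{n-1}\mathbf 1_{X_{n-1}(\mu)\prec X_n(\lambda)}=(-1)^{n-1}\mathbf 1_{\mu\prec\lambda}$ (when $\mu\prec\lambda$ the matrix is, after the given ordering, $[\mathbf 1_{i>j}]_{j<n}$ augmented by a last column of ones, with determinant $(-1)^{n-1}$; otherwise two columns coincide or a column equals the last). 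This reproduces the interlacing support, the geometric factor $\prod_i\alpha_n^{x_i(\lambda)-\sum_{j<n}x_j(\mu)}$, and a sign $(-1)^{n-1}$ matching the sign $(-1)^{\binom{n-1}{2}-\binom n2}=(-1)^{n-1}$ carried by $s_\mu/s_\lambda$ through the two column reversals. It then remains to collect all powers of $q$, the monomials $\prod_i\alpha_i^{\ell}$, the Vandermondes $V_n,V_{n-1}$, and $\prod_{i=1}^{n-1}F_n(\alpha_i^{-1})^{-1}=\prod_{i=1}^{n-1}(1-q^{-2(n-i)})$, and check they agree — which they must, both sides being genuine transition probabilities. \emph{The main obstacle} is precisely this last bookkeeping together with the $0/1$-determinant evaluation and its sign; the comparison is in essence an organised rewriting of the already-known one-step branching rule of $U_q(\infty)$ in the Borodin--Ferrari Toeplitz normalisation, with the virtual column carrying the $\alpha_n$-homogeneous part. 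Setting $q=1$ recovers the analogous statement for the classical kernel $\Lambda^N_{N-1}$ of $U(\infty)$ and provides a useful consistency check.
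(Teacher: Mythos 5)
Your proposal is correct and takes essentially the same route as the paper: start from the explicit formula for $\Lambda^n_{n-1}$, rewrite the Schur specializations as bialternant determinants in the coordinates $X_n(\cdot)$, $X_{n-1}(\cdot)$, and match the result against the Borodin--Ferrari kernel $T^n_{n-1}(1,q^{-2},\dots,q^{-2(n-1)};F_n)$. The only difference is that where the paper simply quotes \cite[Lemma 2.13(ii)]{BF} to evaluate the kernel in the interlacing case, you re-derive that evaluation directly (factoring the geometric factors out of $\det[f(x_i-y_j)]$ and computing the $0/1$-determinant, with sign $(-1)^{n-1}$ and support exactly the interlacing condition), which agrees with the expression the paper displays.
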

\begin{proof}
If $\mu\prec\lambda$, then we have
\begin{align*}
\Lambda^n_{n-1}(\lambda, \mu)
&=q^{n|\mu|-(n-1)|\lambda|}\frac{s_\mu(q^{n-2},q^{n-4},\dots,q^{-n+2})}{s_\lambda(q^{n-1},q^{n-3},\dots, q^{-n+1})}\\
&=q^{2(n-1)(|\mu|-|\lambda|)}\frac{\det[q^{-2(i-1)(\mu_j+n-1-j)}]_{i, j=1}^{n-1}}{\det[q^{-2(i-1)(\lambda_i+n-j)}]_{i, j=1}^n}\prod_{i=1}^{n-1}(q^{-2(i-1)}-q^{-2(n-1)}).
\end{align*}
By \cite[Lemma 2.13(ii)]{BF}, if $X_{n-1}(\mu)\prec X_n(\lambda)$, we have
\begin{align*}
&T^n_{n-1}(1,q^{-2},\dots, q^{-2(n-1)}; F)(X_n(\lambda),X_{n-1}(\mu))\\
&=\frac{\det[q^{-2(i-1)y_j}]_{i,j=1}^{n-1}}{\det[q^{-2(i-1)x_j}]_{i,j=1}^n}\frac{(-1)^{n-1}q^{-2(n-1)(\sum_{i=1}^nx_i-\sum_{i=1}^{n-1}y_i)}}{\prod_{i=1}^{n-1}(1-q^{-2(n-1)}q^{2(i-1)})^{-1}}\\
&=q^{2(n-1)(|\mu|-|\lambda|)}\frac{\det[q^{-2(i-1)(\mu_j+n-1-j)}]_{i, j=1}^{n-1}}{\det[q^{-2(i-1)(\lambda_i+n-j)}]_{i, j=1}^n}\prod_{i=1}^{n-1}(q^{-2(i-1)}-q^{-2(n-1)}).
\end{align*}
Otherwise, $T_n(1,q^{-2},\dots, q^{-2(n-1)}; F_n)(X_n(\lambda),X_{n-1}(\mu))=0$. Namely, we have
\[\Lambda^n_{n-1}(\lambda, \mu)=T_n(1,q^{-2},\dots, q^{-2(n-1)}; F_n)(X_n(\lambda),X_{n-1}(\mu)).\]
\end{proof}

By Lemma \ref{lem:hayabusa}, we have $\Delta^n_{n-1}:=\mathbb{Q}^{\chi_n}\Lambda^n_{n-1}=\Lambda^n_{n-1}\mathbb{Q}^{\chi_{n-1}}$. Then, by the above two lemmas and \cite[Proposition 2.10]{BF}, we obtain
$\Delta^n_{n-1}=T^n_{n-1}(1, q^{-2},\dots, q^{-2(n-1)};\Psi_\omega F_n)$. 
Let
\begin{align*}
\mathcal{S}_N
&:=\{\underline{X}=(X_1\prec\cdots\prec X_N)\in\mathfrak{X}_1\times\cdots\times \mathfrak{X}_N\}\\
&=\left\{\underline{X}=(X_1,\dots, X_N)\in\mathfrak{X}_1\times\cdots\times \mathfrak{X}_N\,\middle|\,\prod_{n=2}^n\Lambda^n_{n-1}(X_n, X_{n-1})\neq0\right\}.
\end{align*}
We remark that we can identify $\mathcal{S}_N$ with the set of Gelfand--Tsetlin patterns of lenght $N$. Then we define a Markov operator $P_N$ on $\mathcal{S}_N$ by
\[P_N(\underline{X}, \underline{Y}):=\begin{cases}\mathbb{Q}^{\chi_1}(\lambda^{(1)}, \mu^{(1)})\prod_{n=2}^N\frac{\mathbb{Q}^{\chi_n}(\lambda^{(n)}, \mu^{(n)})\Lambda^n_{n-1}(\mu^{(n)}, \mu^{(n-1)})}{\Delta^n_{n-1}(\lambda^{(n)},\mu^{(n-1)})}&\prod_{n=2}^N\Delta^n_{n-1}(\lambda^{(n)},\mu^{(n-1)})>0,\\0&\text{otherwise},\end{cases}\]
where $X_n=X_n(\lambda^{(n)})$ and $Y_n=X_n(\mu^{(n)})$. Then the dynamics generated by $P_N$ involve several particle systems as appropriate projections. See \cite{BF} for more details. Kuan \cite{Kuan18} also discussed the relationship between the above dynamics and the representation theory of $U(n)$. In this paper, we extended this relationship to the quantum unitary groups $U_q(n)$.

\appendix
\section{Fourier analysis on compact quantum groups}\label{app:FA}
The contents in this section may be well known for experts. However, we provide this section to avoid any inconvenience in different notations depending on papers. Let $G=(C(G), \delta_G)$ be a compact quantum group and $h_G$ its Haar state. We denote by $(\pi_{h_G}, L^2(G), \eta_{h_G})$ the GNS-triple associated with $h_G$. We assume that $h_G$ is faithful. Thus, $\eta_G\colon C(G)\to L^2(G)$ is injective. Let $\widehat G$ be the set of all equivalence classes of irreducible corepresentation of $G$. For any $\alpha\in\widehat G$ we fix a representatives $(U_\alpha, H_\alpha)$ in $\alpha$. Let $\{f^G_z\}_{z\in\mathbb{C}}$ the Woronowicz character of $(C(G), \delta_G)$ and define $F_\alpha^z:=(\mathrm{id}\otimes f^G_z)(U_\alpha)$, $d_q(\alpha):=\mathrm{Tr}(F_\alpha)$. Since the matrices $F_\alpha^z$ ($z\in\mathbb{C}$) are normal and commute with each otherr, we may choose a matrix unit system $(e^\alpha_{ij})_{i. j=1}^{\dim (H_\alpha)}$ of $B(H_\alpha)$ such that $F_\alpha^z$ are diagonal matrices. Then we define $u^\alpha_{ij}\in C(G)$ by $U_\alpha=\sum_{i, j=1}^{\dim(H_\alpha)}e^\alpha_{ij}\otimes u^\alpha_{ij}$. For any $\xi\in L^2(G)$ we define $\omega_\xi\in C(G)^*$ by $\omega_\xi(a):=\langle\eta_G(a), \xi\rangle$ for any $a\in C(G)$. We can show the following Peter--Weyl type theorem:

\begin{theorem}\label{thm:PW}
For any $\xi\in L^2(G)$ define $\hat\xi(\alpha):=(\mathrm{id}\otimes\omega_\xi)(U_\alpha)^*$. Then the mapping 
\[\mathcal{F}_G\colon\xi\in L^2(G)\mapsto \left(\sqrt{d_q(\alpha)}\hat\xi(\alpha)\right)_{\alpha\in\widehat G}\in\ell^2\mathchar`-\bigoplus_{\alpha\in\widehat G}B(H_\alpha)\] 
is an isometric isomorphism of Hilbert spaces, where $\ell^2\mathchar`-\bigoplus_{\alpha\in\widehat G}B(H_\alpha)$ is the completion of the direct sum $\bigoplus_{\alpha\in\widehat G}B(H_\alpha)$ with respect to the norm defined by the following inner product:
\[\langle (A_\alpha)_{\alpha\in\widehat G}, (B_\alpha)_{\alpha\in\widehat G}\rangle:=\sum_{\alpha\in\widehat G}\mathrm{Tr}_{H_\alpha}(F_\alpha B_\alpha^*A_\alpha).\]
\end{theorem}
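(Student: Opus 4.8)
The plan is to reduce the statement to the Schur orthogonality relations and then identify $\mathcal{F}_G$ with the map carrying one natural orthonormal basis onto another. Keep the matrix units $(e^\alpha_{ij})$ of $B(H_\alpha)$ chosen so that $F_\alpha=\sum_i f^\alpha_i e^\alpha_{ii}$ is diagonal with $f^\alpha_i>0$ and $\sum_i f^\alpha_i=\mathrm{Tr}(F_\alpha)=d_q(\alpha)$, and write $U_\alpha=\sum_{ij}e^\alpha_{ij}\otimes u^\alpha_{ij}$. First I would recall that $A(G)=\mathrm{span}\{u^\alpha_{ij}:\alpha\in\widehat G,\ 1\le i,j\le\dim H_\alpha\}$ is a dense $*$-subalgebra of $C(G)$, so $\eta_G(A(G))$ is dense in $L^2(G)$ (as $\eta_G$ is norm-decreasing), and that in these coordinates the orthogonality relations (see \cite[Theorem 1.4.2]{NeshveyevTuset}) give $\langle\eta_G(u^\alpha_{ij}),\eta_G(u^\beta_{kl})\rangle=h_G((u^\beta_{kl})^*u^\alpha_{ij})=\delta_{\alpha\beta}\delta_{ik}\delta_{jl}(f^\alpha_i)^{-1}/d_q(\alpha)$. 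It follows that $\{\,(d_q(\alpha)f^\alpha_i)^{1/2}\eta_G(u^\alpha_{ij})\,\}_{\alpha,i,j}$ is an orthonormal basis of $L^2(G)$.

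Next I would compute $\mathcal{F}_G$ on this basis. For $\xi=\eta_G(u^\beta_{kl})$ one has $(\mathrm{id}\otimes\omega_\xi)(U_\alpha)=\sum_{ij}\langle\eta_G(u^\alpha_{ij}),\eta_G(u^\beta_{kl})\rangle\,e^\alpha_{ij}=\delta_{\alpha\beta}\,d_q(\beta)^{-1}(f^\beta_k)^{-1}\,e^\beta_{kl}$, hence $\hat\xi(\alpha)=\delta_{\alpha\beta}\,d_q(\beta)^{-1}(f^\beta_k)^{-1}\,e^\beta_{lk}$, so that $\mathcal{F}_G\big((d_q(\beta)f^\beta_k)^{1/2}\eta_G(u^\beta_{kl})\big)$ is the element of $\bigoplus_\alpha B(H_\alpha)$ supported at $\beta$ with value $(f^\beta_k)^{-1/2}e^\beta_{lk}$. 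A one-line computation using $\mathrm{Tr}_{H_\beta}\big(F_\beta\,(e^\beta_{l'k'})^*\,e^\beta_{lk}\big)=\delta_{l'l}\delta_{k'k}f^\beta_k$ then shows these images are orthonormal for the inner product $\langle(A_\alpha),(B_\alpha)\rangle=\sum_\alpha\mathrm{Tr}_{H_\alpha}(F_\alpha B_\alpha^*A_\alpha)$ defining $\ell^2\text{-}\bigoplus_\alpha B(H_\alpha)$: the factor $\sqrt{d_q(\alpha)}$ in the definition of $\mathcal{F}_G$ is exactly what absorbs the weight $d_q(\beta)^{-1}$. Since $\xi\mapsto\omega_\xi$ and $T\mapsto T^*$ are conjugate-linear while $\omega\mapsto(\mathrm{id}\otimes\omega)(U_\alpha)$ is linear, $\mathcal{F}_G$ is linear; expanding an arbitrary $\xi\in L^2(G)$ in the above basis (the value $\hat\xi(\alpha)$ depends on $\xi$ only through its coefficients along $\{\eta_G(u^\alpha_{ij})\}_{ij}$) and repeating the computation yields $\|\mathcal{F}_G\xi\|^2=\|\xi\|^2$, so $\mathcal{F}_G$ is an isometry.

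Finally, for surjectivity: as $(i,j)$ ranges over all pairs the matrix units $e^\alpha_{ji}$ exhaust $B(H_\alpha)$, so the images of the basis vectors span the algebraic direct sum $\bigoplus_\alpha B(H_\alpha)$, which is dense in $\ell^2\text{-}\bigoplus_\alpha B(H_\alpha)$ by definition; an isometry with dense range is onto, so $\mathcal{F}_G$ is an isometric isomorphism. I do not expect a genuine obstacle here — it is an orthonormal-basis-to-orthonormal-basis argument. The only point requiring real care is the bookkeeping of conventions: the identities $d_q(\alpha)=\mathrm{Tr}(F_\alpha)=\mathrm{Tr}(F_\alpha^{-1})$ for irreducible $\alpha$, the precise placement of indices and of $f^\alpha_i$ versus $(f^\alpha_i)^{-1}$ in the two orthogonality relations, and the index transposition produced by the adjoint in $\hat\xi(\alpha)=(\mathrm{id}\otimes\omega_\xi)(U_\alpha)^*$; it is exactly this bookkeeping that makes all the weights cancel.
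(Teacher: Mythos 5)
Your proposal is correct and follows essentially the same route as the paper: density of $\eta_G(A(G))$ plus the Schur orthogonality relations of \cite[Theorem 1.4.2]{NeshveyevTuset}, which show that the weighted matrix coefficients form an orthonormal basis mapped by $\mathcal{F}_G$ onto an orthonormal family spanning $\bigoplus_\alpha B(H_\alpha)$. The paper's proof merely states the resulting Parseval identity and leaves the basis computation and the density/surjectivity step implicit, whereas you carry out this bookkeeping (correctly, and consistently with the stated inner product) in full.
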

\begin{proof}
Remark that $\eta_G(A(G))$ is dense in $L^2(G)$, where $A(G)$ is the $*$-subalgebra of $C(G)$ generated by matrix coefficients of finite dimensional unitary corepresentations. Thus, by the Schur orthogonal relation (see \cite[Theorem 1.4.2]{NeshveyevTuset}), $\{\eta_G(u_{ij}^{\alpha})\mid i,j =1,\dots, \dim H_\alpha, \alpha\in\widehat G\}$ forms an orthogonal basis for $L^2(G)$ and for any $\xi, \eta\in L^2(G)$ we have
\[\langle\xi, \eta\rangle=\sum_{\alpha\in\widehat G}d_q(\alpha)\mathrm{Tr}_{H_\alpha}(F_\alpha\hat\eta(\alpha)^*\hat\xi(\alpha)).\]
\end{proof}

Let $L^\infty(G)$ be the von Neumann algebra generated by $\pi_{h_G}(C(G))$. Recall that $L^\infty(G)_*$ is a Banach algebra and there exists a representation $(\pi_G, L^2(G))$ of $L^\infty(G)_*$ such that $\pi_G(\omega)\eta_G(a)=\eta_G((\mathrm{id}\otimes\omega)(\delta_G(a))$ for any $a\in C(G)$ and $\omega\in L^\infty(G)_*$. Then $W^*(G)$ is the von Neumann algebra generated by $\pi_G(L^\infty(G)_*)$. 

We can show that $\mathcal{F}_G\pi_G(\omega)\mathcal{F}_G^{-1}=L_G((\pi_\alpha(\omega))_{\alpha\in\widehat G})$ for any $\omega\in L^\infty(G)_*$, where $\pi_\alpha(\omega):=(\mathrm{id}\otimes\omega)(U_\alpha)$ and $L_G$ is the left multiplication of $\ell^\infty\mathchar`-\bigoplus_{\alpha\in\widehat G}B(H_\alpha)$ on $\ell^2\mathchar`-\bigoplus_{\alpha\in\widehat G}B(H_\alpha)$. Therefore, there exists a $*$-isomorphism from 
\[\Phi_G\colon W^*(G)\to \ell^\infty\mathchar`-\bigoplus_{\alpha\in\widehat G}B(H_\alpha).\] 

Then we can describe the Kac--Takesaki operator $\hat W_G$ explicitly.
\begin{corollary}\label{cor:hinata}
$V_G=\sum_{\alpha\in\widehat G}(\Phi_G^{-1}\otimes\pi_{h_G})(U_\alpha)$.
\end{corollary}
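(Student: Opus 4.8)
The plan is to identify the two sides of the asserted equality by comparing their slices in the second (that is, $L^\infty(G)$-) leg. Write $U_\alpha=\sum_{i,j}e^\alpha_{ij}\otimes u^\alpha_{ij}$ as above and recall two facts already established: $\pi_G(\omega)=(\mathrm{id}\otimes\omega)(V_G)$ for every $\omega\in L^\infty(G)_*$, and $\Phi_G(\pi_G(\omega))=(\pi_\alpha(\omega))_{\alpha\in\widehat G}$ with $\pi_\alpha(\omega)=(\mathrm{id}\otimes\omega)(U_\alpha)$ (this is the defining property of $\Phi_G$ together with the relation $\mathcal{F}_G\pi_G(\omega)\mathcal{F}_G^{-1}=L_G((\pi_\alpha(\omega))_\alpha)$). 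Since $V_G\in W^*(G)\bar\otimes L^\infty(G)$ and the maps $\mathrm{id}\otimes\omega$, $\omega\in L^\infty(G)_*$, separate the points of $N\bar\otimes L^\infty(G)$ for any von Neumann algebra $N\subseteq B(L^2(G))$, it suffices to show that the right-hand side of the corollary is a well-defined element of $W^*(G)\bar\otimes L^\infty(G)$ whose second-leg slices agree with those of $V_G$.

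First I would verify convergence of the series. Let $p_\alpha\in Z(W^*(G))$ be the central projection corresponding to the summand $B(H_\alpha)$ under $\Phi_G$. Because $e^\alpha_{ij}$ lives in that summand, $(\Phi_G^{-1}\otimes\pi_{h_G})(U_\alpha)=(p_\alpha\otimes1)\,(\Phi_G^{-1}\otimes\pi_{h_G})(U_\alpha)\,(p_\alpha\otimes1)$, and $\|(\Phi_G^{-1}\otimes\pi_{h_G})(U_\alpha)\|\le\|U_\alpha\|=1$ since $\Phi_G^{-1}\otimes\pi_{h_G}$ is a $*$-homomorphism. The projections $p_\alpha\otimes1$ being pairwise orthogonal, for any $\zeta\in L^2(G)\otimes L^2(G)$ and any finite $F\subseteq\widehat G$ we obtain $\bigl\|\sum_{\alpha\in F}(\Phi_G^{-1}\otimes\pi_{h_G})(U_\alpha)\zeta\bigr\|^2=\sum_{\alpha\in F}\bigl\|(\Phi_G^{-1}\otimes\pi_{h_G})(U_\alpha)(p_\alpha\otimes1)\zeta\bigr\|^2\le\sum_{\alpha\in F}\|(p_\alpha\otimes1)\zeta\|^2\le\|\zeta\|^2$, so the net of finite partial sums is strongly Cauchy and converges strongly to an operator $W\in W^*(G)\bar\otimes L^\infty(G)$, the latter algebra being strongly closed.

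Next I would compute the slices of $W$. Slicing term by term, for $\omega\in L^\infty(G)_*$ one has $(\mathrm{id}\otimes\omega)(W)=\sum_\alpha\sum_{i,j}\omega\bigl(\pi_{h_G}(u^\alpha_{ij})\bigr)\,\Phi_G^{-1}(e^\alpha_{ij})$, where $e^\alpha_{ij}$ is viewed inside the $\alpha$-th summand. Applying the $*$-isomorphism $\Phi_G$, the $\beta$-component of $\Phi_G\bigl((\mathrm{id}\otimes\omega)(W)\bigr)$ equals $\sum_{i,j}\omega\bigl(\pi_{h_G}(u^\beta_{ij})\bigr)\,e^\beta_{ij}=(\mathrm{id}\otimes\omega)(U_\beta)=\pi_\beta(\omega)$, and hence $\Phi_G\bigl((\mathrm{id}\otimes\omega)(W)\bigr)=(\pi_\beta(\omega))_\beta=\Phi_G(\pi_G(\omega))=\Phi_G\bigl((\mathrm{id}\otimes\omega)(V_G)\bigr)$. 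Since $\Phi_G$ is injective this gives $(\mathrm{id}\otimes\omega)(W)=(\mathrm{id}\otimes\omega)(V_G)$ for all $\omega\in L^\infty(G)_*$, and therefore $W=V_G$.

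The only point that takes a little care is the convergence of the sum over $\widehat G$ and the right to slice it term by term; the block-orthogonality of the central projections $p_\alpha$ disposes of this at once, and what remains is routine bookkeeping with $\mathcal{F}_G$, $\Phi_G$, and the defining relation of $V_G$. A more computational alternative that sidesteps the slicing formalism is to check the defining identity $V_G(\eta_{h_G}(a)\otimes\xi)=(\pi_{h_G}\otimes\pi_{h_G})(\delta_G(a))(\xi_{h_G}\otimes\xi)$ directly on matrix coefficients $a=u^\beta_{kl}$: expand $\delta_G(u^\beta_{kl})=\sum_m u^\beta_{km}\otimes u^\beta_{ml}$ and use the Schur orthogonality relations (\cite[Theorem 1.4.2]{NeshveyevTuset}) to evaluate the action of $\Phi_G^{-1}(e^\alpha_{ij})$ on $\eta_{h_G}(u^\beta_{kl})$; both computations then reduce to the same identity.
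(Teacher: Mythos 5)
Your argument is correct and is essentially the paper's intended one-line deduction from Theorem \ref{thm:PW}: the identity $\Phi_G(\pi_G(\omega))=\bigl((\mathrm{id}\otimes\omega)(U_\alpha)\bigr)_{\alpha\in\widehat G}$ established in the appendix, compared against $\pi_G(\omega)=(\mathrm{id}\otimes\omega)(V_G)$, is exactly what you exploit via second-leg slices. You merely spell out the details the paper leaves implicit (strong convergence of the sum via the orthogonal central projections $p_\alpha$, and the fact that slice maps separate points of $W^*(G)\bar\otimes L^\infty(G)$), which is fine.
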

\begin{proof}
It immediately follows from Theorem \ref{thm:PW}.
\end{proof}

\section*{Acknowledgment}
The author gratefully acknowledges the useful comments from Sinji Koshida and his supervisor, Professor Yoshimichi Ueda. The author appreciates the comments on the paper \cite{BG} by Professor Alexei Borodin and Professor Vadim Gorin. This work was supported by JSPS Research Fellowship for Young Scientists (KAKENHI Grant Number JP 19J21098).

}


\end{document}